\documentclass[10pt,reqno,a4paper,oneside,11pt]{amsart}%
\usepackage{amsfonts}
\usepackage{amsfonts}
\usepackage{amsfonts}
\usepackage{amsfonts}
\usepackage{amsfonts}
\usepackage{amsfonts}
\usepackage{amsfonts}
\usepackage{amsfonts}
\usepackage{amsfonts}
\usepackage{amsfonts}
\usepackage{amsfonts}
\usepackage{amsfonts}
\usepackage{amsfonts}
\usepackage{amsfonts}
\usepackage{amsfonts}
\usepackage{amsfonts}
\usepackage{amsfonts}
\usepackage{amsfonts}
\usepackage{amsfonts}
\usepackage{amsfonts}
\usepackage{mathrsfs}
\usepackage{mathrsfs}
\usepackage{amsfonts}
\usepackage{amssymb}
\usepackage{amsmath}
\usepackage{amsthm}
\usepackage{graphicx}%
\setcounter{MaxMatrixCols}{30}
\providecommand{\U}[1]{\protect\rule{.1in}{.1in}}
\oddsidemargin 0pt \evensidemargin 0pt \textwidth 160mm
\textheight 230mm
\newtheorem{theorem}{Theorem}[section]
\newtheorem{corollary}[theorem]{Corollary}
\newtheorem{lemma}[theorem]{Lemma}

\newtheorem{example}{Example}

\theoremstyle{definition}
\theoremstyle{remark}
\numberwithin{equation}{section}

\ifx\pdfoutput\relax\let\pdfoutput=\undefined\fi
\newcount\msipdfoutput
\ifx\pdfoutput\undefined\else
\ifcase\pdfoutput\else
\msipdfoutput=1
\ifx\paperwidth\undefined\else
\ifdim\paperheight=0pt\relax\else\pdfpageheight\paperheight\fi
\ifdim\paperwidth=0pt\relax\else\pdfpagewidth\paperwidth\fi
\fi\fi\fi
\begin{document}
\pagestyle{myheadings}

\begin{center}
{\Large \textbf{Systems of four coupled one sided Sylvester-type real quaternion matrix equations and their applications}}\footnote{This research was supported by
the grants from the National Natural
Science Foundation of China (11571220).
\par
{}* Corresponding author. \par  Email address: hzh19871126@126.com (Z.H. He); wqw@t.shu.edu.cn, wqw369@yahoo.com (Q.W. Wang)}

\bigskip

{ \textbf{Zhuo-Heng He$^{a,b}$, Qing-Wen Wang$^{b,*}$}}

{\small
\vspace{0.25cm}

$a.$ Department of Mathematics and Statistics, Auburn University, AL 36849-5310, USA\\

$b.$ Department of Mathematics, Shanghai University, Shanghai 200444, P. R. China }

\end{center}

\begin{quotation}
\noindent\textbf{Abstract:} In this paper, we derive  some necessary and
sufficient solvability conditions for some systems of one sided coupled Sylvester-type real quaternion matrix equations in terms of
ranks and generalized inverses of matrices.
We also give the expressions of the general solutions to these systems when they are
solvable. Moreover, we provide some
numerical examples to illustrate our results. The findings of this
paper extend some known results in the literature.
\newline\noindent\textbf{Keywords:} Quaternion; Sylvester-type equations; Moore-Penrose
inverse; Rank; Solution;  Solvability\newline%
\noindent\textbf{2010 AMS Subject Classifications:\ }{\small 15A03, 15A09,
15A24, 15B33}\newline
\end{quotation}

\section{\textbf{Introduction}}

Quaternions were introduced by Irish mathematician Sir William Rowan Hamilton
in 1843. It is well known that quaternion algebra is an associative and noncommutative division algebra over the real number field.
 Quaternions and quaternion matrices have
found a huge amount of applications in quantum physics, signal and color image processing, and so on (e.g. \cite{N.
LE Bihan}, \cite{S. De Leo}, \cite{Took1}-\cite{Took4}). General properties of quaternions and quaternion matrices can be found in \cite{F. Zhang}. Quaternion matrix equations play
an important role in mathematics and other disciplines, such as engineering, system and control theory. There have been many papers using various approaches to investigate many quaternion matrix equations (e.g. \cite{wanghe2}-\cite{wanghe3}, \cite{QWWangandyushaowen}-\cite{Q.W5}, \cite{yuan4}, \cite{yuan1}, \cite{wangronghao}).

The Sylvester-type matrix equations have wide applications in neural network \cite{zhangyong}, robust control (\cite{mao8}, \cite{Varga}),   output feedback control (\cite{VLS}, \cite{mao10}), the almost noninteracting control by measurement feedback problem (\cite{J.W}, \cite{H.K.Wimmer}), graph theory \cite{Dmytryshyn}, and so on. Since Roth \cite{Roth} first studied the one-sided generalized Sylvester matrix equation
\begin{align*}
AX-YB=C
\end{align*}over the complex field in 1952, there have been many papers to discuss the generalized Sylvester matrix equations (e.g. \cite{jkb}, \cite{JKB2}, \cite{helaa}, \cite{heac2017}, \cite{mao16}, \cite{mao1}, \cite{SangGuLee2}, \cite{mao5}, \cite{wangheauto}-\cite{Q.W12}, \cite{H.K.Wimmer}). For instance, De Ter$\acute{a}$n et al. (\cite{xibanya01}, \cite{xibanyalama}) considered the $\star$-Sylvester equation $AX+X^{*}B=0$ and $AX+BX^{*}=0$. Quite recently, Dmytryshyn and K{\aa}gstr\"{a}m \cite{Dmytryshyn} presented some solvability conditions of the following systems consisting of
Sylvester and $\star$-Sylvester equations through the corresponding equivalence relations of the block matrices
\begin{align*}
  \left\{\begin{array}{c}
A_{i}X_{k}\pm X_{j}B_{i}=C_{i},~i=1,\ldots,n_{1},\\
F_{i^{'}}X_{k^{'}}\pm X_{j^{'}}^{*}G_{i^{'}}=H_{i^{'}},~i^{'}=1,\ldots,n_{2},
\end{array}
  \right.
\end{align*}
where $k,j,k^{'},j^{'}\in\{1,\ldots,m\},$ each unknown $X_{l}$ is $r_{l}\times c_{l},l=1,\ldots,m,$ and all other matrices are of appropriate sizes. Jonsson and K{\aa}gstr\"{a}m (\cite{Jonsson01}, \cite{Jonsson02}) provided some effective approaches for solving one-sided and two-sided triangular Sylvester-type matrix equations.

The study on the coupled generalized Sylvester matrix equations
is active in recent years.  Lee and Vu \cite{SangGuLee} derived a consistency condition for the following system of  mixed Sylvester matrix equations through the corresponding equivalence relations of the block matrices
\begin{align}\label{bufensys02}
A_{1}X_{1}-X_{2}B_{1}=C_{1},\quad A_{2}X_{3}-X_{2}B_{2}=C_{2},
\end{align}where $A_{i},B_{i}$ and $C_{i}(i=1,2)$ are given matrices over a field,  $X_{1},X_{2}$ and $X_{3}$ are unknowns.
Wang and He \cite{wangheauto} gave some computable  necessary and
sufficient solvability conditions for the system (\ref{bufensys02}), and presented the general solution   when (\ref{bufensys02}) is
solvable. Afterwards, He and Wang \cite{shangdaxuebao} provided some necessary and sufficient solvability conditions for the system
of  mixed Sylvester matrix equations
\begin{align}\label{bufensys01}
A_{1}X_{1}-X_{2}B_{1}=C_{1},\quad A_{2}X_{2}-X_{3}B_{2}=C_{2},
\end{align}where $A_{i},B_{i}$ and $C_{i}(i=1,2)$ are given complex matrices,  $X_{1},X_{2}$ and $X_{3}$ are unknowns.
They also derived an expression of the general solution to the system (\ref{bufensys01}). Recently, Wang and He \cite{auto001} considered the following three
systems of generalized coupled Sylvester matrix equations with four variables
\begin{equation}\label{specialsys01}
  \left\{\begin{array}{c}
A_{1}X-YB_{1}=C_{1},\\
A_{2}Z-YB_{2}=C_{2},\\
A_{3}Z-WB_{3}=C_{3},
\end{array}
  \right.
  \end{equation}
\begin{equation}\label{specialsys02}
  \left\{\begin{array}{c}
A_{1}X-YB_{1}=C_{1},\\
A_{2}Y-ZB_{2}=C_{2},\\A_{3}Z-WB_{3}=C_{3},
\end{array}
  \right.
  \end{equation}
\begin{equation}\label{specialsys03}
  \left\{\begin{array}{c}
A_{1}X-YB_{1}=C_{1},\\
A_{2}Y-ZB_{2}=C_{2},\\A_{3}W-ZB_{3}=C_{3},
\end{array}
  \right.
\end{equation}
where $A_{i},B_{i}$ and $C_{i}(i=1,2,3)$ are given complex matrices,  $X,Y,Z$ and $W$ are unknowns. He, Mauricio, Wang and De Moor \cite{helaa} considered two sided coupled generalized Sylvester matrix equations with four variables
\begin{align*}
A_{i}X_{i}B_{i}+C_{i}X_{i+1}D_{i}=E_{i},~i=1,2,3,
\end{align*}where $A_{i},B_{i},C_{i},D_{i},E_{i}, (i=1,2,3)$ are given complex matrices,  $X_{i}$ are unknowns. Very recently, He and Wang \cite{heac2017} derived the solvability conditions and the general solution to the system of the periodic discrete-time coupled Sylvester quaternion matrix equations
\begin{equation*}
  \left\{\begin{array}{c}
A_{k}X_{k}+Y_{k}B_{k}=M_{k},\\
C_{k}X_{k+1}+Y_{k}D_{k}=N_{k},
\end{array}
  \right.(k=1,2),
\end{equation*}
where $A_{k},B_{k},C_{k},D_{k},M_{k},N_{k}$ are given matrices, $X_{k}$ and $Y_{k}$ are unknowns.

To our best knowledge, there has been little information on the solvability and
the general solutions to the systems of four coupled one sided Sylvester-type real quaternion matrix equations with five unknowns.
Motivated by the wide applications of generalized Sylvester matrix equations and real quaternion matrix equations and in order to improve the theoretical
development of generalized Sylvester real quaternion matrix equations, we in this
paper consider  the solvability and the expressions of the general solutions to the following systems of four coupled one sided Sylvester-type real quaternion matrix equations
\begin{align}\label{sys01}
  \left\{\begin{array}{c}
A_{1}X_{1}-X_{2}B_{1}=C_{1},\\
A_{2}X_{3}-X_{2}B_{2}=C_{2},\\
A_{3}X_{3}-X_{4}B_{3}=C_{3},\\
A_{4}X_{4}-X_{5}B_{4}=C_{4},
\end{array}
  \right.
\end{align}
\begin{align}\label{sys02}
  \left\{\begin{array}{c}
A_{1}X_{1}-X_{2}B_{1}=C_{1},\\
A_{2}X_{2}-X_{3}B_{2}=C_{2},\\
A_{3}X_{3}-X_{4}B_{3}=C_{3},\\
A_{4}X_{5}-X_{4}B_{4}=C_{4},
\end{array}
  \right.
\end{align}
\begin{align}\label{sys03}
  \left\{\begin{array}{c}
A_{1}X_{1}-X_{2}B_{1}=C_{1},\\
A_{2}X_{2}-X_{3}B_{2}=C_{2},\\
A_{3}X_{4}-X_{3}B_{3}=C_{3},\\
A_{4}X_{5}-X_{4}B_{4}=C_{4},
\end{array}
  \right.
\end{align}
\begin{align}\label{sys04}
  \left\{\begin{array}{c}
A_{1}X_{1}-X_{2}B_{1}=C_{1},\\
A_{2}X_{2}-X_{3}B_{2}=C_{2},\\
A_{3}X_{3}-X_{4}B_{3}=C_{3},\\
A_{4}X_{4}-X_{5}B_{4}=C_{4},
\end{array}
  \right.
\end{align}
\begin{align}\label{sys05}
  \left\{\begin{array}{c}
A_{1}X_{1}-X_{2}B_{1}=C_{1},\\
A_{2}X_{3}-X_{2}B_{2}=C_{2},\\
A_{3}X_{4}-X_{3}B_{3}=C_{3},\\
A_{4}X_{4}-X_{5}B_{4}=C_{4},
\end{array}
  \right.
\end{align}
where $A_{i},B_{i},C_{i}, (i=1,2,3,4)$ are given real quaternion matrices, and $X_{1},\ldots,X_{5}$ are unknowns. Note that the $i$th equation and $(i+1)$th equation in (\ref{sys01})-(\ref{sys05}) have a common variable $X_{i+1}$. The given real quaternion matrices $A_{i}$ located at the left of the variables and $B_{i}$ located at the right of the variables. Systems (\ref{bufensys02})-(\ref{specialsys03}) are special cases of systems (\ref{sys01})-(\ref{sys05}).

The remainder of the paper is organized as follows. In Section 2, we provide some
known lemmas which are used in this paper. In Section 3,4,5,6,7, we present some solvability
conditions to the systems of four coupled one sided Sylvester-type real quaternion matrix equations (\ref{sys01})-(\ref{sys05}), respectively. We also derive  the general solutions to the systems (\ref{sys01})-(\ref{sys05}), respectively.  Moreover, we give some
numerical examples to illustrate our results.

Throughout this paper, let $\mathbb{R}$   be the real   number fields.
Let $\mathbb{H}^{m\times
n}$ be the set of all $m\times n$ matrices over the real quaternion algebra
\[
\mathbb{H}=\big\{a_{0}+a_{1}\mathbf{i}+a_{2}\mathbf{j}+a_{3}\mathbf{k}\big|~\mathbf{i}^{2}=\mathbf{j}^{2}=\mathbf{k}^{2}%
=\mathbf{ijk}=-1,a_{0},a_{1},a_{2},a_{3}\in\mathbb{R}\big\}.
\]
For $A\in\mathbb{H}^{m\times n}$, the symbols $A^{\ast}$ and $r(A)$ denote the
conjugate transpose and the rank of $A$, respectively. The identity matrix
with appropriate size is denoted by $I$. The Moore-Penrose inverse of
$A\in\mathbb{H}^{m\times n}$, denoted by $A^{\dag}$, is defined to be the
unique solution $X$ to the following four matrix equations
\[
(1)\text{ }AXA=A,\quad(2)\text{ }XAX=X,\quad(3)\text{ }(AX)^{\ast}%
=AX,\quad(4)\text{ }(XA)^{\ast}=XA.
\]
Furthermore, $L_{A}$ and $R_{A}$ stand for the two projectors $L_{A}%
=I-A^{\dag}A$ and $R_{A}=I-AA^{\dag}$ induced by $A$, respectively. It is
known that $L_{A}=L_{A}^{\ast}$ and $R_{A}=R_{A}^{\ast}$.

\section{\textbf{Preliminaries}}

In this section, we review some lemmas which are used in the further development
of this paper. The following lemma give the solvability conditions and general solution to the mixed  Sylvester real quaternion matrix equations (\ref{bufensys02}).

\begin{lemma}\label{lemma01} \cite{wangheauto}
Let $A_{i},B_{i},$ and $C_{i}(i=1,2)$ be given. Set
\begin{align*}
D_{1}=R_{B_{1}}B_{2},A=R_{A_{2}}A_{1},B=B_{2}L_{D_{1}},
C=R_{A_{2}}(R_{A_{1}}C_{1}B_{1}^{\dag}B_{2}-C_{2})L_{D_{1}}.
\end{align*}
Then the following statements are equivalent:\\
$(1)$ The mixed  Sylvester real quaternion matrix equations (\ref{bufensys02}) is consistent.\\
$(2)$
$
R_{A_{1}}C_{1}L_{B_{1}}=0,~R_{A}C=0,~CL_{B}=0.
$\\
$(3)$
\begin{align*}
r\begin{pmatrix}C_{1}&A_{1}\\B_{1}&0\end{pmatrix}=r(A_{1})+r(B_{1}),
\end{align*}
\begin{align*}
r\begin{pmatrix}C_{2}&A_{2}\\B_{2}&0\end{pmatrix}=r(A_{2})+r(B_{2}),
\end{align*}
\begin{align*}
r\begin{pmatrix}B_{2}&B_{1}&0&0\\C_{2}&C_{1}&A_{1}&A_{2}\end{pmatrix}=r(A_{1},~A_{2})
+r(B_{1},~B_{2}).
\end{align*}

In this case, the general solution to the mixed  Sylvester real quaternion matrix equations (\ref{sys01}) can be expressed as
\begin{align*}
X=A_{1}^{\dag}C_{1}+U_{1}B_{1}+L_{A_{1}}W_{1},
\end{align*}
\begin{align*}
Y=-R_{A_{1}}C_{1}B_{1}^{\dag}+A_{1}U_{1}+V_{1}R_{B_{1}},
\end{align*}
\begin{align*}
Z=A_{2}^{\dag}(C_{2}-R_{A_{1}}C_{1}B_{1}^{\dag}B_{2}+A_{1}U_{1}B_{2})+W_{4}D_{1}+L_{A_{2}}W_{6},
\end{align*}
where
\begin{align*}
U_{1}=A^{\dag}CB^{\dag}+L_{A}W_{2}+W_{3}R_{B},
\end{align*}
\begin{align*}
V_{1}=-R_{A_{2}}(C_{2}-R_{A_{1}}C_{1}B_{1}^{\dag}B_{2}+A_{1}U_{1}B_{2})D_{1}^{\dag}+A_{2}W_{4}+W_{5}R_{D_{1}},
\end{align*}
and $W_{1}, \cdots, W_{6}$ are arbitrary matrices over $\mathbb{H}$ with appropriate sizes.
\end{lemma}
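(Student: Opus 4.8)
The plan is to reduce the system (\ref{bufensys02}) to a short chain of elementary generalized Sylvester-type real quaternion matrix equations, to solve each link by the Moore--Penrose machinery, and finally to translate the resulting generalized-inverse conditions into the block-rank equalities of part $(3)$; here $X,Y,Z$ abbreviate $X_{1},X_{2},X_{3}$. First I would solve the first equation $A_{1}X_{1}-X_{2}B_{1}=C_{1}$ alone: it is a generalized Sylvester equation, hence consistent if and only if $R_{A_{1}}C_{1}L_{B_{1}}=0$, in which case its general solution is
\[
X_{1}=A_{1}^{\dag}C_{1}+U_{1}B_{1}+L_{A_{1}}W_{1},\qquad X_{2}=-R_{A_{1}}C_{1}B_{1}^{\dag}+A_{1}U_{1}+V_{1}R_{B_{1}},
\]
with $U_{1},V_{1},W_{1}$ free over $\mathbb{H}$ (a one-line check using $A_{1}L_{A_{1}}=0$ and $R_{B_{1}}B_{1}=0$ gives $A_{1}X_{1}-X_{2}B_{1}=A_{1}A_{1}^{\dag}C_{1}+R_{A_{1}}C_{1}B_{1}^{\dag}B_{1}$, which equals $C_{1}$ exactly when $R_{A_{1}}C_{1}L_{B_{1}}=0$). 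This yields the first rank equality of $(3)$ through the identity $r(R_{A_{1}}C_{1}L_{B_{1}})=r\begin{pmatrix}C_{1}&A_{1}\\B_{1}&0\end{pmatrix}-r(A_{1})-r(B_{1})$.

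Next I would substitute this expression for $X_{2}$ into the second equation $A_{2}X_{3}-X_{2}B_{2}=C_{2}$, which, with $D_{1}=R_{B_{1}}B_{2}$, becomes
\[
A_{2}X_{3}-A_{1}U_{1}B_{2}-V_{1}D_{1}=C_{2}-R_{A_{1}}C_{1}B_{1}^{\dag}B_{2}
\]
in the three unknowns $X_{3},U_{1},V_{1}$. Multiplying this on the left by $R_{A_{2}}$ and on the right by $L_{D_{1}}$ annihilates the $X_{3}$- and $V_{1}$-terms and leaves $(R_{A_{2}}A_{1})\,U_{1}\,(B_{2}L_{D_{1}})=R_{A_{2}}(R_{A_{1}}C_{1}B_{1}^{\dag}B_{2}-C_{2})L_{D_{1}}$, that is $AU_{1}B=C$ with $A,B,C$ as in the statement. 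By the solvability criterion for $AUB=C$ this holds if and only if $R_{A}C=0$ and $CL_{B}=0$, and then $U_{1}=A^{\dag}CB^{\dag}+L_{A}W_{2}+W_{3}R_{B}$. Since $C=-R_{A_{2}}(C_{2}-R_{A_{1}}C_{1}B_{1}^{\dag}B_{2})L_{D_{1}}$, for any such $U_{1}$ the residual equation $A_{2}X_{3}-V_{1}D_{1}=C_{2}-R_{A_{1}}C_{1}B_{1}^{\dag}B_{2}+A_{1}U_{1}B_{2}$ automatically satisfies its own consistency condition $R_{A_{2}}(\cdot)L_{D_{1}}=0$; solving that generalized Sylvester equation for the pair $(X_{3},V_{1})$ then produces
\[
X_{3}=A_{2}^{\dag}\bigl(C_{2}-R_{A_{1}}C_{1}B_{1}^{\dag}B_{2}+A_{1}U_{1}B_{2}\bigr)+W_{4}D_{1}+L_{A_{2}}W_{6},
\]
\[
V_{1}=-R_{A_{2}}\bigl(C_{2}-R_{A_{1}}C_{1}B_{1}^{\dag}B_{2}+A_{1}U_{1}B_{2}\bigr)D_{1}^{\dag}+A_{2}W_{4}+W_{5}R_{D_{1}},
\]
with $W_{4},W_{5},W_{6}$ free. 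Composing the three stages reproduces exactly the displayed general solution, and the implication $(1)\Rightarrow(2)$ follows by running the same reductions on an arbitrary solution; hence $(1)\Leftrightarrow(2)$.

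It remains to establish $(2)\Leftrightarrow(3)$, which is the most computational part and the main obstacle. For this I would apply the quaternion versions of the Marsaglia--Styan rank identities $r(R_{M}N)=r(M,~N)-r(M)$ and $r(NL_{M})=r\begin{pmatrix}M\\N\end{pmatrix}-r(M)$, together with the rank formula for a $2\times2$ block matrix having a zero block, to the data $A=R_{A_{2}}A_{1}$, $B=B_{2}L_{D_{1}}$, $D_{1}=R_{B_{1}}B_{2}$, and $C=R_{A_{2}}(R_{A_{1}}C_{1}B_{1}^{\dag}B_{2}-C_{2})L_{D_{1}}$. Expanding $r(R_{A}C)$ and $r(CL_{B})$ and collapsing the nested projectors by repeated use of these identities and elementary block row/column operations turns ``$R_{A}C=0$ and $CL_{B}=0$'' into the remaining two block-rank equalities of $(3)$, namely $r\begin{pmatrix}C_{2}&A_{2}\\B_{2}&0\end{pmatrix}=r(A_{2})+r(B_{2})$ and $r\begin{pmatrix}B_{2}&B_{1}&0&0\\C_{2}&C_{1}&A_{1}&A_{2}\end{pmatrix}=r(A_{1},~A_{2})+r(B_{1},~B_{2})$. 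No idea beyond these rank identities enters; only bookkeeping.
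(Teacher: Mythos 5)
Your argument is correct and is essentially the same route as the source of this lemma: the paper states Lemma \ref{lemma01} without proof, citing \cite{wangheauto}, and your reduction---solve $A_{1}X_{1}-X_{2}B_{1}=C_{1}$ by the classical Baksalary--Kala parametrization, substitute into the second equation, extract $AU_{1}B=C$ via multiplication by $R_{A_{2}}$ and $L_{D_{1}}$ together with the Penrose criterion, then back-solve the residual $A_{2}X_{3}-V_{1}D_{1}=\cdot$ and convert the generalized-inverse conditions to ranks with the Marsaglia--Styan identities of Lemma \ref{lemma04}---is exactly the stepwise scheme the present paper itself uses in proving Theorem \ref{theorem01}. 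The only portion you leave schematic is the $(2)\Leftrightarrow(3)$ rank bookkeeping, but that is the same type of block-matrix collapsing the paper carries out explicitly in the proof of Theorem \ref{theorem01}, so no essential idea is missing.
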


The solvability conditions and general solution to the mixed  Sylvester real quaternion matrix equations (\ref{bufensys01}) can be found in the
following lemma.

\begin{lemma}\cite{shangdaxuebao}\label{lemma02}
Let $A_{i},B_{i},$ and $C_{i}(i=1,2)$ be given. Set
\begin{align*}
A_{11}=R_{(A_{2}A_{1})}A_{2},~B_{11}=R_{B_{1}}L_{B_{2}},~C_{11}=R_{(A_{2}A_{1})}(A_{2}R_{A_{1}}C_{1}B_{1}^{\dag}+C_{2})L_{B_{2}}.
\end{align*}
Then the following statements are equivalent:\\
$(1)$ The mixed   generalized Sylvester real quaternion matrix equations (\ref{bufensys01}) is consistent.\\
$(2)$
\begin{align*}
R_{A_{1}}C_{1}L_{B_{1}}=0,~R_{A_{11}}C_{11}=0,~C_{11}L_{B_{11}}=0.
\end{align*}
$(3)$
\begin{align*}
r\begin{pmatrix}C_{i}&A_{i}\\B_{i}&0\end{pmatrix}=r(A_{i})+r(B_{i}),~
r\begin{pmatrix}A_{2}A_{1}&A_{2}C_{1}+C_{2}B_{1}\\0&B_{2}B_{1}\end{pmatrix}=r(A_{2}A_{1})
+r(B_{2}B_{1}).
\end{align*}

In this case, the general solution to the mixed generalized   Sylvester real quaternion matrix equations (\ref{sys01}) can be expressed as
\begin{align*}
X_{1}=A_{1}^{\dag}C_{1}+U_{1}B_{1}+L_{A_{1}}W_{1},
\end{align*}
\begin{align*}
X_{2}=-R_{A_{1}}C_{1}B_{1}^{\dag}+A_{1}U_{1}+V_{1}R_{B_{1}},
\end{align*}
\begin{align*}
X_{3}=-R_{(A_{2}A_{1})}(C_{2}+A_{2}R_{A_{1}}C_{1}B_{1}^{\dag}-A_{2}V_{1}R_{B_{1}})B_{2}^{\dag}+A_{2}A_{1}W_{4}+W_{5}R_{B_{2}},
\end{align*}
where
\begin{align*}
V_{1}=A_{11}^{\dag}C_{11}B_{11}^{\dag}+L_{A_{11}}W_{2}+W_{3}R_{B_{11}},
\end{align*}
\begin{align*}
U_{1}=(A_{2}A_{1})^{\dag}(C_{2}+A_{2}R_{A_{1}}C_{1}B_{1}^{\dag}-A_{2}V_{1}R_{B_{1}})+W_{4}B_{2}+L_{(A_{2}A_{1})}W_{6},
\end{align*}
and $W_{1},\cdots,W_{6}$ are arbitrary matrices over $\mathbb{H}$ with appropriate sizes.
\end{lemma}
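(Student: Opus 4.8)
The plan is to peel the two equations apart one at a time, reducing the coupled pair to a single Sylvester-type matrix equation of the form $AXB=C$, and then to convert the resulting Moore--Penrose conditions into the rank equalities of statement~(3). First I would solve the leading equation $A_{1}X_{1}-X_{2}B_{1}=C_{1}$ on its own: it is the classical generalized Sylvester equation, consistent iff $R_{A_{1}}C_{1}L_{B_{1}}=0$, and in that case its general solution is $X_{1}=A_{1}^{\dagger}C_{1}+U_{1}B_{1}+L_{A_{1}}W_{1}$, $X_{2}=-R_{A_{1}}C_{1}B_{1}^{\dagger}+A_{1}U_{1}+V_{1}R_{B_{1}}$, with $U_{1},V_{1},W_{1}$ free --- the same parametrization used for the leading equation in Lemma~\ref{lemma01}. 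Substituting this $X_{2}$ into $A_{2}X_{2}-X_{3}B_{2}=C_{2}$ and rearranging gives
\begin{align*}
A_{2}A_{1}U_{1}-X_{3}B_{2}=\bigl(C_{2}+A_{2}R_{A_{1}}C_{1}B_{1}^{\dagger}\bigr)-A_{2}V_{1}R_{B_{1}},
\end{align*}
an equation in $U_{1},X_{3}$ still carrying the free matrix $V_{1}$ on the right.

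For fixed $V_{1}$ this is again a generalized Sylvester equation with coefficients $A_{2}A_{1}$ and $B_{2}$, hence consistent iff $R_{(A_{2}A_{1})}\bigl(C_{2}+A_{2}R_{A_{1}}C_{1}B_{1}^{\dagger}-A_{2}V_{1}R_{B_{1}}\bigr)L_{B_{2}}=0$; spelling this out, a suitable $V_{1}$ exists iff the equation $A_{11}V_{1}B_{11}=C_{11}$ is solvable, with $A_{11},B_{11},C_{11}$ exactly as defined in the statement. By the classical solvability criterion for $A_{11}V_{1}B_{11}=C_{11}$, this holds iff $R_{A_{11}}C_{11}=0$ and $C_{11}L_{B_{11}}=0$, with general solution $V_{1}=A_{11}^{\dagger}C_{11}B_{11}^{\dagger}+L_{A_{11}}W_{2}+W_{3}R_{B_{11}}$. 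Feeding this $V_{1}$ back into the (now consistent) equation for $U_{1},X_{3}$ produces the stated formulas for $U_{1}$ and $X_{3}$, hence for $X_{1},X_{2},X_{3}$; a direct substitution verifies these expressions solve the system, and since every step was an exhaustive parametrization, all solutions arise this way. Together with $R_{A_{1}}C_{1}L_{B_{1}}=0$ this yields the equivalence of (1) and (2) and the general-solution formula.

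It remains to show (2)$\Leftrightarrow$(3), which is a pure rank computation over $\mathbb{H}$, carried out with the identities $r(R_{A}B)=r(A,\,B)-r(A)$ and $r(BL_{A})=r\bigl(\begin{smallmatrix}B\\ A\end{smallmatrix}\bigr)-r(A)$ --- valid over $\mathbb{H}$ because the Moore--Penrose inverse behaves there exactly as over $\mathbb{C}$. The conditions $R_{A_{i}}C_{i}L_{B_{i}}=0$ translate directly into $r\bigl(\begin{smallmatrix}C_{i}&A_{i}\\ B_{i}&0\end{smallmatrix}\bigr)=r(A_{i})+r(B_{i})$, with the caveat that the $i=2$ instance is not assumed in (2) and must be recovered. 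Substituting $A_{11}=R_{(A_{2}A_{1})}A_{2}$, $B_{11}=R_{B_{1}}L_{B_{2}}$ and the formula for $C_{11}$ into $R_{A_{11}}C_{11}=0$ and $C_{11}L_{B_{11}}=0$ and applying block-rank reduction formulas of Marsaglia--Styan type repeatedly, I would rewrite $r(A_{11},\,C_{11})-r(A_{11})$ and $r\bigl(\begin{smallmatrix}C_{11}\\ B_{11}\end{smallmatrix}\bigr)-r(B_{11})$ in terms of ranks of block matrices built only from the $A_{i},B_{i},C_{i}$; invoking $R_{A_{1}}C_{1}L_{B_{1}}=0$ along the way, these collapse, one producing the $i=2$ rank identity and the other the coupled identity $r\bigl(\begin{smallmatrix}A_{2}A_{1}&A_{2}C_{1}+C_{2}B_{1}\\ 0&B_{2}B_{1}\end{smallmatrix}\bigr)=r(A_{2}A_{1})+r(B_{2}B_{1})$. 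The main obstacle is exactly this last step: organizing the block-rank eliminations so that the two "mixed" conditions split cleanly into the compact coupled rank equality plus the solvability of the second equation, while bookkeeping the factors $A_{1}^{\dagger},B_{1}^{\dagger}$ and the projectors and checking that noncommutativity of $\mathbb{H}$ never breaks a classical rank identity being invoked.
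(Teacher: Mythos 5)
Your proposal is correct and follows essentially the approach this paper relies on: the paper itself only cites Lemma \ref{lemma02} from \cite{shangdaxuebao} without reproducing a proof, but your reduction --- parametrize $A_{1}X_{1}-X_{2}B_{1}=C_{1}$ in the standard Baksalary--Kala form, substitute $X_{2}$ into the second equation so that its consistency for some $V_{1}$ becomes exactly $A_{11}V_{1}B_{11}=C_{11}$, and then convert the Moore--Penrose conditions into rank equalities via the formulas of Lemma \ref{lemma04} --- is precisely the technique used for Lemma \ref{lemma01}, Lemma \ref{lemma05} and Theorem \ref{theorem01}. The only remark worth adding is that the direction $(2)\Rightarrow(3)$ needs none of the block-rank bookkeeping you flag as the main obstacle, since consistency of the system immediately yields consistency of each single equation and of $A_{2}A_{1}X-Y(B_{2}B_{1})=A_{2}C_{1}+C_{2}B_{1}$ (left-multiply the first equation by $A_{2}$, right-multiply the second by $B_{1}$ and add), so the Marsaglia--Styan reductions are only required for $(3)\Rightarrow(2)$, exactly as in the rank computations carried out in the proof of Theorem \ref{theorem01}.
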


Based on Lemma \ref{lemma01}, we can solve the following mixed  Sylvester real quaternion matrix equations
\begin{align}\label{bufensys03}
A_{1}X_{1}-X_{2}B_{1}=C_{1},\quad A_{2}X_{1}-X_{3}B_{2}=C_{2}.
\end{align}

\begin{lemma}\label{lemma05}
Let $A_{i},B_{i},$ and $C_{i}(i=1,2)$ be given. Set
\begin{align*}
A_{11}=R_{(A_{2}L_{A_{1}})}A_{2},~B_{11}=B_{1}L_{B_{2}},~C_{11}=R_{(A_{2}L_{A_{1}})}(C_{2}-A_{2}A_{1}^{\dag}C_{1})L_{B_{2}},
\end{align*}
Then the following statements are equivalent:\\
$(1)$ The mixed  Sylvester real quaternion matrix equations (\ref{bufensys03}) is consistent.\\
$(2)$
$
R_{A_{1}}C_{1}L_{B_{1}}=0,~R_{A_{11}}C_{11}=0,~C_{11}L_{B_{11}}=0.
$\\
$(3)$
\begin{align*}
r\begin{pmatrix}C_{1}&A_{1}\\B_{1}&0\end{pmatrix}=r(A_{1})+r(B_{1}),
\end{align*}
\begin{align*}
r\begin{pmatrix}C_{2}&A_{2}\\B_{2}&0\end{pmatrix}=r(A_{2})+r(B_{2}),
\end{align*}
\begin{align*}
r\begin{pmatrix}C_{1}&A_{1}\\C_{2}&A_{2}\\B_{1}&0\\B_{2}&0\end{pmatrix}=r\begin{pmatrix}A_{1}\\A_{2}\end{pmatrix}+r\begin{pmatrix}B_{1}\\B_{2}\end{pmatrix}.
\end{align*}

In this case, the general solution to the mixed  Sylvester real quaternion matrix equations (\ref{bufensys03}) can be expressed as
\begin{align*}
X_{1}=A_{1}^{\dag}C_{1}+U_{1}B_{1}+L_{A_{1}}U_{2},
\end{align*}
\begin{align*}
X_{2}=-R_{A_{1}}C_{1}B_{1}^{\dag}+A_{1}U_{1}+W_{6}R_{B_{1}},
\end{align*}
\begin{align*}
X_{3}=-R_{(A_{2}L_{A_{1}})}(C_{2}-A_{2}A_{1}^{\dag}C_{1}-A_{2}U_{1}B_{1})B_{2}^{\dag}+A_{2}L_{A_{1}}W_{1}+W_{3}R_{B_{2}},
\end{align*}
where
\begin{align*}
U_{1}=A_{11}^{\dag}C_{11}B_{11}^{\dag}+L_{A_{11}}W_{4}+W_{5}R_{B_{11}},
\end{align*}
\begin{align*}
U_{2}=(A_{2}L_{A_{1}})^{\dag}(C_{2}-A_{2}A_{1}^{\dag}C_{1}-A_{2}U_{1}B_{1})+W_{1}B_{2}+L_{(A_{2}L_{A_{1}})}W_{2},
\end{align*}
and $W_{1}, \cdots, W_{6}$ are arbitrary matrices over $\mathbb{H}$ with appropriate sizes.
\end{lemma}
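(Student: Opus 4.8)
The plan is to reduce the mixed system (\ref{bufensys03}) to the already-solved system (\ref{bufensys02}) by conjugate transposition. Taking $\ast$ of both equations in (\ref{bufensys03}) and multiplying by $-1$ yields
\begin{align*}
B_{1}^{\ast}X_{2}^{\ast}-X_{1}^{\ast}A_{1}^{\ast}=-C_{1}^{\ast},\qquad
B_{2}^{\ast}X_{3}^{\ast}-X_{1}^{\ast}A_{2}^{\ast}=-C_{2}^{\ast},
\end{align*}
which is precisely an instance of (\ref{bufensys02}) in which $(A_{1},B_{1},C_{1},A_{2},B_{2},C_{2})$ is replaced by $(B_{1}^{\ast},A_{1}^{\ast},-C_{1}^{\ast},B_{2}^{\ast},A_{2}^{\ast},-C_{2}^{\ast})$, the shared unknown is $X_{1}^{\ast}$, and the two remaining unknowns are $X_{2}^{\ast},X_{3}^{\ast}$. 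Thus every assertion should follow from Lemma~\ref{lemma01} after translating back through $\ast$; throughout I will use $(M^{\ast})^{\dag}=(M^{\dag})^{\ast}$, $R_{M^{\ast}}=L_{M}$, $L_{M^{\ast}}=R_{M}$, and the fact that conjugate transposition, permutations of block rows/columns, and sign changes leave ranks unchanged.

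First I would translate part $(3)$. Applying the substitution to the three rank identities of Lemma~\ref{lemma01}$(3)$ and then conjugate-transposing the block matrices (reordering blocks and flipping signs as needed) produces exactly the three displayed rank equalities of the present statement; this establishes $(1)\Leftrightarrow(3)$. Next I would evaluate the auxiliary matrices of Lemma~\ref{lemma01} on the substituted data: one checks that $R_{A_{2}}A_{1}$ becomes $L_{B_{2}}B_{1}^{\ast}$ (whose conjugate transpose is $B_{11}$), that $B_{2}L_{D_{1}}$ becomes $A_{2}^{\ast}R_{(A_{2}L_{A_{1}})}$ (whose conjugate transpose is $A_{11}$), and that the matrix $C$ defined in Lemma~\ref{lemma01} becomes, after conjugate transposition, $R_{(A_{2}L_{A_{1}})}(C_{2}-A_{2}A_{1}^{\dag}C_{1}L_{B_{1}})L_{B_{2}}$, which differs from $C_{11}$ only by the term $A_{11}A_{1}^{\dag}C_{1}B_{1}^{\dag}B_{11}$; since $R_{A_{11}}$ annihilates this difference from the left and $L_{B_{11}}$ annihilates it from the right, condition $(2)$ of Lemma~\ref{lemma01} translates into condition $(2)$ of the present lemma, giving $(1)\Leftrightarrow(2)$.

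It remains to produce the general solution. Transposing the formulas of Lemma~\ref{lemma01} for $X,Y,Z$ (together with its inner parameters $U_{1},V_{1}$) gives a valid general solution of (\ref{bufensys03}); to bring it to the displayed normal form one absorbs the constant $A_{1}^{\dag}C_{1}B_{1}^{\dag}$ into the parameter $U_{1}$ (this is exactly what turns $-C_{1}B_{1}^{\dag}$ into $-R_{A_{1}}C_{1}B_{1}^{\dag}$ and what shifts the right-hand side of the inner reduced equation so that it becomes $A_{11}U_{1}B_{11}=C_{11}$), and one merges the homogeneous terms of the shape $L_{(A_{2}L_{A_{1}})}(\cdot)B_{2}$ and $A_{2}L_{A_{1}}(\cdot)$ into renamed free matrices $W_{1},\dots,W_{6}$. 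A fully self-contained alternative that avoids conjugation is to solve $A_{1}X_{1}-X_{2}B_{1}=C_{1}$ first, substitute $X_{1}$ into the second equation to get $A_{2}L_{A_{1}}U_{2}+A_{2}U_{1}B_{1}-X_{3}B_{2}=C_{2}-A_{2}A_{1}^{\dag}C_{1}$, eliminate $U_{2}$ (solvability $\Leftrightarrow A_{11}U_{1}B_{1}-R_{(A_{2}L_{A_{1}})}X_{3}B_{2}=R_{(A_{2}L_{A_{1}})}(C_{2}-A_{2}A_{1}^{\dag}C_{1})$), eliminate $X_{3}$ (solvability $\Leftrightarrow A_{11}U_{1}B_{11}=C_{11}$), and finally solve $A_{11}U_{1}B_{11}=C_{11}$ by the standard criterion, back-substituting.

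I expect the main difficulty to be the equivalence $(2)\Leftrightarrow(3)$ when it is carried out directly (i.e.\ not merely inherited from Lemma~\ref{lemma01} via the chain $(2)\Leftrightarrow(1)\Leftrightarrow(3)$): one must expand $A_{11}$, $B_{11}$, $C_{11}$ and manipulate the resulting $2\times2$ and $4\times2$ block matrices, using the rank identity $r\begin{pmatrix}C&A\\B&0\end{pmatrix}=r(A)+r(B)+r(R_{A}CL_{B})$ and its two-block analogues (or the rank lemmas recalled in Section~2), to show that the single rank equality on the full $4\times2$ block matrix encodes exactly $R_{A_{11}}C_{11}=0$ and $C_{11}L_{B_{11}}=0$ once the two individual $2\times2$ ones are granted. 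A secondary, purely clerical obstacle is the parameter bookkeeping in the back-substitution: keeping track of which free matrix is shared by which $X_{i}$ and verifying that, after the absorptions above, one lands precisely on the six displayed expressions.
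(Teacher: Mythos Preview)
Your proposal is correct and is essentially the approach the paper has in mind: the paper states Lemma~\ref{lemma05} with only the remark ``Based on Lemma~\ref{lemma01}'' and no further proof, and your conjugate-transposition reduction is precisely the natural way to make that remark rigorous. Your verification that the auxiliary matrices of Lemma~\ref{lemma01} specialize to $A_{11}^{\ast},B_{11}^{\ast}$ and to a matrix differing from $C_{11}^{\ast}$ by a term of the form $\pm A_{11}(\cdot)B_{11}$ (hence invisible to the conditions $R_{A_{11}}C_{11}=0$ and $C_{11}L_{B_{11}}=0$) is the only nontrivial point, and you have handled it; the alternative elimination argument you sketch is also valid and leads to the same displayed general solution.
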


The following real quaternion matrix equation
\begin{align}\label{equ28}
A_{1}X_{1}+X_{2}B_{1}+C_{3}X_{3}D_{3}+C_{4}X_{4}D_{4}=E_{1}
\end{align}
which play an important role in the construction of the solvability conditions and the general solution to the systems (\ref{sys01})-(\ref{sys05}).

\begin{lemma}\label{lemma03}\cite{hewang4}, \cite{wanghe4444444}
Let $A_{1},B_{1},C_{3},D_{3},C_{4},D_{4}$, and $E_{1}$ be given. Set
\begin{align*}
A  &  =R_{A_{1}}C_{3},B=D_{3}L_{B_{1}},C=R_{A_{1}}C_{4},D=D_{4}L_{B_{1}},\\
E  &  =R_{A_{1}}E_{1}L_{B_{1}},M=R_{A}C,N=DL_{B},S=CL_{M}.
\end{align*}
Then the equation (\ref{equ28}) is consistent if and only if
$$ R_{M}R_{A}E=0,  EL_{B}L_{N}=0, R_{A}EL_{D}=0,  R_{C}EL_{B}=0. $$

In this case, the general solution can be
expressed as\newline%
\begin{align*}
 X_{1}= A_{1}^{\dag}(E_{1}-C_{3}X_{3}D_{3}-C_{4}X_{4}D_{4})-A_{1}^{\dag}%
T_{7}B_{1}+L_{A_{1}}T_{6},
\end{align*}
\begin{align*}
 X_{2}= R_{A_{1}}(E_{1}-C_{3}X_{3}D_{3}-C_{4}X_{4}D_{4})B_{1}^{\dag}%
+A_{1}A_{1}^{\dag}T_{7}+T_{8}R_{B_{1}},
\end{align*}
\begin{align*}
X_{3}=
A^{\dag}EB^{\dag}-A^{\dag}CM^{\dag}EB^{\dag}-A^{\dag}SC^{\dag
}EN^{\dag}DB^{\dag}-A^{\dag}ST_{2}R_{N}DB^{\dag}+L_{A}T_{4}+T_{5}%
R_{B},
\end{align*}
\begin{align*}
 X_{4}= M^{\dag}ED^{\dag}+S^{\dag}SC^{\dag}EN^{\dag}+L_{M}L_{S}T_{1}+L_{M}T_{2}R_{N}+T_{3}R_{D},
\end{align*}
where $T_{1},\ldots,T_{8}$ are arbitrary matrices over $\mathbb{H}$
with appropriate sizes.
\end{lemma}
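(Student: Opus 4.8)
The plan is to reduce the four-term equation \eqref{equ28} to a single two-term Sylvester-like equation by absorbing the action of $A_{1}$ and $B_{1}$, and then to apply the known theory for the generalized Sylvester equation $CXD + C'X'D' = E$ (equivalently, for the system $AX=C$, $XB=D$ type reductions). First I would observe that for fixed $X_{3},X_{4}$ the equation $A_{1}X_{1}+X_{2}B_{1}=E_{1}-C_{3}X_{3}D_{3}-C_{4}X_{4}D_{4}$ in the unknowns $X_{1},X_{2}$ is the classical equation $A_{1}X_{1}+X_{2}B_{1}=F$, which is solvable if and only if $R_{A_{1}}FL_{B_{1}}=0$, with general solution $X_{1}=A_{1}^{\dag}F-A_{1}^{\dag}T_{7}B_{1}+L_{A_{1}}T_{6}$ and $X_{2}=R_{A_{1}}FB_{1}^{\dag}+A_{1}A_{1}^{\dag}T_{7}+T_{8}R_{B_{1}}$, for free $T_{6},T_{7},T_{8}$. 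This is exactly the displayed form for $X_{1},X_{2}$ once one substitutes $F=E_{1}-C_{3}X_{3}D_{3}-C_{4}X_{4}D_{4}$.

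The substitution reduces \eqref{equ28} to the consistency requirement $R_{A_{1}}(E_{1}-C_{3}X_{3}D_{3}-C_{4}X_{4}D_{4})L_{B_{1}}=0$, i.e.
\begin{align*}
R_{A_{1}}C_{3}X_{3}D_{3}L_{B_{1}} + R_{A_{1}}C_{4}X_{4}D_{4}L_{B_{1}} = R_{A_{1}}E_{1}L_{B_{1}},
\end{align*}
which in the notation of the lemma is precisely $AX_{3}B + CX_{4}D = E$. The next step is to solve this generalized Sylvester equation in $X_{3},X_{4}$. The standard device is to first treat $AX_{3}B = E - CX_{4}D$: this is consistent (for fixed $X_{4}$) iff $R_{A}(E-CX_{4}D)L_{B}=0$ with solution $X_{3}=A^{\dag}(E-CX_{4}D)B^{\dag}+L_{A}T_{4}+T_{5}R_{B}$, reducing the problem to $R_{A}CX_{4}DL_{B}=R_{A}EL_{B}$, i.e. $MX_{4}N = R_{A}EL_{B}$ after setting $M=R_{A}C$, $N=DL_{B}$. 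One must be careful here: $R_{A}EL_{B}$ is not literally $E$, but the absorption identities $R_{A}E = R_{A}(R_{A_{1}}E_{1}L_{B_{1}})$ and the idempotency/self-adjointness of the projectors let one rewrite everything consistently; this bookkeeping with nested projectors is where I expect the main technical friction. Solving $MX_{4}N=R_{A}EL_{B}$ by the two-sided formula, and then back-substituting into the expressions for $X_{3}$, $X_{1}$, $X_{2}$, and collecting the free parameters, should yield the stated forms for $X_{3},X_{4}$ (with $S=CL_{M}$ governing the compatibility between the $L_{A}T_{4}$ freedom in $X_{3}$ and the inhomogeneous part of $X_{4}$), and the four scalar conditions $R_{M}R_{A}E=0$, $EL_{B}L_{N}=0$, $R_{A}EL_{D}=0$, $R_{C}EL_{B}=0$ are exactly the accumulated solvability constraints from each stage.

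The cleanest route, and the one I would actually follow, is to not re-derive this from scratch but to invoke the known solvability theorem for the equation $A_{1}X_{1}+X_{2}B_{1}+C_{3}X_{3}D_{3}+C_{4}X_{4}D_{4}=E_{1}$ as established in the cited references \cite{hewang4}, \cite{wanghe4444444}; the substance of the proof is the verification that the conditions and general-solution formulas quoted there match the display above, which is a matter of tracking the auxiliary matrices $A,B,C,D,E,M,N,S$ through the block-elimination argument sketched above. The one genuine obstacle is verifying that the four listed rank/projector conditions are \emph{jointly} equivalent to consistency — i.e., that no hidden compatibility condition is lost when one sequentially peels off $X_{1},X_{2}$, then $X_{3}$, then $X_{4}$ — and this is handled by the standard lemma that $PYQ=R$ is solvable iff $R_{P}R=0$ and $RL_{Q}=0$, applied at each of the three elimination stages and combined using $L_{A}L_{L_{M}} \leftrightarrow$ the $S$-term. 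With that in hand, the formulas for $X_{1},\dots,X_{4}$ are obtained simply by unwinding the substitutions, and $T_{1},\dots,T_{8}$ are the concatenation of the free parameters introduced at each stage.
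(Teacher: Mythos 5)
The first thing to note is that the paper does not prove this lemma at all: it is quoted verbatim from \cite{hewang4} and \cite{wanghe4444444} as a preliminary, so your final fallback (invoke the cited references and check that the auxiliary matrices $A,B,C,D,E,M,N,S$ match) is exactly the paper's treatment, and your first reduction step is fine — eliminating $X_{1},X_{2}$ via the Baksalary--Kala result for $A_{1}X_{1}+X_{2}B_{1}=F$ correctly turns (\ref{equ28}) into $AX_{3}B+CX_{4}D=E$ with the lemma's notation.

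However, your sketch of how to finish from there contains a genuine error. For the \emph{two-sided} equation $AX_{3}B=E-CX_{4}D$ in the single unknown $X_{3}$, consistency is \emph{not} $R_{A}(E-CX_{4}D)L_{B}=0$; that is the criterion for the one-sided equation $AX+YB=F$ with two unknowns. The correct criterion is the pair $R_{A}(E-CX_{4}D)=0$ and $(E-CX_{4}D)L_{B}=0$, i.e. after elimination of $X_{3}$ you are left with the \emph{system} $(R_{A}C)X_{4}D=R_{A}E$ and $CX_{4}(DL_{B})=EL_{B}$ in $X_{4}$, not with the single equation $MX_{4}N=R_{A}EL_{B}$. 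This matters: your reduction could only ever produce weakened conditions such as $R_{M}R_{A}EL_{B}=0$, and it cannot generate $R_{A}EL_{D}=0$ or $R_{C}EL_{B}=0$ at all, whereas these two are among the four stated solvability conditions. Handling the residual pair of equations in $X_{4}$ simultaneously is precisely where $S=CL_{M}$ enters, where the solution formula $X_{4}=M^{\dag}ED^{\dag}+S^{\dag}SC^{\dag}EN^{\dag}+L_{M}L_{S}T_{1}+L_{M}T_{2}R_{N}+T_{3}R_{D}$ gets its shape, and where the two extra conditions arise; this is the substantive content of the cited result, and your sketch passes over it with the remark about ``bookkeeping with nested projectors.'' So either repair the middle stage by solving the coupled pair in $X_{4}$ explicitly, or simply cite \cite{hewang4}, \cite{wanghe4444444} as the paper does, but do not present the sequential peeling with the criterion $R_{A}FL_{B}=0$ as a proof.
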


The following lemma can be easily generalized to $\mathbb{H}$.

\begin{lemma}\label{lemma04}\cite{GPH}
 Let $A\in\mathbb{H}^{m\times n},B\in
\mathbb{H}^{m\times k},C\in\mathbb{H}^{l\times
n},D\in\mathbb{H}^{m\times p},E\in\mathbb{H}^{q\times
n},Q\in\mathbb{H}^{m_{1}\times k}$, and $P\in\mathbb{H}^{l\times
n_{1}}$ be given. Then\newline
$(1)~ r(A)+r(R_{A}B)=r(B)+r(R_{B}A)=r(A,~B).$\newline
$ (2)~ r(A)+r(CL_{A})=r(C)+r(AL_{C})=r%
\begin{pmatrix}
A\\
C
\end{pmatrix}
.$
\end{lemma}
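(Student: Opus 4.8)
The plan is to obtain both identities purely from the four Penrose equations defining $A^{\dag}$, which over the noncommutative division algebra $\mathbb{H}$ behave exactly as over $\mathbb{C}$ (see \cite{F. Zhang}); since only $A,B,C$ appear in $(1)$ and $(2)$, the remaining data $D,E,Q,P$ plays no role. These are the quaternion analogues of the classical Marsaglia--Styan rank formulas.

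For $(1)$, I would first perform an invertible block-column operation. Because $\begin{pmatrix}I&-A^{\dag}B\\0&I\end{pmatrix}$ is invertible,
\[
(A,~B)\begin{pmatrix}I&-A^{\dag}B\\0&I\end{pmatrix}=(A,~B-AA^{\dag}B)=(A,~R_{A}B),
\]
so $r(A,~B)=r(A,~R_{A}B)$. Next I would left-multiply $(A,~R_{A}B)$ by $\begin{pmatrix}A^{\dag}\\R_{A}\end{pmatrix}$; using $R_{A}A=0$ and $A^{\dag}R_{A}=A^{\dag}-A^{\dag}AA^{\dag}=0$ this produces the block-diagonal matrix $\begin{pmatrix}A^{\dag}A&0\\0&R_{A}B\end{pmatrix}$, whose rank is $r(A^{\dag}A)+r(R_{A}B)=r(A)+r(R_{A}B)$. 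Hence $r(A)+r(R_{A}B)\le r(A,~R_{A}B)=r(A,~B)$, while the opposite inequality $r(A,~B)=r(A,~R_{A}B)\le r(A)+r(R_{A}B)$ is immediate, so $r(A,~B)=r(A)+r(R_{A}B)$. Interchanging the roles of $A$ and $B$ and using $r(A,~B)=r(B,~A)$ then gives $r(A,~B)=r(B)+r(R_{B}A)$.

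For $(2)$, I would dualize by conjugate transposition. From $(A^{\ast})^{\dag}=(A^{\dag})^{\ast}$ and the Penrose equations one has $R_{A^{\ast}}=I-A^{\ast}(A^{\ast})^{\dag}=I-A^{\dag}A=L_{A}$, and since $L_{A}=L_{A}^{\ast}$ this yields $R_{A^{\ast}}C^{\ast}=L_{A}C^{\ast}=(CL_{A})^{\ast}$. Because $r(X)=r(X^{\ast})$ for every quaternion matrix, applying $(1)$ to the pair $(A^{\ast},C^{\ast})$ gives
\[
r\begin{pmatrix}A\\C\end{pmatrix}=r(A^{\ast},~C^{\ast})=r(A^{\ast})+r(R_{A^{\ast}}C^{\ast})=r(A)+r(CL_{A}),
\]
and by the same argument with $A$ and $C$ interchanged, $r\begin{pmatrix}A\\C\end{pmatrix}=r(C)+r(AL_{C})$.

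Every step above is an identity that follows formally from the Penrose equations together with the elementary facts $r(A^{\dag}A)=r(A)$, $r(X^{\ast})=r(X)$, and $(A^{\ast})^{\dag}=(A^{\dag})^{\ast}$, so the argument is valid over $\mathbb{H}$ word for word. Accordingly, the only point requiring attention---rather than a genuine obstacle---is confirming that these standard properties of the Moore--Penrose inverse and of matrix rank do carry over to quaternion matrices; granting that, the lemma is essentially bookkeeping with generalized-inverse identities.
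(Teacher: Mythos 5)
Your proof is correct. Note that the paper offers no proof of Lemma \ref{lemma04}: it is quoted from Marsaglia and Styan \cite{GPH} with only the remark that the complex result ``can be easily generalized to $\mathbb{H}$'', so what you have written is precisely the verification the authors leave to the reader, and it goes through. Your two steps --- the invertible column operation $(A,~B)\mapsto(A,~R_{A}B)$ followed by left multiplication with $\begin{pmatrix}A^{\dag}\\ R_{A}\end{pmatrix}$, which kills the off-diagonal blocks because $R_{A}A=0$ and $A^{\dag}R_{A}=0$, and then the dualization of $(1)$ into $(2)$ by conjugate transposition --- are the standard generalized-inverse route to these formulas, and every ingredient you invoke ($r(A^{\dag}A)=r(A)$, additivity of rank on block-diagonal matrices, invariance under invertible factors, subadditivity over concatenation, $(A^{\ast})^{\dag}=(A^{\dag})^{\ast}$) is valid for the column-space rank over the division ring $\mathbb{H}$. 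The one genuinely quaternionic subtlety is the one you handled correctly: the dualization must use the conjugate transpose, since over $\mathbb{H}$ one has $r(X^{\ast})=r(X)$ while $r(X^{T})$ need not equal $r(X)$; your identity $R_{A^{\ast}}=I-A^{\ast}(A^{\ast})^{\dag}=I-(A^{\dag}A)^{\ast}=L_{A}$, resting on Penrose equation (4), is exactly where this enters. Your remark that $D,E,Q,P$ play no role is also accurate; they are vestiges of the fuller statement in \cite{GPH}.
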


\section{\textbf{Some solvability conditions and the general solution to
system (\ref{sys01})}}
In this section, we consider the solvability conditions and the general solution to the
system of one-sided coupled Sylvester-type real quaternion matrix equations (\ref{sys01}). For simplicity, put
\begin{align}
A_{11}=R_{B_{2}}B_{1},~B_{11}=R_{A_{1}}A_{2},~C_{11}=B_{1}L_{A_{11}},~D_{11}=R_{A_{1}}(R_{A_{2}}C_{2}B_{2}^{\dag}B_{1}-C_{1})L_{A_{11}},
\end{align}
\begin{align}
A_{22}=R_{(A_{4}A_{3})}A_{4},B_{22}=R_{B_{3}}L_{B_{4}},C_{22}=R_{(A_{4}A_{3})}(A_{4}R_{A_{3}}C_{3}B_{3}^{\dag}+C_{4})L_{B_{4}},
\end{align}
\begin{align}
A_{33}=(L_{A_{2}},~-L_{A_{3}}),B_{33}=\begin{pmatrix}R_{C_{11}}B_{2}\\-B_{4}B_{3}\end{pmatrix},A_{44}=-L_{(A_{4}A_{3})},
\end{align}
\begin{align}
E_{1}=A_{3}^{\dag}C_{3}+(A_{4}A_{3})^{\dag}(C_{4}B_{3}+A_{4}R_{A_{3}}C_{3})-A_{2}^{\dag}C_{2}-B_{11}^{\dag}D_{11}C_{11}^{\dag}B_{2},
\end{align}
\begin{align}
A=R_{A_{33}}L_{B_{11}},B=B_{2}L_{B_{33}},C=R_{A_{33}}A_{44},D=B_{3}L_{B_{33}},
\end{align}
\begin{align}
E=R_{A_{33}}E_{1}L_{B_{33}},M=R_{A}C,N=DL_{B},S=CL_{M}.
\end{align}

Now we give the fundamental theorem of this section.

\begin{theorem}\label{theorem01}
Let $A_{i},B_{i},$ and $C_{i}(i=1,2,3,4)$ be given. Then the following statements are equivalent:\\$(1)$ The system of one-sided coupled Sylvester-type real quaternion matrix equations (\ref{sys01}) is consistent.\\$(2)$
\begin{align}\label{equhh037}
r\begin{pmatrix}C_{i}&A_{i}\\B_{i}&0\end{pmatrix}=r(A_{i})+r(B_{i}),\quad (i=1,2,3,4),
\end{align}
\begin{align}\label{equhh038}
r\begin{pmatrix}C_{1}&C_{2}&A_{1}&A_{2}\\B_{1}&B_{2}&0&0\end{pmatrix}=r(A_{1},~A_{2})+r(B_{1},~B_{2}),
\end{align}
\begin{align}\label{equhh039}
r\begin{pmatrix}A_{4}C_{3}+C_{4}B_{3}&A_{4}A_{3}\\B_{4}B_{3}&0\end{pmatrix}=r(A_{4}A_{3})+r(B_{4}B_{3}),
\end{align}
\begin{align}\label{equ034}
r\begin{pmatrix}C_{1}&C_{2}&A_{1}&A_{2}\\0&A_{4}C_{3}+C_{4}B_{3}&0&A_{4}A_{3}\\B_{1}&B_{2}&0&0\\0&B_{4}B_{3}&0&0\end{pmatrix}
=r\begin{pmatrix}A_{1}&A_{2}\\0&A_{4}A_{3}\end{pmatrix}+r\begin{pmatrix}B_{1}&B_{2}\\0&B_{4}B_{3}\end{pmatrix},
\end{align}
\begin{align}\label{equ035}
r\begin{pmatrix}C_{2}&A_{2}\\C_{3}&A_{3}\\B_{2}&0\\B_{3}&0\end{pmatrix}
=r\begin{pmatrix}A_{2}\\A_{3}\end{pmatrix}+r\begin{pmatrix}B_{2}\\B_{3}\end{pmatrix},
\end{align}
\begin{align}\label{equ036}
r\begin{pmatrix}C_{1}&C_{2}&A_{1}&A_{2}\\C_{3}&0&0&A_{3}\\B_{1}&B_{2}&0&0\\0&B_{3}&0&0\end{pmatrix}=
r\begin{pmatrix}A_{1}&A_{2}\\0&A_{3}\end{pmatrix}+r\begin{pmatrix}B_{1}&B_{2}\\0&B_{3}\end{pmatrix},
\end{align}
\begin{align}\label{equ037}
r\begin{pmatrix}C_{2}&A_{2}\\A_{4}C_{3}+C_{4}B_{3}&A_{4}A_{3}\\B_{2}&0\\B_{4}B_{3}&0\end{pmatrix}=
r\begin{pmatrix}A_{2}\\A_{4}A_{3}\end{pmatrix}+r\begin{pmatrix}B_{2}\\B_{4}B_{3}\end{pmatrix}.
\end{align}
$(3)$
\begin{align}\label{equhh314}
R_{A_{2}}C_{2}L_{B_{2}}=0,~D_{11}L_{C_{11}}=0,~R_{B_{11}}D_{11}=0,
\end{align}
\begin{align}\label{equhh315}
R_{A_{3}}C_{3}L_{B_{3}}=0,~R_{A_{22}}C_{22}=0,~C_{22}L_{B_{22}}=0,
\end{align}
\begin{align}\label{equhh316}
R_{M}R_{A}E=0,~EL_{B}L_{N}=0,~R_{A}EL_{D}=0,~R_{C}EL_{B}=0.
\end{align}

In this case, the general solution to the system of one-sided coupled Sylvester-type real quaternion matrix equations (\ref{sys01}) can be expressed as
\begin{align}\label{equhh317}
X_{1}=A_{1}^{\dag}(C_{1}-R_{A_{2}}C_{2}B_{2}^{\dag}B_{1}+A_{2}U_{1}B_{1})+W_{4}A_{11}+L_{A_{1}}W_{6},
\end{align}
\begin{align}
X_{2}=-R_{A_{2}}C_{2}B_{2}^{\dag}+A_{2}U_{1}+V_{1}R_{B_{2}},
~
X_{4}=-R_{A_{3}}C_{3}B_{3}^{\dag}+A_{3}U_{2}+V_{2}R_{B_{3}},
\end{align}
\begin{align}
X_{5}=-R_{(A_{4}A_{3})}(C_{4}+A_{4}R_{A_{3}}C_{3}B_{3}^{\dag}-A_{4}V_{2}R_{B_{3}})B_{4}^{\dag}+A_{4}A_{3}T_{4}+T_{5}R_{B_{4}},
\end{align}
\begin{align}\label{equhh320}
X_{3}=A_{2}^{\dag}C_{2}+U_{1}B_{2}+L_{A_{2}}W_{1},
~\mbox{or}~
X_{3}=A_{3}^{\dag}C_{3}+U_{2}B_{3}+L_{A_{3}}T_{1},
\end{align}
where
\begin{align}
U_{1}=B_{11}^{\dag}D_{11}C_{11}^{\dag}+L_{B_{11}}W_{2}+W_{3}R_{C_{11}},
\end{align}
\begin{align}
V_{1}=-R_{A_{1}}(C_{1}-R_{A_{2}}C_{2}B_{2}^{\dag}B_{1}+A_{2}U_{1}B_{1})A_{11}^{\dag}+A_{1}W_{4}+W_{5}R_{A_{11}},
\end{align}
\begin{align}
V_{2}=A_{22}^{\dag}C_{22}B_{22}^{\dag}+L_{A_{22}}T_{2}+T_{3}R_{B_{22}},
\end{align}
\begin{align}
U_{2}=(A_{4}A_{3})^{\dag}(C_{4}+A_{4}R_{A_{3}}C_{3}B_{3}^{\dag}-A_{4}V_{2}R_{B_{3}})+T_{4}B_{4}+L_{(A_{4}A_{3})}T_{6},
\end{align}
\begin{align}
W_{1}=(I_{p_{1}},~0)[A_{33}^{\dag}(E_{1}-L_{B_{11}}W_{2}B_{2}-A_{44}T_{6}B_{3})-A_{33}^{\dag}Z_{7}B_{33}+L_{A_{33}}Z_{6}],
\end{align}
\begin{align}
T_{1}=(0,~I_{p_{2}})[A_{33}^{\dag}(E_{1}-L_{B_{11}}W_{2}B_{2}-A_{44}T_{6}B_{3})-A_{33}^{\dag}Z_{7}B_{33}+L_{A_{33}}Z_{6}],
\end{align}
\begin{align}
W_{3}=[R_{A_{33}}(E_{1}-L_{B_{11}}W_{2}B_{2}-A_{44}T_{6}B_{3})B_{33}^{\dag}
+A_{33}A_{33}^{\dag}Z_{7}+Z_{8}R_{B_{33}}]\begin{pmatrix}I_{p_{3}}\\0\end{pmatrix},
\end{align}
\begin{align}
T_{4}=[R_{A_{33}}(E_{1}-L_{B_{11}}W_{2}B_{2}-A_{44}T_{6}B_{3})B_{33}^{\dag}
+A_{33}A_{33}^{\dag}Z_{7}+Z_{8}R_{B_{33}}]\begin{pmatrix}0\\I_{p_{4}}\end{pmatrix},
\end{align}
\begin{align}
W_{2}=
A^{\dag}EB^{\dag}-A^{\dag}CM^{\dag}EB^{\dag}-A^{\dag}SC^{\dag
}EN^{\dag}DB^{\dag}-A^{\dag}SZ_{1}R_{N}DB^{\dag}+L_{A}Z_{2}+Z_{3}%
R_{B},
\end{align}
\begin{align}
T_{6}=M^{\dag}ED^{\dag}+S^{\dag}SC^{\dag}EN^{\dag}+L_{M}L_{S}Z_{4}%
+L_{M}Z_{1}R_{N}+Z_{5}R_{D},
\end{align}
the remaining $W_{j},T_{j},Z_{j}$ are  arbitrary matrices over $\mathbb{H}$, $p_{1}$ and $p_{2}$ are the column numbers of $A_{2}$ and $A_{3}$, respectively, $p_{3}$ and $p_{4}$ are the row numbers of $B_{1}$ and $B_{4}$, respectively.

\end{theorem}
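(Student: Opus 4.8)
The plan is to decouple the four equations of system (\ref{sys01}) into three overlapping subsystems, solve each with a lemma from Section 2, and then glue the resulting parametric families together by means of the four-term equation (\ref{equ28}). First I would observe that the first two equations of (\ref{sys01}), namely $A_1X_1-X_2B_1=C_1$ and $A_2X_3-X_2B_2=C_2$, share the unknown $X_2$; rewriting the second as $A_2X_3-X_2B_2=C_2$ and pairing it with the first gives exactly the shape treated in Lemma \ref{lemma01} (with the roles of the two equations interchanged, which is why $A_{11}=R_{B_2}B_1$ and $D_{11}=R_{A_1}(R_{A_2}C_2B_2^{\dag}B_1-C_1)L_{A_{11}}$ appear). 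Applying Lemma \ref{lemma01} yields parametric expressions for $X_1,X_2$ and for $X_3$ in the form $X_3=A_2^{\dag}C_2+U_1B_2+L_{A_2}W_1$, together with the solvability conditions (\ref{equhh037}) for $i=1,2$ and the rank condition (\ref{equhh038}), equivalently (\ref{equhh314}). Independently, the last two equations $A_3X_3-X_4B_3=C_3$ and $A_4X_4-X_5B_4=C_4$ share $X_4$ and form a system of the type in Lemma \ref{lemma02}; this produces parametric expressions for $X_4,X_5$ and a second representation $X_3=A_3^{\dag}C_3+U_2B_3+L_{A_3}T_1$, with solvability conditions (\ref{equhh037}) for $i=3,4$, the condition (\ref{equhh039}), and (\ref{equhh315}).

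Next I would enforce consistency of the two representations of $X_3$. Setting $A_2^{\dag}C_2+U_1B_2+L_{A_2}W_1=A_3^{\dag}C_3+U_2B_3+L_{A_3}T_1$ and substituting the Lemma expressions for $U_1$ (through $W_2,W_3$) and $U_2$ (through $T_2,\dots,T_6$), one collects the still-free parameters. After moving the known pieces to one side—this is where $E_1=A_3^{\dag}C_3+(A_4A_3)^{\dag}(C_4B_3+A_4R_{A_3}C_3)-A_2^{\dag}C_2-B_{11}^{\dag}D_{11}C_{11}^{\dag}B_2$ is defined—the matching condition becomes a single linear equation in the unknown blocks $W_1,T_1$ (absorbed into $A_{33}=(L_{A_2},-L_{A_3})$ acting on the stacked unknown), $W_2$ (premultiplied by $L_{B_{11}}$, postmultiplied by $B_2$), $W_3$ (postmultiplied by the relevant factor, producing the $R_{C_{11}}B_2$ block of $B_{33}$), and $T_6$ (premultiplied by $A_{44}=-L_{(A_4A_3)}$, postmultiplied by $B_3$). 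That equation has precisely the form $A_{33}Z+W_2'B_{33}^{(1)}+A_{44}T_6 B_{33}^{(2)}=E_1$ handled by Lemma \ref{lemma03}; its consistency criterion, written out with $A,B,C,D,E,M,N,S$ as defined just before the theorem, is exactly (\ref{equhh316}), and its general solution supplies the formulas for $W_1,T_1,W_3,T_4,W_2,T_6$ displayed in the statement. Back-substitution then gives all of $X_1,\dots,X_5$, and the bookkeeping of the arbitrary matrices $W_j,T_j,Z_j$ is routine.

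It remains to show that the three blocks of rank conditions coming out of Lemmas \ref{lemma01}, \ref{lemma02}, \ref{lemma03} are equivalent to the clean list (\ref{equhh037})--(\ref{equ037}); this equivalence, together with the standard translation between the ``$L/R$ projector'' form (\ref{equhh314})--(\ref{equhh316}) and the pure-rank form, is where the real work lies. The tool is Lemma \ref{lemma04} (the two Guralnick--Penrose rank additivity identities, valid over $\mathbb{H}$), applied repeatedly to rewrite expressions such as $r(R_{A_{33}}E_1L_{B_{33}})$, $r(R_MR_AE)$, $r(C_{22}L_{B_{22}})$, etc., in terms of block matrices built directly from the data $A_i,B_i,C_i$; the nontrivial part is recognizing, after each elementary block-row/column operation, that the $4\times 4$ partitioned ranks collapse to the right-hand sides of (\ref{equ034})--(\ref{equ037}) and that no spurious conditions survive. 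I expect this rank-reduction computation—especially verifying (\ref{equ034}) and (\ref{equ037}), which involve the composite blocks $A_4A_3$, $B_4B_3$, and $A_4C_3+C_4B_3$—to be the main obstacle; everything else is an organized application of the four cited lemmas. Finally, uniqueness of the displayed general solution follows because each lemma invoked already comes with a complete parametrization, so no solution of (\ref{sys01}) is missed and every choice of the free parameters yields a genuine solution.
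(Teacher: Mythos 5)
Your proposal is correct and follows essentially the same route as the paper: split (\ref{sys01}) into the pair of its first two equations (handled by Lemma \ref{lemma01}) and the pair of its last two (handled by Lemma \ref{lemma02}), equate the two resulting expressions for $X_{3}$ to obtain a four-term equation of the form (\ref{equ28}) solved by Lemma \ref{lemma03}, and translate the projector conditions into the rank conditions (\ref{equ034})--(\ref{equ037}) via Lemma \ref{lemma04}, which is exactly the paper's third step (it writes out one such rank chain and treats the others ``similarly''). Only cosmetic slips: the decomposition is into two subsystems rather than three, and Lemma \ref{lemma04} is due to Marsaglia and Styan, not Guralnick--Penrose.
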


\begin{proof}
We separate this system of one-sided coupled Sylvester-type real quaternion matrix equations (\ref{sys01}) into two parts
\begin{align} \label{equ38}
  \left\{\begin{array}{c}
A_{2}X_{3}-X_{2}B_{2}=C_{2},\\
A_{1}X_{1}-X_{2}B_{1}=C_{1},
\end{array}
  \right.
\end{align}
and
\begin{align} \label{equ39}
  \left\{\begin{array}{c}
A_{3}X_{3}-X_{4}B_{3}=C_{3},\\
A_{4}X_{4}-X_{5}B_{4}=C_{4}.
\end{array}
  \right.
\end{align}
Observe that system (\ref{equ38}) has the form of (\ref{bufensys02}), and system (\ref{equ39}) has the form of (\ref{bufensys01}). We can solve the system of one-sided coupled Sylvester-type real quaternion matrix equations (\ref{sys01}) through the following three steps. In the first step, we consider the system (\ref{equ38}). It follows from Lemma \ref{lemma01} that the system (\ref{equ38}) is consistent if and only if
\begin{align}
r\begin{pmatrix}C_{n}&A_{n}\\B_{n}&0\end{pmatrix}=r(A_{n})+r(B_{n}),~(n=1,2),
\end{align}
\begin{align}
r\begin{pmatrix}C_{1}&C_{2}&A_{1}&A_{2}\\B_{1}&B_{2}&0&0\end{pmatrix}=r(A_{1},~A_{2})+r(B_{1},~B_{2}),
\end{align}
or
\begin{align}
R_{A_{2}}C_{2}L_{B_{2}}=0,~R_{B_{11}}D_{11}=0,~D_{11}L_{C_{11}}=0.
\end{align}
In this case, the general solution to the system (\ref{equ38}) can be expressed as
\begin{align}\label{equ314}
X_{3}=A_{2}^{\dag}C_{2}+U_{1}B_{2}+L_{A_{2}}W_{1},
\end{align}
\begin{align}
X_{2}=-R_{A_{2}}C_{2}B_{2}^{\dag}+A_{2}U_{1}+V_{1}R_{B_{2}},
\end{align}
\begin{align}
X_{1}=A_{1}^{\dag}(C_{1}-R_{A_{2}}C_{2}B_{2}^{\dag}B_{1}+A_{2}U_{1}B_{1})+W_{4}A_{11}+L_{A_{1}}W_{6},
\end{align}
where
\begin{align}
U_{1}=B_{11}^{\dag}D_{11}C_{11}^{\dag}+L_{B_{11}}W_{2}+W_{3}R_{C_{11}},
\end{align}
\begin{align}
V_{1}=-R_{A_{1}}(C_{1}-R_{A_{2}}C_{2}B_{2}^{\dag}B_{1}+A_{2}U_{1}B_{1})A_{11}^{\dag}+A_{1}W_{4}+W_{5}R_{A_{11}},
\end{align}
and $W_{1}, \cdots, W_{6}$ are arbitrary matrices over $\mathbb{H}$ with appropriate sizes.

In the second step, we consider the system (\ref{equ39}). It follows from Lemma \ref{lemma02} that the system (\ref{equ39}) is consistent if and only if
\begin{align}
r\begin{pmatrix}C_{n}&A_{n}\\B_{n}&0\end{pmatrix}=r(A_{n})+r(B_{n}),~(n=3,4),
\end{align}
\begin{align}
r\begin{pmatrix}A_{4}C_{3}+C_{4}B_{3}&A_{4}A_{3}\\B_{4}B_{3}&0\end{pmatrix}=r(A_{4}A_{3})+r(B_{4}B_{3}),
\end{align}
or
\begin{align}
R_{A_{3}}C_{3}L_{B_{3}}=0,~R_{A_{22}}C_{22}=0,~C_{22}L_{B_{22}}=0.
\end{align}
In this case, the general solution to the system (\ref{equ39}) can be expressed as
\begin{align}\label{equ323}
X_{3}=A_{3}^{\dag}C_{3}+U_{2}B_{3}+L_{A_{3}}T_{1},
\end{align}
\begin{align}
X_{4}=-R_{A_{3}}C_{3}B_{3}^{\dag}+A_{3}U_{2}+V_{2}R_{B_{3}},
\end{align}
\begin{align}
X_{5}=-R_{(A_{4}A_{3})}(C_{4}+A_{4}R_{A_{3}}C_{3}B_{3}^{\dag}-A_{4}V_{2}R_{B_{3}})B_{4}^{\dag}+A_{4}A_{3}T_{4}+T_{5}R_{B_{4}},
\end{align}
where
\begin{align}
V_{2}=A_{22}^{\dag}C_{22}B_{22}^{\dag}+L_{A_{22}}T_{2}+T_{3}R_{B_{22}},
\end{align}
\begin{align}
U_{2}=(A_{4}A_{3})^{\dag}(C_{4}+A_{4}R_{A_{3}}C_{3}B_{3}^{\dag}-A_{4}V_{2}R_{B_{3}})+T_{4}B_{4}+L_{(A_{4}A_{3})}T_{6},
\end{align}
and $T_{1}, \cdots, T_{6}$ are arbitrary matrices over $\mathbb{H}$ with appropriate sizes.

In the third step, equating $X_{3}$ in (\ref{equ314}) and $X_{3}$ in (\ref{equ323}) gives
\begin{align*}
&A_{2}^{\dag}C_{2}+(B_{11}^{\dag}D_{11}C_{11}^{\dag}+L_{B_{11}}W_{2}+W_{3}R_{C_{11}})B_{2}+L_{A_{2}}W_{1}\\=
&A_{3}^{\dag}C_{3}+(A_{4}A_{3})^{\dag}(C_{4}+A_{4}R_{A_{3}}C_{3}B_{3}^{\dag})B_{3}+T_{4}B_{4}B_{3}+L_{(A_{4}A_{3})}T_{6}B_{3}+L_{A_{3}}T_{1},
\end{align*}
i.e.,
\begin{align} \label{equ328}
A_{33}\begin{pmatrix}W_{1}\\T_{1}\end{pmatrix}+(W_{3},~T_{4})B_{33}+L_{B_{11}}W_{2}B_{2}+A_{44}T_{6}B_{3}=E_{1}.
\end{align}
Now we want to solve the matrix equation (\ref{equ328}). It follows from Lemma \ref{lemma03} that the matrix equation (\ref{equ328}) is consistent if and only if
\begin{align}\label{equ331}
R_{M}R_{A}E=0,EL_{B}L_{N}=0,R_{A}EL_{D}=0,R_{C}EL_{B}=0.
\end{align}
Hence, the general solution to the matrix equation (\ref{equ328}) can be expressed as
\begin{align}
\begin{pmatrix}W_{1}\\T_{1}\end{pmatrix}=
A_{33}^{\dag}(E_{1}-L_{B_{11}}W_{2}B_{2}-A_{44}T_{6}B_{3})-A_{33}^{\dag}Z_{7}B_{33}+L_{A_{33}}Z_{6},
\end{align}
\begin{align}
(W_{3},~T_{4})=R_{A_{33}}(E_{1}-L_{B_{11}}W_{2}B_{2}-A_{44}T_{6}B_{3})B_{33}^{\dag}
+A_{33}A_{33}^{\dag}Z_{7}+Z_{8}R_{B_{33}},
\end{align}
\begin{align}
W_{2}=
A^{\dag}EB^{\dag}-A^{\dag}CM^{\dag}EB^{\dag}-A^{\dag}SC^{\dag
}EN^{\dag}DB^{\dag}-A^{\dag}SZ_{1}R_{N}DB^{\dag}+L_{A}Z_{2}+Z_{3}%
R_{B},
\end{align}
\begin{align}
T_{6}=M^{\dag}ED^{\dag}+S^{\dag}SC^{\dag}EN^{\dag}+L_{M}L_{S}Z_{4}%
+L_{M}Z_{1}R_{N}+Z_{5}R_{D},
\end{align}
where $Z_{1},\ldots,Z_{8}$ are arbitrary matrices over $\mathbb{H}$ with appropriate
sizes.

Now we want to prove that (\ref{equ331}) $\Longleftrightarrow$ (\ref{equ034})-(\ref{equ037}). At first, we prove that $R_{M}R_{A}E=0$ is equivalent to (\ref{equ034}). Applying Lemma \ref{lemma04} to $R_{M}R_{A}E=0$ gives
\begin{align*}
&R_{M}R_{A}E=0
\Leftrightarrow  r(R_{A}E,~M)=r(M)
\\&
\Leftrightarrow r(R_{A}E,~R_{A}C)=r(R_{A}C)\\&
\Leftrightarrow r(A,~C,~E)=r(A,~C)\\&
\Leftrightarrow r\begin{pmatrix}E_{1}&L_{B_{11}}&A_{44}&A_{33}\\B_{33}&0&0&0\end{pmatrix}=r(L_{B_{11}},~A_{44},~A_{33})+r(B_{33})\\
&\Leftrightarrow r\begin{pmatrix}E_{1}&L_{B_{11}}&L_{(A_{4}A_{3})}&L_{A_{2}}&L_{A_{3}}\\
R_{C_{11}}B_{2}&0&0&0&0\\B_{4}B_{3}&0&0&0&0\end{pmatrix}=r(L_{B_{11}},L_{(A_{4}A_{3})},L_{A_{2}},L_{A_{3}})+
r\begin{pmatrix}R_{C_{11}}B_{2}\\B_{4}B_{3}\end{pmatrix}\\
&\Leftrightarrow r\begin{pmatrix}E_{1}&I&L_{(A_{4}A_{3})}\\R_{C_{11}}B_{2}&0&0\\B_{4}B_{3}&0&0\\0&B_{11}&0\end{pmatrix}=
r\begin{pmatrix}I&L_{(A_{4}A_{3})}\\B_{11}&0\end{pmatrix}+r\begin{pmatrix}R_{C_{11}}B_{2}\\B_{4}B_{3}\end{pmatrix}\\
&\Leftrightarrow r\begin{pmatrix}E_{1}&I&I&0&0\\B_{2}&0&0&0&B_{1}\\B_{4}B_{3}&0&0&0&0\\0&A_{2}&0&A_{1}&0\\0&0&A_{4}A_{3}&0&0\end{pmatrix}=
r\begin{pmatrix}I&I&0\\A_{2}&0&A_{1}\\0&A_{4}A_{3}&0\end{pmatrix}+r\begin{pmatrix}B_{1}&B_{2}\\0&B_{4}B_{3}\end{pmatrix}\\
&\Leftrightarrow r\begin{pmatrix}C_{1}&C_{2}&A_{1}&A_{2}\\0&A_{4}C_{3}+C_{4}B_{3}&0&A_{4}A_{3}\\B_{1}&B_{2}&0&0\\0&B_{4}B_{3}&0&0\end{pmatrix}
=r\begin{pmatrix}A_{1}&A_{2}\\0&A_{4}A_{3}\end{pmatrix}+r\begin{pmatrix}B_{1}&B_{2}\\0&B_{4}B_{3}\end{pmatrix}\\
&\Leftrightarrow (\ref{equ034}).
\end{align*}Similarly, we can show that $EL_{B}L_{N}=0,R_{A}EL_{D}=0$ and $R_{C}EL_{B}=0$ are equivalent to (\ref{equ035}), (\ref{equ036}) and (\ref{equ037}), respectively.
\end{proof}

Next we give an example to illustrate Theorem \ref{theorem01}.

\begin{example}
Given the quaternion matrices:
\begin{align*}
A_{1}=\begin{pmatrix}1+\mathbf{k}&1+\mathbf{i}-\mathbf{k}&\mathbf{i}+\mathbf{j}\\
-1&2\mathbf{k}&2+\mathbf{j}+\mathbf{k}\\ \mathbf{k}& 1+\mathbf{i}+\mathbf{k}&2+\mathbf{i}+2\mathbf{j}+\mathbf{k}\end{pmatrix}, ~
B_{1}=\begin{pmatrix}2\mathbf{i}+\mathbf{k}&0&1+\mathbf{i}+\mathbf{j}+\mathbf{k}\\
2\mathbf{i}-\mathbf{j}+\mathbf{k}&-1+\mathbf{j}&1+\mathbf{j}\\ \mathbf{j}&1-\mathbf{j}& \mathbf{i}+\mathbf{k}\end{pmatrix},
\end{align*}
\begin{align*}
A_{2}=\begin{pmatrix}1+\mathbf{k}&2&\mathbf{i}\\2\mathbf{j}&1-\mathbf{j}&-\mathbf{i}+\mathbf{k}\\
\mathbf{i}+\mathbf{j}+\mathbf{k}&1&\mathbf{k}\end{pmatrix},~
B_{2}=\begin{pmatrix}-2+\mathbf{k}&\mathbf{i}&1-\mathbf{j}\\-\mathbf{j}&1&\mathbf{i}-\mathbf{k}\\
-2&1+\mathbf{i}+\mathbf{j}&0\end{pmatrix},
\end{align*}
\begin{align*}
A_{3}=\begin{pmatrix}1+\mathbf{k}&\mathbf{i}+\mathbf{k}&\mathbf{j}\\ \mathbf{i}-\mathbf{j}&-1-\mathbf{j}&\mathbf{k}\\
1+\mathbf{i}-\mathbf{j}+\mathbf{k}&-1+\mathbf{i}-\mathbf{j}+\mathbf{k}&\mathbf{j}+\mathbf{k}\end{pmatrix},~
B_{3}=\begin{pmatrix}-1+\mathbf{j}+\mathbf{k}&1+\mathbf{k}&\mathbf{i}+\mathbf{j}\\
1-\mathbf{j}+\mathbf{k}&-1-\mathbf{j}&-\mathbf{i}+\mathbf{k}\\
2\mathbf{k}&-\mathbf{j}+\mathbf{k}&\mathbf{j}+\mathbf{k}\end{pmatrix},
\end{align*}
\begin{align*}
A_{4}=\begin{pmatrix}\mathbf{j}&1-\mathbf{j}&\mathbf{i}+\mathbf{k}\\
\mathbf{i}&-1+\mathbf{k}&\mathbf{j}\\
\mathbf{i}+\mathbf{j}&-\mathbf{j}+\mathbf{k}&\mathbf{i}+\mathbf{j}+\mathbf{k}\end{pmatrix},~
B_{4}=\begin{pmatrix}1+\mathbf{k}&-1-\mathbf{k}&\mathbf{i}+\mathbf{j}\\
\mathbf{j}&\mathbf{i}+\mathbf{k}&1\\
1+\mathbf{j}+\mathbf{k}&-1+\mathbf{i}&1+\mathbf{i}+\mathbf{j}\end{pmatrix},
\end{align*}
\begin{align*}
C_{1}=\begin{pmatrix}-1-\mathbf{i}+5\mathbf{k}&-2-3\mathbf{j}+5\mathbf{k}&3\mathbf{i}+5\mathbf{j}\\
-1-7\mathbf{i}+\mathbf{j}+2\mathbf{k}&3\mathbf{j}+5\mathbf{k}&4-8\mathbf{i}+7\mathbf{j}\\
-5-3\mathbf{i}+\mathbf{k}&-4-3\mathbf{i}+2\mathbf{j}+5\mathbf{k}&1-5\mathbf{i}+9\mathbf{j}-4\mathbf{k}\end{pmatrix},
\end{align*}
\begin{align*}
C_{2}=\begin{pmatrix}4\mathbf{i}+6\mathbf{j}+3\mathbf{k}&-2+5\mathbf{i}+4\mathbf{j}&-3-\mathbf{i}-\mathbf{j}\\
2-\mathbf{i}+9\mathbf{j}-6\mathbf{k}&6-5\mathbf{i}+5\mathbf{j}-4\mathbf{k}&-5+2\mathbf{i}-3\mathbf{j}+2\mathbf{k}\\
4+\mathbf{i}+6\mathbf{j}+3\mathbf{k}&-2-\mathbf{i}+3\mathbf{j}-2\mathbf{k}&-2-\mathbf{i}+3\mathbf{j}\end{pmatrix},
\end{align*}
\begin{align*}
C_{3}=\begin{pmatrix}7\mathbf{j}+6\mathbf{k}&4+\mathbf{i}+\mathbf{j}&-1+2\mathbf{i}+6\mathbf{j}\\
-3-5\mathbf{i}+2\mathbf{j}+\mathbf{k}&-1+6\mathbf{j}+5\mathbf{k}&2-\mathbf{i}+4\mathbf{j}\\
-8+2\mathbf{i}+3\mathbf{j}+6\mathbf{k}&3+4\mathbf{i}+9\mathbf{j}-2\mathbf{k}&-2+\mathbf{i}+3\mathbf{j}-2\mathbf{k}\end{pmatrix},
\end{align*}
\begin{align*}
C_{4}=\begin{pmatrix}-1-2\mathbf{i}-\mathbf{j}-3\mathbf{k}&3-3\mathbf{i}-2\mathbf{j}+\mathbf{k}&1+2\mathbf{i}\\
2-3\mathbf{i}-3\mathbf{j}-\mathbf{k}&-5-2\mathbf{i}+\mathbf{j}+\mathbf{k}&1-2\mathbf{i}-2\mathbf{j}\\
1-5\mathbf{i}-4\mathbf{j}-4\mathbf{k}&-2-5\mathbf{i}-\mathbf{j}+2\mathbf{k}&2-2\mathbf{j}\end{pmatrix}.
\end{align*}
Now we consider the system of one-sided coupled Sylvester-type real quaternion matrix equations (\ref{sys01}). Check that
\begin{align*}
r\begin{pmatrix}C_{i}&A_{i}\\B_{i}&0\end{pmatrix}=r(A_{i})+r(B_{i})=
\left\{\begin{array}{lll}
4, &\mbox{if}~
 i=1\\
6, & \mbox{if}~
 i=2\\
3, &\mbox{if}~
 i=3\\
4, & \mbox{if}~
 i=4
\end{array},
 \right.
\end{align*}
\begin{align*}
r\begin{pmatrix}C_{1}&C_{2}&A_{1}&A_{2}\\B_{1}&B_{2}&0&0\end{pmatrix}=r(A_{1},~A_{2})+r(B_{1},~B_{2})=6,
\end{align*}
\begin{align*}
r\begin{pmatrix}A_{4}C_{3}+C_{4}B_{3}&A_{4}A_{3}\\B_{4}B_{3}&0\end{pmatrix}=r(A_{4}A_{3})+r(B_{4}B_{3})=3,
\end{align*}
\begin{align*}
r\begin{pmatrix}C_{1}&C_{2}&A_{1}&A_{2}\\0&A_{4}C_{3}+C_{4}B_{3}&0&A_{4}A_{3}\\B_{1}&B_{2}&0&0\\0&B_{4}B_{3}&0&0\end{pmatrix}
=r\begin{pmatrix}A_{1}&A_{2}\\0&A_{4}A_{3}\end{pmatrix}+r\begin{pmatrix}B_{1}&B_{2}\\0&B_{4}B_{3}\end{pmatrix}=9,
\end{align*}
\begin{align*}
r\begin{pmatrix}C_{2}&A_{2}\\C_{3}&A_{3}\\B_{2}&0\\B_{3}&0\end{pmatrix}
=r\begin{pmatrix}A_{2}\\A_{3}\end{pmatrix}+r\begin{pmatrix}B_{2}\\B_{3}\end{pmatrix}=6,
\end{align*}
\begin{align*}
r\begin{pmatrix}C_{1}&C_{2}&A_{1}&A_{2}\\C_{3}&0&0&A_{3}\\B_{1}&B_{2}&0&0\\0&B_{3}&0&0\end{pmatrix}=
r\begin{pmatrix}A_{1}&A_{2}\\0&A_{3}\end{pmatrix}+r\begin{pmatrix}B_{1}&B_{2}\\0&B_{3}\end{pmatrix}=9.
\end{align*}
\begin{align*}
r\begin{pmatrix}C_{2}&A_{2}\\A_{4}C_{3}+C_{4}B_{3}&A_{4}A_{3}\\B_{2}&0\\B_{4}B_{3}&0\end{pmatrix}=
r\begin{pmatrix}A_{2}\\A_{4}A_{3}\end{pmatrix}+r\begin{pmatrix}B_{2}\\B_{4}B_{3}\end{pmatrix}=6.
\end{align*}
All the rank equalities in (\ref{equhh037})-(\ref{equ037}) hold. Hence, the system of one-sided coupled Sylvester-type real quaternion matrix equations (\ref{sys01}) is consistent. Note that
\begin{align*}
X_{1}=\begin{pmatrix}2+\mathbf{j}+\mathbf{k}&1-2\mathbf{i}+\mathbf{k}&\mathbf{i}-2\mathbf{k}\\
-2&\mathbf{i}+\mathbf{j}&1+\mathbf{i}+2\mathbf{j}\\
\mathbf{j}+\mathbf{k}&1-\mathbf{i}+\mathbf{j}+\mathbf{k}&1+2\mathbf{i}+2\mathbf{j}-2\mathbf{k}\end{pmatrix}
~
X_{2}=\begin{pmatrix}\mathbf{i}+\mathbf{j}&-\mathbf{i}-\mathbf{j}&\mathbf{k}\\
1+2\mathbf{j}&\mathbf{i}+\mathbf{j}+\mathbf{k}&-1+\mathbf{i}+\mathbf{j}-\mathbf{k}\\
2+\mathbf{k}&-\mathbf{k}&1+\mathbf{j}+2\mathbf{k}\end{pmatrix},
\end{align*}
\begin{align*}
X_{3}=\begin{pmatrix}2\mathbf{i}+\mathbf{k}&1+3\mathbf{i}-\mathbf{k}&\mathbf{j}\\
1+\mathbf{j}&-1&\mathbf{i}-\mathbf{j}\\
1+2\mathbf{i}+\mathbf{j}+\mathbf{k}&3\mathbf{i}-\mathbf{k}&\mathbf{i}\end{pmatrix}
~
X_{4}=\begin{pmatrix}-2+\mathbf{j}&-1+2\mathbf{i}&\mathbf{k}\\
0&1+\mathbf{k}&\mathbf{i}+\mathbf{j}\\
-2+\mathbf{k}&\mathbf{j}-\mathbf{k}&1\end{pmatrix},
\end{align*}and
\begin{align*}
X_{5}=\begin{pmatrix}-2+\mathbf{j}&-1+2\mathbf{i}&\mathbf{k}\\
0&1+\mathbf{k}&\mathbf{i}+\mathbf{j}\\
-2+\mathbf{k}&\mathbf{j}-\mathbf{k}&1\end{pmatrix}
\end{align*}
satisfy the system (\ref{sys01}).

\end{example}

Let $A_{4},B_{4},$ and $C_{4}$ vanish in Theorem \ref{theorem01}. Then we can obtain some necessary and sufficient conditions and general solution to the system of coupled generalized  Sylvester real quaternion matrix
equations (\ref{specialsys01}).
\begin{corollary}\label{coro01}\cite{auto001}
Let $A_{i},B_{i},$ and $C_{i}(i=1,2,3)$ be given. Set
\begin{align*}
&A_{4}=A_{2}L_{A_{3}},B_{4}=R_{B_{1}}B_{2},A=R_{A_{4}}A_{2},B=B_{3}L_{B_{4}},\\&C=R_{A_{4}}A_{1},D=B_{2}L_{B_{4}}
,M=R_{A}C,N=DL_{B},S=CL_{M},\\&C_{4}=C_{2}-A_{2}A_{3}^{\dag}C_{3}-R_{A_{1}}C_{1}B_{1}^{\dag}B_{2},E=R_{A_{4}}C_{4}L_{B_{4}}.
\end{align*}
Then the following statements are equivalent:\\
$(1)$ The system of coupled generalized  Sylvester real quaternion matrix
equations (\ref{specialsys01}) is consistent.\\
$(2)$
\begin{align*}
r\begin{pmatrix}C_{i}&A_{i}\\B_{i}&0\end{pmatrix}=r(A_{i})+r(B_{i}),(i=1,2,3),
\end{align*}
\begin{align*}
r\begin{pmatrix}A_{1}&A_{2}&C_{1}&C_{2}\\0&0&B_{1}&B_{2}\end{pmatrix}=r(A_{1}~A_{2})
+r(B_{1}~B_{2}),
\end{align*}
\begin{align*}
r\begin{pmatrix}B_{2}&0\\B_{3}&0\\C_{2}&A_{2}\\C_{3}&A_{3}\end{pmatrix}
=r\begin{pmatrix}A_{2}\\A_{3}\end{pmatrix}+r\begin{pmatrix}B_{2}\\B_{3}\end{pmatrix},
\end{align*}
\begin{align*}
r\begin{pmatrix}C_{2}&C_{1}&A_{1}&A_{2}\\C_{3}&0&0&A_{3}\\B_{2}&B_{1}&0&0\\B_{3}&0&0&0\end{pmatrix}
=r\begin{pmatrix}A_{1}&A_{2}\\0&A_{3}\end{pmatrix}+r\begin{pmatrix}B_{2}&B_{1}\\B_{3}&0\end{pmatrix}.
\end{align*}

$(3)$
\begin{align*}
&R_{A_{i}}C_{i}L_{B_{i}}=0,(i=1,2),R_{M}R_{A}E=0,\\&
EL_{B}L_{N}=0, R_{A}EL_{D}=0,  R_{C}EL_{B}=0.
\end{align*}

In this case, the general solution to the coupled generalized  Sylvester real quaternion matrix
equations (\ref{specialsys01}) can be expressed as
\begin{align*}
X=A_{1}^{\dag}C_{1}-U_{1}B_{1}-L_{A_{1}}U_{2},
~
Y=-R_{A_{1}}C_{1}B_{1}^{\dag}-A_{1}U_{1}-U_{3}R_{B_{1}},
\end{align*}
\begin{align*}
Z=A_{3}^{\dag}C_{3}+V_{1}B_{3}+L_{A_{3}}V_{2},
~
W=-R_{A_{3}}C_{3}B_{3}^{\dag}+A_{3}V_{1}+V_{3}R_{B_{3}},
\end{align*}
where
\begin{align*}
  V_{2}=A_{4}^{\dag}(C_{4}-A_{2}V_{1}B_{3}-A_{1}U_{1}B_{2})-A_{4}^{\dag}%
T_{7}B_{4}+L_{A_{4}}T_{6},
\end{align*}
\begin{align*}
  U_{3}=R_{A_{4}}(C_{4}-A_{2}V_{1}B_{3}-A_{1}U_{1}B_{2})B_{4}^{\dag}%
+A_{4}A_{4}^{\dag}T_{7}+T_{8}R_{B_{4}},
\end{align*}
\begin{align*}
V_{1}=A^{\dag}EB^{\dag}-A^{\dag}CM^{\dag}EB^{\dag}-A^{\dag}SC^{\dag
}EN^{\dag}DB^{\dag}-A^{\dag}ST_{2}R_{N}DB^{\dag}+L_{A}T_{4}+T_{5}%
R_{B},
\end{align*}
\begin{align*}
 U_{1}=M^{\dag}ED^{\dag}+S^{\dag}SC^{\dag}EN^{\dag}+L_{M}L_{S}T_{1}%
+L_{M}T_{2}R_{N}+T_{3}R_{D},
\end{align*}
and $U_{2},V_{3},T_{1}, \ldots, T_{8}$ are arbitrary matrices over
$\mathbb{H}$ with appropriate sizes.
\end{corollary}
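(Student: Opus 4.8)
The plan is to obtain Corollary \ref{coro01} as the degenerate case of Theorem \ref{theorem01} in which the fourth equation of (\ref{sys01}) is absent, i.e. the given data $A_{4},B_{4},C_{4}$ are void. Under the renaming $(X_{1},X_{2},X_{3},X_{4})=(X,Y,Z,W)$ the equation $A_{4}X_{4}-X_{5}B_{4}=C_{4}$ disappears, $X_{5}$ becomes free and is discarded, and the remaining three equations are precisely (\ref{specialsys01}). The proof then splits into reducing the solvability conditions and reducing the solution formulas.

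Reducing the conditions is the concrete bookkeeping. Setting $A_{4}=B_{4}=C_{4}$ void in (\ref{equhh037})--(\ref{equ037}): the $i=4$ instance of (\ref{equhh037}) and all of (\ref{equhh039}) become trivial; in (\ref{equ034}) the block rows carrying $A_{4}A_{3},B_{4}B_{3}$ and $A_{4}C_{3}+C_{4}B_{3}$ vanish, so (\ref{equ034}) collapses onto (\ref{equhh038}), and likewise (\ref{equ037}) collapses onto the $i=2$ instance of (\ref{equhh037}); both are thus redundant. What survives is (\ref{equhh037}) for $i=1,2,3$ together with (\ref{equhh038}), (\ref{equ035}) and (\ref{equ036}), which after elementary permutations of block rows and columns are exactly the six rank equalities of part $(2)$ of the corollary. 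By Theorem \ref{theorem01} this gives $(1)\Leftrightarrow(2)$ for (\ref{specialsys01}).

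For the projector form $(3)$ and the general solution I would carry out the construction directly, as in the third step of the proof of Theorem \ref{theorem01}. Solve the two terminal equations individually: $A_{1}X-YB_{1}=C_{1}$ is consistent iff $R_{A_{1}}C_{1}L_{B_{1}}=0$ and yields the displayed $X,Y$, while $A_{3}Z-WB_{3}=C_{3}$ is consistent iff $R_{A_{3}}C_{3}L_{B_{3}}=0$ and yields the displayed $Z,W$. Substituting these into the middle equation $A_{2}Z-YB_{2}=C_{2}$ and collecting the free parameters produces a four-term matrix equation of the form (\ref{equ28}), namely $A_{2}L_{A_{3}}V_{2}+U_{3}R_{B_{1}}B_{2}+A_{1}U_{1}B_{2}+A_{2}V_{1}B_{3}=C_{4}$ with $C_{4}=C_{2}-A_{2}A_{3}^{\dag}C_{3}-R_{A_{1}}C_{1}B_{1}^{\dag}B_{2}$. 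Applying Lemma \ref{lemma03} with $A_{4}=A_{2}L_{A_{3}}$ and $B_{4}=R_{B_{1}}B_{2}$ reproduces exactly the auxiliary matrices $A=R_{A_{4}}A_{2}$, $B=B_{3}L_{B_{4}}$, $C=R_{A_{4}}A_{1}$, $D=B_{2}L_{B_{4}}$, $M=R_{A}C$, $N=DL_{B}$, $S=CL_{M}$, $E=R_{A_{4}}C_{4}L_{B_{4}}$, hence the four consistency conditions $R_{M}R_{A}E=0$, $EL_{B}L_{N}=0$, $R_{A}EL_{D}=0$, $R_{C}EL_{B}=0$ and the stated formulas for $X,Y,Z,W$.

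The main difficulty is structural rather than computational. Theorem \ref{theorem01} groups the four equations as the pairs $\{1,2\}$ and $\{3,4\}$ and glues them through a common $X_{3}$, whereas the corollary is cleanest when the two terminal equations (those containing $X$ and $W$) are solved first and the middle equation in $Y,Z$ is reduced. The two routes therefore manufacture different intermediate auxiliary matrices, so the projector conditions $(3)$ and the explicit solution cannot be read off from Theorem \ref{theorem01} by a literal substitution. The way to close this gap is to observe that, by Lemma \ref{lemma04}, each of the two formulations of $(3)$ is equivalent to the same six rank equalities of $(2)$; since those coincide after specialization, the conditions agree, and the solution formulas then match after the bookkeeping above.
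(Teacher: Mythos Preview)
Your proposal is correct and aligns with the paper's treatment: the paper derives the corollary precisely by letting $A_{4},B_{4},C_{4}$ vanish in Theorem~\ref{theorem01}, and the corollary itself is quoted from \cite{auto001} with no further argument. Your reduction of the rank equalities (\ref{equhh037})--(\ref{equ037}) to the six conditions of part~$(2)$ is exactly the specialization the paper intends.

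You go beyond the paper on one point, and rightly so. The paper glosses over the fact that the auxiliary matrices in Theorem~\ref{theorem01} (built from the $\{1,2\}/\{3,4\}$ grouping) do not literally specialize to those of the corollary (which are built from the ``terminal/middle'' split). Your direct derivation---solve $A_{1}X-YB_{1}=C_{1}$ and $A_{3}Z-WB_{3}=C_{3}$ separately, substitute into $A_{2}Z-YB_{2}=C_{2}$, and apply Lemma~\ref{lemma03}---is precisely the construction of \cite{auto001} and reproduces the stated $A_{4},B_{4},A,B,C,D,M,N,S,E$ verbatim. Your remark that the two routes can be reconciled via Lemma~\ref{lemma04} (both sets of projector conditions being equivalent to the same six rank equalities) is the correct way to close the gap the paper leaves implicit. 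One small cosmetic note: in your four-term equation, the pairing with Lemma~\ref{lemma03} requires $A_{2}V_{1}B_{3}$ as the third term and $A_{1}U_{1}B_{2}$ as the fourth (so that $X_{3}\leftrightarrow V_{1}$, $X_{4}\leftrightarrow U_{1}$); you clearly have this in mind since your auxiliary matrices come out right, but it is worth writing the terms in that order.
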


\section{\textbf{Some solvability conditions and the general solution to
system (\ref{sys02})}}

In this section, we consider the solvability conditions and the general solution to the
system of one-sided coupled Sylvester-type real quaternion matrix equations (\ref{sys02}). For simplicity, put
\begin{align*}
A_{11}=R_{(A_{2}A_{1})}A_{2},~B_{11}=R_{B_{1}}L_{B_{2}},~C_{11}=R_{(A_{2}A_{1})}(A_{2}R_{A_{1}}C_{1}B_{1}^{\dag}+C_{2})L_{B_{2}},
\end{align*}
\begin{align*}
A_{22}=R_{B_{3}}B_{4},~B_{22}=R_{A_{4}}A_{3},~C_{22}=B_{4}L_{A_{22}},~D_{22}=R_{A_{4}}(R_{A_{3}}C_{3}B_{3}^{\dag}B_{4}-C_{4})L_{A_{22}},
\end{align*}
\begin{align*}
A_{33}=(A_{2}A_{1},~-L_{A_{3}}),~B_{33}=\begin{pmatrix}R_{B_{2}}\\-R_{C_{22}}B_{3}\end{pmatrix},~A_{44}=R_{B_{11}}R_{B_{1}}B_{2}^{\dag},~B_{44}=-L_{B_{22}},
\end{align*}
\begin{align*}
E_{1}=A_{3}^{\dag}C_{3}+B_{22}^{\dag}D_{22}C_{22}^{\dag}B_{3}+R_{(A_{2}A_{1})}C_{2}B_{2}^{\dag}+A_{11}R_{A_{1}}C_{1}B_{1}^{\dag}B_{2}^{\dag}-C_{11}B_{11}^{\dag}R_{B_{1}}B_{2}^{\dag},
\end{align*}
\begin{align*}
A=R_{A_{33}}A_{11},~B=A_{44}L_{B_{33}},~C=R_{A_{33}}B_{44},~D=B_{3}L_{B_{33}},
\end{align*}
\begin{align*}
E=R_{A_{33}}E_{1}L_{B_{33}},M=R_{A}C,N=DL_{B},S=CL_{M}.
\end{align*}

Now we give the fundamental theorem of this section.
\begin{theorem}\label{theorem02}
Let $A_{i},B_{i},$ and $C_{i}(i=1,2,3,4)$ be given. Then the following statements are equivalent:\\$(1)$ The system of one-sided coupled Sylvester-type real quaternion matrix equations (\ref{sys02}) is consistent.\\$(2)$
\begin{align}\label{equhh047}
r\begin{pmatrix}C_{i}&A_{i}\\B_{i}&0\end{pmatrix}=r(A_{i})+r(B_{i}),\quad (i=1,2,3,4),
\end{align}
\begin{align}
r\begin{pmatrix}A_{2}C_{1}+C_{2}B_{1}&A_{2}A_{1}\\B_{2}B_{1}&0\end{pmatrix}=r(A_{2}A_{1})+r(B_{2}B_{1}),
\end{align}
\begin{align}
r\begin{pmatrix}C_{3}&C_{4}&A_{3}&A_{4}\\B_{3}&B_{4}&0&0\end{pmatrix}=r(A_{3},~A_{4})+r(B_{3},~B_{4}),
\end{align}
\begin{align}
r\begin{pmatrix}A_{3}C_{2}+C_{3}B_{2}&C_{4}&A_{3}A_{2}&A_{4}\\B_{3}B_{2}&B_{4}&0&0\end{pmatrix}=r(A_{3}A_{2},~A_{4})+r(B_{3}B_{2},~B_{4}),
\end{align}
\begin{align}
r\begin{pmatrix}A_{3}A_{2}C_{1}+A_{3}C_{2}B_{1}+C_{3}B_{2}B_{1}&A_{3}A_{2}A_{1}\\B_{3}B_{2}B_{1}&0\end{pmatrix}
=r(A_{3}A_{2}A_{1})
+r(B_{3}B_{2}B_{1}),
\end{align}
\begin{align}
r\begin{pmatrix}A_{3}C_{2}+C_{3}B_{2}&A_{3}A_{2}\\B_{3}B_{2}&0\end{pmatrix}=r(A_{3}A_{2})+r(B_{3}B_{2}),
\end{align}
\begin{align}\label{equhh413}
r\begin{pmatrix}A_{3}A_{2}C_{1}+A_{3}C_{2}B_{1}+C_{3}B_{2}B_{1}&C_{4}&A_{4}&A_{3}A_{2}A_{1}\\B_{3}B_{2}B_{1}&B_{4}&0&0\end{pmatrix}
=r(A_{3}A_{2}A_{1},~A_{4})+r(B_{3}B_{2}B_{1},~B_{4}).
\end{align}
$(3)$
\begin{align*}
R_{A_{1}}C_{1}L_{B_{1}}=0,~R_{A_{11}}C_{11}=0,~C_{11}L_{B_{11}}=0,
\end{align*}
\begin{align*}
R_{A_{3}}C_{3}L_{B_{3}}=0,~R_{B_{22}}D_{22}=0,~D_{22}L_{C_{22}}=0,
\end{align*}
\begin{align*}
R_{M}R_{A}E=0,~EL_{B}L_{N}=0,~R_{A}EL_{D}=0,~R_{C}EL_{B}=0.
\end{align*}

In this case, the general solution to the system of one-sided coupled Sylvester-type real quaternion matrix equations (\ref{sys02}) can be expressed as
\begin{align*}
X_{1}=A_{1}^{\dag}C_{1}+U_{1}B_{1}+L_{A_{1}}W_{1},~
X_{2}=-R_{A_{1}}C_{1}B_{1}^{\dag}+A_{1}U_{1}+V_{1}R_{B_{1}},
\end{align*}
\begin{align*}
X_{4}=-R_{A_{3}}C_{3}B_{3}^{\dag}+A_{3}U_{2}+V_{2}R_{B_{3}},
\end{align*}
\begin{align*}
X_{5}=A_{4}^{\dag}(C_{4}-R_{A_{3}}C_{3}B_{3}^{\dag}B_{4}+A_{3}U_{1}B_{4})+T_{4}A_{22}+L_{A_{4}}T_{6},
\end{align*}
\begin{align*}
X_{3}=-R_{(A_{2}A_{1})}(C_{2}+A_{2}R_{A_{1}}C_{1}B_{1}^{\dag}-A_{2}V_{1}R_{B_{1}})B_{2}^{\dag}+A_{2}A_{1}W_{4}+W_{5}R_{B_{2}},
\end{align*}
or
\begin{align*}
X_{3}=A_{3}^{\dag}C_{3}+U_{2}B_{3}+L_{A_{3}}T_{1},
\end{align*}
where
\begin{align*}
V_{1}=A_{11}^{\dag}C_{11}B_{11}^{\dag}+L_{A_{11}}W_{2}+W_{3}R_{B_{11}},
\end{align*}
\begin{align*}
U_{1}=(A_{2}A_{1})^{\dag}(C_{2}+A_{2}R_{A_{1}}C_{1}B_{1}^{\dag}-A_{2}V_{1}R_{B_{1}})+W_{4}B_{2}+L_{(A_{2}A_{1})}W_{6},
\end{align*}
\begin{align*}
U_{2}=B_{22}^{\dag}D_{22}C_{22}^{\dag}+L_{B_{22}}T_{2}+T_{3}R_{C_{22}},
\end{align*}
\begin{align*}
V_{2}=-R_{A_{4}}(C_{4}-R_{A_{3}}C_{3}B_{3}^{\dag}B_{4}+A_{3}U_{2}B_{4})A_{22}^{\dag}+A_{4}T_{4}+T_{5}R_{A_{22}},
\end{align*}
\begin{align*}
W_{4}=(I_{p_{1}},~0)[
A_{33}^{\dag}(E_{1}-A_{11}W_{3}A_{44}-B_{44}T_{2}B_{3})-A_{33}^{\dag}Z_{7}B_{33}+L_{A_{33}}Z_{6}],
\end{align*}
\begin{align*}
T_{1}=(0,~I_{p_{2}})[
A_{33}^{\dag}(E_{1}-A_{11}W_{3}A_{44}-B_{44}T_{2}B_{3})-A_{33}^{\dag}Z_{7}B_{33}+L_{A_{33}}Z_{6}],
\end{align*}
\begin{align*}
W_{5}=[R_{A_{33}}(E_{1}-A_{11}W_{3}A_{44}-B_{44}T_{2}B_{3})B_{33}^{\dag}
+A_{33}A_{33}^{\dag}Z_{7}+Z_{8}R_{B_{33}}]\begin{pmatrix}I_{p_{3}}\\0\end{pmatrix},
\end{align*}
\begin{align*}
T_{3}=[R_{A_{33}}(E_{1}-A_{11}W_{3}A_{44}-B_{44}T_{2}B_{3})B_{33}^{\dag}
+A_{33}A_{33}^{\dag}Z_{7}+Z_{8}R_{B_{33}}]\begin{pmatrix}0\\I_{p_{4}}\end{pmatrix},
\end{align*}
\begin{align*}
W_{3}=
A^{\dag}EB^{\dag}-A^{\dag}CM^{\dag}EB^{\dag}-A^{\dag}SC^{\dag
}EN^{\dag}DB^{\dag}-A^{\dag}SZ_{1}R_{N}DB^{\dag}+L_{A}Z_{2}+Z_{3}%
R_{B},
\end{align*}
\begin{align*}
T_{2}=M^{\dag}ED^{\dag}+S^{\dag}SC^{\dag}EN^{\dag}+L_{M}L_{S}Z_{4}%
+L_{M}Z_{1}R_{N}+Z_{5}R_{D},
\end{align*}
the remaining $W_{j},T_{j},Z_{j}$ are  arbitrary matrices over $\mathbb{H}$, $p_{1}$ and $p_{2}$ are the column numbers of $A_{1}$ and $A_{3}$, respectively, $p_{3}$ and $p_{4}$ are the row numbers of $B_{2}$ and $B_{4}$, respectively.

\end{theorem}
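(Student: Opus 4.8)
The plan is to follow the same three-step decoupling strategy used for \thmref{theorem01}. Write (\ref{sys02}) as the union of the two overlapping subsystems
\begin{align*}
\left\{\begin{array}{c} A_{1}X_{1}-X_{2}B_{1}=C_{1},\\ A_{2}X_{2}-X_{3}B_{2}=C_{2}, \end{array}\right.
\qquad\text{and}\qquad
\left\{\begin{array}{c} A_{3}X_{3}-X_{4}B_{3}=C_{3},\\ A_{4}X_{5}-X_{4}B_{4}=C_{4}, \end{array}\right.
\end{align*}
which share the variable $X_{3}$. The first has the form (\ref{bufensys01}), so \lemref{lemma02} supplies its consistency conditions $R_{A_{1}}C_{1}L_{B_{1}}=0$, $R_{A_{11}}C_{11}=0$, $C_{11}L_{B_{11}}=0$ and a parametrized general solution, in which $X_{3}=-R_{(A_{2}A_{1})}(C_{2}+A_{2}R_{A_{1}}C_{1}B_{1}^{\dag}-A_{2}V_{1}R_{B_{1}})B_{2}^{\dag}+A_{2}A_{1}W_{4}+W_{5}R_{B_{2}}$ with $V_{1}$ and the remaining pieces as in the theorem statement. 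After relabelling ($A_{1}\to A_{3}$, $B_{1}\to B_{3}$, $C_{1}\to C_{3}$, $A_{2}\to A_{4}$, $B_{2}\to B_{4}$, $C_{2}\to C_{4}$, with common variable $X_{4}$) the second subsystem has the form (\ref{bufensys02}), so \lemref{lemma01} applies with $A_{22},B_{22},C_{22},D_{22}$ in the auxiliary roles; this gives the conditions $R_{A_{3}}C_{3}L_{B_{3}}=0$, $R_{B_{22}}D_{22}=0$, $D_{22}L_{C_{22}}=0$ and a general solution with $X_{3}=A_{3}^{\dag}C_{3}+U_{2}B_{3}+L_{A_{3}}T_{1}$.

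The second step is to equate the two formulas for $X_{3}$. Substituting the expressions for $V_{1}$ and $U_{2}$ and moving the fully determined terms to one side, the identity reduces to a single linear matrix equation whose right-hand side is precisely $E_{1}$ and whose left-hand side is
\begin{align*}
A_{33}\begin{pmatrix}W_{4}\\T_{1}\end{pmatrix}+(W_{5},~T_{3})B_{33}+A_{11}W_{3}A_{44}+B_{44}T_{2}B_{3},
\end{align*}
which is of the type (\ref{equ28}) with $A_{1},B_{1},C_{3},D_{3},C_{4},D_{4}$ replaced by $A_{33},B_{33},A_{11},A_{44},B_{44},B_{3}$; the auxiliary matrices $A,B,C,D,E,M,N,S$ defined before the theorem are exactly those produced by \lemref{lemma03} for this equation. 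Invoking \lemref{lemma03} then yields the last block of conditions $R_{M}R_{A}E=0$, $EL_{B}L_{N}=0$, $R_{A}EL_{D}=0$, $R_{C}EL_{B}=0$, together with explicit formulas for $W_{4},T_{1},W_{5},T_{3}$ and for $W_{3},T_{2}$. Feeding these back through the two subsystem solutions produces the displayed general solution, the $(I_{p_{i}},0)$-type factors recording how the stacked unknowns are split; this establishes the equivalence $(1)\Leftrightarrow(3)$.

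For $(2)\Leftrightarrow(3)$ I would translate each projector identity into its rank form via \lemref{lemma04}. The six conditions coming from Lemmas \ref{lemma02} and \ref{lemma01} are, by the very statements of those lemmas, equivalent to (\ref{equhh047}) together with the $r(\,\cdot\,)=r(A_{2}A_{1})+r(B_{2}B_{1})$ and the $r(\,\cdot\,)=r(A_{3},~A_{4})+r(B_{3},~B_{4})$ equalities. For the four \lemref{lemma03}-conditions one argues exactly as in the closing computation of the proof of \thmref{theorem01}: e.g. $R_{M}R_{A}E=0\Leftrightarrow r(R_{A}E,~M)=r(M)\Leftrightarrow r(A,~C,~E)=r(A,~C)$, and then one substitutes the definitions of $A,C,E,A_{33},B_{33},A_{11},A_{44},B_{44},E_{1}$ and clears the projectors $L$ and $R$ by repeated use of \lemref{lemma04}(1)--(2) until the block matrix collapses to (\ref{equhh413}); the identities $EL_{B}L_{N}=0$, $R_{A}EL_{D}=0$, $R_{C}EL_{B}=0$ similarly yield the three remaining block rank equalities. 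The main obstacle is this last reduction: $E_{1}$ is a sum of five terms involving nested Moore--Penrose inverses, so the bookkeeping needed to check that products such as $A_{11}R_{A_{1}}C_{1}B_{1}^{\dag}B_{2}^{\dag}$ and $C_{11}B_{11}^{\dag}R_{B_{1}}B_{2}^{\dag}$ recombine into clean blocks with entries like $A_{3}A_{2}C_{1}+A_{3}C_{2}B_{1}+C_{3}B_{2}B_{1}$ is delicate; everything else is a faithful transcription of the argument used for \thmref{theorem01}.
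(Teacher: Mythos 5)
Your proposal is correct and matches the paper's own (very brief) proof: the paper likewise splits (\ref{sys02}) into the pair $\{A_{1}X_{1}-X_{2}B_{1}=C_{1},\,A_{2}X_{2}-X_{3}B_{2}=C_{2}\}$ handled by Lemma~\ref{lemma02} and the pair $\{A_{3}X_{3}-X_{4}B_{3}=C_{3},\,A_{4}X_{5}-X_{4}B_{4}=C_{4}\}$ handled by Lemma~\ref{lemma01}, equates the two expressions for $X_{3}$ to get an equation of type (\ref{equ28}) solved by Lemma~\ref{lemma03}, and converts the projector conditions to the rank equalities via Lemma~\ref{lemma04} exactly as in the proof of Theorem~\ref{theorem01}. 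Your identification of the auxiliary matrices ($A_{22},B_{22},C_{22},D_{22}$ from the relabelled Lemma~\ref{lemma01}, and the equation $A_{33}\left(\begin{smallmatrix}W_{4}\\T_{1}\end{smallmatrix}\right)+(W_{5},\,T_{3})B_{33}+A_{11}W_{3}A_{44}+B_{44}T_{2}B_{3}=E_{1}$) is exactly what the paper's definitions encode, so the proposal is a faithful, more detailed rendering of the same argument.
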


\begin{proof}
We separate this system of one-sided coupled Sylvester-type real quaternion matrix equations (\ref{sys02}) into two parts
\begin{align}
  \left\{\begin{array}{c}
A_{1}X_{1}-X_{2}B_{1}=C_{1},\\
A_{2}X_{2}-X_{3}B_{2}=C_{2},
\end{array}
  \right.
\end{align}
and
\begin{align}
  \left\{\begin{array}{c}
A_{3}X_{3}-X_{4}B_{3}=C_{3},\\
A_{4}X_{5}-X_{4}B_{4}=C_{4}.
\end{array}
  \right.
\end{align}
Applying the main idea of Theorem \ref{theorem01}, Lemma \ref{lemma01}, Lemma \ref{lemma02}, Lemma \ref{lemma03} and Lemma \ref{lemma04}, we can prove Theorem \ref{theorem02}.
\end{proof}
Now we give an example to illustrate Theorem \ref{theorem02}.

\begin{example}
Given the quaternion matrices:
\begin{align*}
A_{1}=\begin{pmatrix}\mathbf{i}&\mathbf{j}&1+\mathbf{k}\\
\mathbf{k}&\mathbf{i}+\mathbf{j}-2\mathbf{k}&-2+\mathbf{k}\\
1+\mathbf{i}+\mathbf{j}& 2-\mathbf{i}&-\mathbf{j}\end{pmatrix}, ~
B_{1}=\begin{pmatrix}1&\mathbf{k}&\mathbf{i}+\mathbf{k}\\
\mathbf{i}&\mathbf{k}&-1+\mathbf{k}\\
1+\mathbf{i}&2\mathbf{k}&-1+\mathbf{i}+2\mathbf{k}\end{pmatrix},
\end{align*}
\begin{align*}
A_{2}=\begin{pmatrix}1+\mathbf{i}+\mathbf{k}&1-\mathbf{i}&1+\mathbf{j}+\mathbf{k}\\
-1+\mathbf{i}-\mathbf{j}&1+\mathbf{i}&\mathbf{i}-\mathbf{j}+\mathbf{k}\\
2\mathbf{i}-\mathbf{j}+\mathbf{k}&2&1+\mathbf{i}+2\mathbf{k}\end{pmatrix},~
B_{2}=\begin{pmatrix}1+\mathbf{j}-\mathbf{k}&-1+2\mathbf{j}-\mathbf{k}&2\\
\mathbf{i}+\mathbf{j}&1+\mathbf{k}&1+\mathbf{i}-\mathbf{j}\\
1+\mathbf{i}+2\mathbf{j}-\mathbf{k}&2\mathbf{j}&3+\mathbf{i}-\mathbf{j}\end{pmatrix},
\end{align*}
\begin{align*}
A_{3}=\begin{pmatrix}2\mathbf{j}&1-\mathbf{j}&\mathbf{i}+\mathbf{k}\\
\mathbf{i}+\mathbf{j}+\mathbf{k}&1+\mathbf{j}&-1\\
\mathbf{i}+3\mathbf{j}+\mathbf{k}&2&-1+\mathbf{i}+\mathbf{k}\end{pmatrix},~
B_{3}=\begin{pmatrix}-1&1+\mathbf{j}&\mathbf{i}-\mathbf{j}\\
-\mathbf{i}&\mathbf{i}+2\mathbf{j}&-1-2\mathbf{j}\\
-1-\mathbf{i}&1+\mathbf{i}+\mathbf{k}&-1+\mathbf{i}-\mathbf{k}\end{pmatrix},
\end{align*}
\begin{align*}
A_{4}=\begin{pmatrix}2+\mathbf{i}+\mathbf{j}-2\mathbf{k}&1-2\mathbf{i}-\mathbf{j}&\mathbf{i}\\
1+\mathbf{j}-2\mathbf{k}&0&-\mathbf{i}+\mathbf{j}\\
3+\mathbf{i}&1-2\mathbf{i}+\mathbf{k}&0\end{pmatrix},~
B_{4}=\begin{pmatrix}\mathbf{j}&1-\mathbf{j}&\mathbf{i}+\mathbf{k}\\
\mathbf{i}&1+\mathbf{j}&-\mathbf{k}\\
\mathbf{i}+\mathbf{j}&2&\mathbf{i}\end{pmatrix},
\end{align*}
\begin{align*}
C_{1}=\begin{pmatrix}1+\mathbf{i}-\mathbf{j}-2\mathbf{k}&\mathbf{i}-2\mathbf{j}+6\mathbf{k}&1+\mathbf{i}-2\mathbf{j}+4\mathbf{k}\\
\mathbf{i}-4\mathbf{j}+3\mathbf{k}&3-2\mathbf{i}+3\mathbf{j}-6\mathbf{k}&1-\mathbf{i}-2\mathbf{j}+\mathbf{k}\\
4\mathbf{i}+\mathbf{j}-6\mathbf{k}&1-2\mathbf{i}-4\mathbf{j}+8\mathbf{k}&2-7\mathbf{j}+7\mathbf{k}\end{pmatrix},
\end{align*}
\begin{align*}
C_{2}=\begin{pmatrix}3-3\mathbf{i}+3\mathbf{j}+3\mathbf{k}&-2-\mathbf{i}+\mathbf{j}-9\mathbf{k}&-6-6\mathbf{i}-3\mathbf{j}+8\mathbf{k}\\
-2-8\mathbf{j}+8\mathbf{k}&\mathbf{i}+2\mathbf{j}&-4-6\mathbf{i}-5\mathbf{j}-6\mathbf{k}\\
1-3\mathbf{i}-5\mathbf{j}+11\mathbf{k}&-2+3\mathbf{j}-9\mathbf{k}&-10-12\mathbf{i}-8\mathbf{j}+2\mathbf{k}\end{pmatrix},
\end{align*}
\begin{align*}
C_{3}=\begin{pmatrix}3-4\mathbf{i}+4\mathbf{j}+3\mathbf{k}&-2+6\mathbf{i}+2\mathbf{j}-\mathbf{k}&3-\mathbf{i}-\mathbf{j}\\
4-\mathbf{i}+6\mathbf{j}-\mathbf{k}&-4-\mathbf{i}-4\mathbf{j}+10\mathbf{k}&-2-2\mathbf{i}+6\mathbf{j}-3\mathbf{k}\\
1-4\mathbf{i}+8\mathbf{j}+4\mathbf{k}&-1+8\mathbf{i}+3\mathbf{k}&3-\mathbf{i}+3\mathbf{j}-\mathbf{k}\end{pmatrix},
\end{align*}
\begin{align*}
C_{4}=\begin{pmatrix}2+\mathbf{i}-3\mathbf{k}&2+2\mathbf{j}+3\mathbf{k}&8-2\mathbf{k}\\
-1+2\mathbf{i}-3\mathbf{j}+\mathbf{k}&-3+6\mathbf{i}-4\mathbf{j}+\mathbf{k}&-1-3\mathbf{i}+3\mathbf{j}\\
1-3\mathbf{i}+5\mathbf{j}-3\mathbf{k}&1-3\mathbf{j}+2\mathbf{k}&4-2\mathbf{i}+\mathbf{j}+3\mathbf{k}\end{pmatrix}.
\end{align*}
Now we consider the system of one-sided coupled Sylvester-type real quaternion matrix equations (\ref{sys02}). Check that
\begin{align*}
r\begin{pmatrix}C_{i}&A_{i}\\B_{i}&0\end{pmatrix}=r(A_{i})+r(B_{i})=
\left\{\begin{array}{lll}
5, &\mbox{if}~
 i=1\\
3, & \mbox{if}~
 i=2\\
4, &\mbox{if}~
 i=3\\
5, & \mbox{if}~
 i=4
\end{array},
 \right.
\end{align*}
\begin{align*}
r\begin{pmatrix}A_{2}C_{1}+C_{2}B_{1}&A_{2}A_{1}\\B_{2}B_{1}&0\end{pmatrix}=r(A_{2}A_{1})+r(B_{2}B_{1})=3,
\end{align*}
\begin{align*}
r\begin{pmatrix}C_{3}&C_{4}&A_{3}&A_{4}\\B_{3}&B_{4}&0&0\end{pmatrix}=r(A_{3},~A_{4})+r(B_{3},~B_{4})=6,
\end{align*}
\begin{align*}
r\begin{pmatrix}A_{3}C_{2}+C_{3}B_{2}&C_{4}&A_{3}A_{2}&A_{4}\\B_{3}B_{2}&B_{4}&0&0\end{pmatrix}
=r(A_{3}A_{2},~A_{4})+r(B_{3}B_{2},~B_{4})=6,
\end{align*}
\begin{align*}
r\begin{pmatrix}A_{3}A_{2}C_{1}+A_{3}C_{2}B_{1}+C_{3}B_{2}B_{1}&A_{3}A_{2}A_{1}\\B_{3}B_{2}B_{1}&0\end{pmatrix}
=r(A_{3}A_{2}A_{1})
+r(B_{3}B_{2}B_{1})=3,
\end{align*}
\begin{align*}
r\begin{pmatrix}A_{3}C_{2}+C_{3}B_{2}&A_{3}A_{2}\\B_{3}B_{2}&0\end{pmatrix}=r(A_{3}A_{2})+r(B_{3}B_{2})=3,
\end{align*}
\begin{align*}
r\begin{pmatrix}A_{3}A_{2}C_{1}+A_{3}C_{2}B_{1}+C_{3}B_{2}B_{1}&C_{4}&A_{4}&A_{3}A_{2}A_{1}\\B_{3}B_{2}B_{1}&B_{4}&0&0\end{pmatrix}
=r(A_{3}A_{2}A_{1},~A_{4})+r(B_{3}B_{2}B_{1},~B_{4})=6.
\end{align*}
All the rank equalities in (\ref{equhh047})-(\ref{equhh413}) hold. Hence, the system of one-sided coupled Sylvester-type real quaternion matrix equations (\ref{sys02}) is consistent. Note that
\begin{align*}
X_{1}=\begin{pmatrix}1-\mathbf{k}&\mathbf{i}+\mathbf{j}+2\mathbf{k}&\mathbf{k}\\
-1+\mathbf{j}&-\mathbf{i}-2\mathbf{j}+\mathbf{k}&-\mathbf{j}\\
\mathbf{j}-\mathbf{k}&-\mathbf{j}+3\mathbf{k}&-\mathbf{j}+\mathbf{k}\end{pmatrix}
~
X_{2}=\begin{pmatrix}1+\mathbf{j}&\mathbf{i}&-1+\mathbf{k}\\
\mathbf{i}+\mathbf{j}+\mathbf{k}&-1&-\mathbf{i}-\mathbf{j}+\mathbf{k}\\
1+\mathbf{i}+2\mathbf{j}+\mathbf{k}&-1+\mathbf{i}&-1-\mathbf{i}-\mathbf{j}+2\mathbf{k}\end{pmatrix},
\end{align*}
\begin{align*}
X_{3}=\begin{pmatrix}-2\mathbf{k}&2+\mathbf{k}&\mathbf{i}\\
\mathbf{i}+2\mathbf{j}&1-\mathbf{j}&1-\mathbf{i}\\
\mathbf{i}+2\mathbf{j}-2\mathbf{k}&3-\mathbf{j}+\mathbf{k}&1\end{pmatrix}
~
X_{4}=\begin{pmatrix}\mathbf{k}&\mathbf{i}&1-\mathbf{k}\\
1-2\mathbf{i}+\mathbf{j}-\mathbf{k}&1-3\mathbf{i}&1+\mathbf{i}+2\mathbf{j}+\mathbf{k}\\
-1&2+\mathbf{k}&\mathbf{i}\end{pmatrix},
\end{align*}and
\begin{align*}
X_{5}=\begin{pmatrix}\mathbf{i}+\mathbf{j}&\mathbf{k}&1+\mathbf{k}\\
1+2\mathbf{j}&\mathbf{i}&1+\mathbf{j}\\
\mathbf{j}+\mathbf{k}&\mathbf{k}&1+2\mathbf{j}\end{pmatrix}
\end{align*}
satisfy the system (\ref{sys02}).

\end{example}

Let $A_{4},B_{4},$ and $C_{4}$ vanish in Theorem \ref{theorem02}. Then we can obtain some necessary and sufficient conditions and general solution to the system of coupled generalized  Sylvester real quaternion matrix
equations (\ref{specialsys02}).

\begin{corollary}\cite{auto001}
Let $A_{i},B_{i},$ and $C_{i}(i=1,2,3)$ be given. Set
\begin{align*}
&A=R_{(A_{2}A_{1})}A_{2},B=R_{B_{1}}L_{(B_{3}B_{2})},C=R_{(A_{2}A_{1})}L_{A_{3}},\\&D=B_{2}L_{(B_{3}B_{2})},M=R_{A}C,N=DL_{B},S=CL_{M},\\&
C_{4}=C_{2}+A_{3}^{\dag}C_{3}B_{2}+A_{2}R_{A_{1}}C_{1}B_{1}^{\dag},E=R_{(A_{2}A_{1})}C_{4}L_{(B_{3}B_{2})}.
\end{align*}
Then the following statements are equivalent:\\
$(1)$ The system of coupled generalized  Sylvester real quaternion matrix
equations (\ref{specialsys02}) is consistent.\\
$(2)$
\begin{align*}
&r\begin{pmatrix}C_{i}&A_{i}\\B_{i}&0\end{pmatrix}=r(A_{i})+r(B_{i}),(i=1,2,3),
\\&
r\begin{pmatrix}A_{3}C_{2}+C_{3}B_{2}&A_{3}A_{2}\\B_{3}B_{2}&0\end{pmatrix}=r(A_{3}A_{2})
+r(B_{3}B_{2}), \\&
r\begin{pmatrix}A_{2}C_{1}+C_{2}B_{1}&A_{2}A_{1}\\B_{2}B_{1}&0\end{pmatrix}=r(A_{2}A_{1})
+r(B_{2}B_{1}),
\\
&r\begin{pmatrix}A_{3}A_{2}C_{1}+A_{3}C_{2}B_{1}+C_{3}B_{2}B_{1}&A_{3}A_{2}A_{1}\\B_{3}B_{2}B_{1}&0\end{pmatrix}\\&=r(A_{3}A_{2}A_{1})
+r(B_{3}B_{2}B_{1}).
\end{align*}

$(3)$
\begin{align*}
&R_{A_{i}}C_{i}L_{B_{i}}=0,(i=1,2),R_{M}R_{A}E=0,\\&
EL_{B}L_{N}=0, R_{A}EL_{D}=0,  R_{C}EL_{B}=0.
\end{align*}

In this case, the general solution to the coupled generalized  Sylvester real quaternion matrix
equations (\ref{specialsys02}) can be expressed as
\begin{align*}
X=A_{1}^{\dag}C_{1}+U_{1}B_{1}+L_{A_{1}}U_{2},~
Y=-R_{A_{1}}C_{1}B_{1}^{\dag}+A_{1}U_{1}+U_{3}R_{B_{1}},
\end{align*}
\begin{align*}
Z=A_{3}^{\dag}C_{3}-V_{1}B_{3}-L_{A_{3}}V_{2},~
W=-R_{A_{3}}C_{3}B_{3}^{\dag}-A_{3}V_{1}-V_{3}R_{B_{3}},
\end{align*}
where
\begin{align*}
  U_{1}=(A_{2}A_{1})^{\dag}(C_{4}-A_{2}U_{3}R_{B_{1}}-L_{A_{3}}V_{2}B_{2})-(A_{2}A_{1})^{\dag}%
T_{7}(B_{3}B_{2})+L_{(A_{2}A_{1})}T_{6},
\end{align*}
\begin{align*}
  V_{1}=R_{(A_{2}A_{1})}(C_{4}-A_{2}U_{3}R_{B_{1}}-L_{A_{3}}V_{2}B_{2})(B_{3}B_{2})^{\dag}%
+(A_{2}A_{1})(A_{2}A_{1})^{\dag}T_{7}+T_{8}R_{(B_{3}B_{2})},
\end{align*}
\begin{align*}
U_{3}=A^{\dag}EB^{\dag}-A^{\dag}CM^{\dag}EB^{\dag}-A^{\dag}SC^{\dag
}EN^{\dag}DB^{\dag} -A^{\dag}ST_{2}R_{N}DB^{\dag}+L_{A}T_{4}+T_{5}%
R_{B},
\end{align*}
\begin{align*}
 V_{2}=M^{\dag}ED^{\dag}+S^{\dag}SC^{\dag}EN^{\dag}+L_{M}L_{S}T_{1}%
+L_{M}T_{2}R_{N}+T_{3}R_{D},
\end{align*}
and $U_{2},V_{3},T_{1}, \ldots, T_{8}$ are arbitrary matrices over
$\mathbb{H}$ with appropriate sizes.
\end{corollary}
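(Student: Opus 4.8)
The plan is to obtain the corollary as the degenerate case of Theorem~\ref{theorem02} in which the data $A_4,B_4,C_4$ of system~(\ref{sys02}) are all zero. First I would observe that under $A_4=B_4=C_4=0$ the fourth equation $A_4X_5-X_4B_4=C_4$ becomes $0=0$; it imposes no constraint and leaves $X_5$ completely free. Thus (\ref{sys02}) collapses to the three coupled equations (\ref{specialsys02}) under the renaming $X_1=X,\ X_2=Y,\ X_3=Z,\ X_4=W$, with $X_5$ discarded, so Theorem~\ref{theorem02} applies once its auxiliary quantities are reduced.

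The second step is to feed the vanishing data into the auxiliary matrices of Theorem~\ref{theorem02}. Since $A_4=0$ gives $R_{A_4}=I$ and $B_4=0$ gives $R_{B_3}B_4=0$, one obtains
\begin{align*}
A_{22}=0,\quad L_{A_{22}}=I,\quad C_{22}=B_4L_{A_{22}}=0,\quad R_{C_{22}}=I,\quad D_{22}=0,\quad B_{22}=R_{A_4}A_3=A_3,
\end{align*}
and hence $B_{44}=-L_{B_{22}}=-L_{A_3}$, while the term $B_{22}^{\dag}D_{22}C_{22}^{\dag}B_3$ drops out of $E_1$. In particular the two solvability conditions $R_{B_{22}}D_{22}=0$ and $D_{22}L_{C_{22}}=0$ of Theorem~\ref{theorem02} reduce to $0=0$: the bottom block of (\ref{sys02}) has degenerated to the single generalized Sylvester equation $A_3Z-WB_3=C_3$, whose consistency is exactly $R_{A_3}C_3L_{B_3}=0$ and whose general solution retains two free inner blocks.

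Next I would collapse the rank list (\ref{equhh047})--(\ref{equhh413}). The case $i=4$ of (\ref{equhh047}) reads $0=0$; the block equality carrying the columns $C_4,A_4,B_4$ loses them and merely repeats the $i=3$ case; the two equalities built on $A_3A_2$ coincide after the deletion, as do the two triple-product equalities built on $A_3A_2A_1$. What survives is precisely the four rank conditions stated in part~(2) of the corollary: the Roth conditions for $i=1,2,3$, together with the equalities on $(A_2A_1,B_2B_1)$, on $(A_3A_2,B_3B_2)$, and on $(A_3A_2A_1,B_3B_2B_1)$. Because the rank formulation of consistency is independent of the chosen decomposition, this already yields the equivalence $(1)\Leftrightarrow(2)$ of the corollary directly from Theorem~\ref{theorem02}.

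The step I expect to be the main obstacle is producing part~(3) and the solution formulas in the corollary's stated form. The corollary is written through its own auxiliary matrices $A=R_{(A_2A_1)}A_2$, $B=R_{B_1}L_{(B_3B_2)}$, $C=R_{(A_2A_1)}L_{A_3}$, $D=B_2L_{(B_3B_2)}$, $E=R_{(A_2A_1)}C_4L_{(B_3B_2)}$, which chain equations two and three through the product $B_3B_2$; these are not the verbatim specializations of the theorem's $A_{33},A_{44},B_{33},B_{44},E_1$, so a literal substitution does not suffice. I would instead re-run the three-step method of Theorem~\ref{theorem02} on the degenerate system: solve the chain $A_1X-YB_1=C_1,\ A_2Y-ZB_2=C_2$ by Lemma~\ref{lemma02}, solve the lone equation $A_3Z-WB_3=C_3$ directly, equate the two resulting expressions for the shared variable $Z$ to get a single equation solvable by Lemma~\ref{lemma03}, and translate its four generalized-inverse conditions into ranks by Lemma~\ref{lemma04}. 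Since both the specialized theorem conditions and the corollary's are equivalent to the common rank conditions of part~(2), they are equivalent, and the expressions for $X,Y,Z,W$ follow after absorbing the sign choices and the free parameters attached to the vanished blocks into the arbitrary matrices $U_2,V_3,T_1,\dots,T_8$. This last reconciliation is routine but lengthy generalized-inverse bookkeeping, and it is the only place where genuine computation is required.
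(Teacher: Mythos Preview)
Your proposal is correct and follows exactly the route the paper takes: the corollary is obtained by letting $A_{4},B_{4},C_{4}$ vanish in Theorem~\ref{theorem02}, whereupon the fourth equation becomes vacuous and the remaining conditions and formulas collapse to those for system~(\ref{specialsys02}). The paper says nothing more than this one sentence, so your detailed tracking of how the auxiliary matrices and rank equalities degenerate, and your remark that the corollary's auxiliary blocks $A,B,C,D,E$ are a repackaging rather than a verbatim specialization (hence the need for the bookkeeping or a direct re-run of the three-step scheme), actually goes beyond what the paper spells out; but the underlying approach is the same.
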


\section{\textbf{Some solvability conditions and the general solution to
system (\ref{sys03})}}

In this section, we consider the solvability conditions and the general solution to the
system of one-sided coupled Sylvester-type real quaternion matrix equations (\ref{sys03}). For simplicity, put
\begin{align*}
A_{11}=R_{(A_{2}A_{1})}A_{2},~B_{11}=R_{B_{1}}L_{B_{2}},~C_{11}=R_{(A_{2}A_{1})}(A_{2}R_{A_{1}}C_{1}B_{1}^{\dag}+C_{2})L_{B_{2}},
\end{align*}
\begin{align*}
A_{22}=R_{(A_{3}A_{4})}A_{3},~B_{22}=R_{B_{4}}L_{B_{3}},~C_{22}=R_{(A_{3}A_{4})}(A_{3}R_{A_{4}}C_{4}B_{4}^{\dag}+C_{3})L_{B_{3}},
\end{align*}
\begin{align*}
A_{33}=(A_{2}A_{1},~-A_{3}A_{4}),~B_{33}=\begin{pmatrix}R_{B_{2}}\\-R_{B_{3}}\end{pmatrix},
~A_{44}=R_{B_{11}}R_{B_{1}}B_{2}^{\dag},~B_{44}=R_{B_{22}}R_{B_{4}}B_{3}^{\dag},
\end{align*}
\begin{align*}
E_{1}=&R_{(A_{2}A_{1})}C_{2}B_{2}^{\dag}+A_{11}R_{A_{1}}C_{1}B_{1}^{\dag}B_{2}^{\dag}-C_{11}B_{11}^{\dag}R_{B_{1}}B_{2}^{\dag}-\\&
R_{(A_{3}A_{4})}C_{3}B_{3}^{\dag}-A_{22}R_{A_{4}}C_{4}B_{4}^{\dag}B_{2}^{\dag}+C_{22}B_{22}^{\dag}R_{B_{4}}B_{3}^{\dag},
\end{align*}
\begin{align*}
A=R_{A_{33}}A_{11},~B=A_{44}L_{B_{33}},~C=-R_{A_{33}}A_{22},~D=B_{44}L_{B_{33}},
\end{align*}
\begin{align*}
E=R_{A_{33}}E_{1}L_{B_{33}},M=R_{A}C,N=DL_{B},S=CL_{M}.
\end{align*}

Now we give the fundamental theorem of this section.
\begin{theorem}\label{theorem03}
Let $A_{i},B_{i},$ and $C_{i}(i=1,2,3,4)$ be given. Then the following statements are equivalent:\\$(1)$ The system of one-sided coupled Sylvester-type real quaternion matrix equations (\ref{sys03}) is consistent.\\$(2)$
\begin{align}\label{equhh056}
r\begin{pmatrix}C_{i}&A_{i}\\B_{i}&0\end{pmatrix}=r(A_{i})+r(B_{i}),\quad (i=1,2,3,4),
\end{align}
\begin{align}
r\begin{pmatrix}A_{2}C_{1}+C_{2}B_{1}&A_{2}A_{1}\\B_{2}B_{1}&0\end{pmatrix}=r(A_{2}A_{1})+r(B_{2}B_{1}),
\end{align}
\begin{align}
r\begin{pmatrix}A_{3}C_{4}+C_{3}B_{4}&A_{3}A_{4}\\B_{3}B_{4}&0\end{pmatrix}=r(A_{3}A_{4})+r(B_{3}B_{4}),
\end{align}
\begin{align}
r\begin{pmatrix}C_{2}&C_{3}&A_{2}&A_{3}\\B_{2}&B_{3}&0&0\end{pmatrix}=r(A_{2},~A_{3})+r(B_{2},~B_{3}),
\end{align}
\begin{align}
r\begin{pmatrix}A_{2}C_{1}+C_{2}B_{1}&A_{3}C_{4}+C_{3}B_{4}&A_{2}A_{1}&A_{3}A_{4}\\B_{2}B_{1}&B_{3}B_{4}&0&0\end{pmatrix}
=r(A_{2}A_{1},~A_{3}A_{4})+r(B_{2}B_{1},~B_{3}B_{4}),
\end{align}
\begin{align}
r\begin{pmatrix}C_{2}&A_{3}C_{4}+C_{3}B_{4}&A_{2}&A_{3}A_{4}\\B_{2}&B_{3}B_{4}&0&0\end{pmatrix}
=r(A_{2},~A_{3}A_{4})+r(B_{2},~B_{3}B_{4}),
\end{align}
\begin{align}\label{equhh512}
r\begin{pmatrix}A_{2}C_{1}+C_{2}B_{1}&C_{3}&A_{2}A_{1}&A_{3}\\B_{2}B_{1}&B_{3}&0&0\end{pmatrix}
=r(A_{2}A_{1},~A_{3})+r(B_{2}B_{1},~B_{3}).
\end{align}

$(3)$
\begin{align*}
R_{A_{1}}C_{1}L_{B_{1}}=0,~R_{A_{11}}C_{11}=0,~C_{11}L_{B_{11}}=0,
\end{align*}
\begin{align*}
R_{A_{4}}C_{4}L_{B_{4}}=0,~R_{A_{22}}C_{22}=0,~C_{22}L_{B_{22}}=0,
\end{align*}
\begin{align*}
R_{M}R_{A}E=0,~EL_{B}L_{N}=0,~R_{A}EL_{D}=0,~R_{C}EL_{B}=0.
\end{align*}

In this case, the general solution to the system of one-sided coupled Sylvester-type real quaternion matrix equations (\ref{sys03}) can be expressed as
\begin{align*}
X_{1}=A_{1}^{\dag}C_{1}+U_{1}B_{1}+L_{A_{1}}W_{1},~
X_{2}=-R_{A_{1}}C_{1}B_{1}^{\dag}+A_{1}U_{1}+V_{1}R_{B_{1}},
\end{align*}
\begin{align*}
X_{4}=-R_{A_{4}}C_{4}B_{4}^{\dag}+A_{4}U_{2}+V_{2}R_{B_{4}},
~
X_{5}=A_{4}^{\dag}C_{4}+U_{2}B_{4}+L_{A_{4}}T_{1},
\end{align*}
\begin{align*}
X_{3}=-R_{(A_{2}A_{1})}(C_{2}+A_{2}R_{A_{1}}C_{1}B_{1}^{\dag}-A_{2}V_{1}R_{B_{1}})B_{2}^{\dag}+A_{2}A_{1}W_{4}+W_{5}R_{B_{2}},
\end{align*}
or
\begin{align*}
X_{3}=-R_{(A_{3}A_{4})}(C_{3}+A_{3}R_{A_{4}}C_{4}B_{4}^{\dag}-A_{3}V_{2}R_{B_{4}})B_{3}^{\dag}+A_{3}A_{4}T_{4}+T_{5}R_{B_{3}},
\end{align*}
where
\begin{align*}
V_{1}=A_{11}^{\dag}C_{11}B_{11}^{\dag}+L_{A_{11}}W_{2}+W_{3}R_{B_{11}},
\end{align*}
\begin{align*}
U_{1}=(A_{2}A_{1})^{\dag}(C_{2}+A_{2}R_{A_{1}}C_{1}B_{1}^{\dag}-A_{2}V_{1}R_{B_{1}})+W_{4}B_{2}+L_{(A_{2}A_{1})}W_{6},
\end{align*}
\begin{align*}
V_{2}=A_{22}^{\dag}C_{22}B_{22}^{\dag}+L_{A_{22}}T_{2}+T_{3}R_{B_{22}},
\end{align*}
\begin{align*}
U_{2}=(A_{3}A_{4})^{\dag}(C_{3}+A_{3}R_{A_{4}}C_{4}B_{4}^{\dag}-A_{3}V_{2}R_{B_{4}})+T_{4}B_{3}+L_{(A_{3}A_{4})}T_{6},
\end{align*}
\begin{align*}
W_{4}=(I_{p_{1}},~0)[
A_{33}^{\dag}(E_{1}-A_{11}W_{3}A_{44}+A_{22}T_{3}B_{44})-A_{33}^{\dag}Z_{7}B_{33}+L_{A_{33}}Z_{6}],
\end{align*}
\begin{align*}
T_{4}=(0,~I_{p_{2}})[
A_{33}^{\dag}(E_{1}-A_{11}W_{3}A_{44}+A_{22}T_{3}B_{44})-A_{33}^{\dag}Z_{7}B_{33}+L_{A_{33}}Z_{6}],
\end{align*}
\begin{align*}
W_{5}=[R_{A_{33}}(E_{1}-A_{11}W_{3}A_{44}+A_{22}T_{3}B_{44})B_{33}^{\dag}
+A_{33}A_{33}^{\dag}Z_{7}+Z_{8}R_{B_{33}}]\begin{pmatrix}I_{p_{3}}\\0\end{pmatrix},
\end{align*}
\begin{align*}
T_{5}=[R_{A_{33}}(E_{1}-A_{11}W_{3}A_{44}+A_{22}T_{3}B_{44})B_{33}^{\dag}
+A_{33}A_{33}^{\dag}Z_{7}+Z_{8}R_{B_{33}}]\begin{pmatrix}0\\I_{p_{4}}\end{pmatrix},
\end{align*}
\begin{align*}
W_{3}=
A^{\dag}EB^{\dag}-A^{\dag}CM^{\dag}EB^{\dag}-A^{\dag}SC^{\dag
}EN^{\dag}DB^{\dag}-A^{\dag}SZ_{1}R_{N}DB^{\dag}+L_{A}Z_{2}+Z_{3}%
R_{B},
\end{align*}
\begin{align*}
T_{3}=M^{\dag}ED^{\dag}+S^{\dag}SC^{\dag}EN^{\dag}+L_{M}L_{S}Z_{4}%
+L_{M}Z_{1}R_{N}+Z_{5}R_{D},
\end{align*}
the remaining $W_{j},T_{j},Z_{j}$ are  arbitrary matrices over $\mathbb{H}$, $p_{1}$ and $p_{2}$ are the column numbers of $A_{1}$ and $A_{4}$, respectively, $p_{3}$ and $p_{4}$ are the row numbers of $B_{2}$ and $B_{3}$, respectively.

\end{theorem}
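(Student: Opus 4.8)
The plan is to follow the three-step reduction employed in the proof of Theorem \ref{theorem01}, this time built around equation (\ref{equ28}) and Lemma \ref{lemma03}. First I would split system (\ref{sys03}) into the two subsystems
\begin{align*}
\left\{\begin{array}{c}A_{1}X_{1}-X_{2}B_{1}=C_{1}\\A_{2}X_{2}-X_{3}B_{2}=C_{2}\end{array}\right.
\qquad\text{and}\qquad
\left\{\begin{array}{c}A_{4}X_{5}-X_{4}B_{4}=C_{4}\\A_{3}X_{4}-X_{3}B_{3}=C_{3}\end{array}\right.,
\end{align*}
which share only the unknown $X_{3}$. The first is exactly (\ref{bufensys01}); the second becomes (\ref{bufensys01}) under the relabelling $(A_{1},B_{1},C_{1},X_{1})\mapsto(A_{4},B_{4},C_{4},X_{5})$ and $(A_{2},B_{2},C_{2},X_{2})\mapsto(A_{3},B_{3},C_{3},X_{4})$ (keeping $X_{3}$). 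Applying Lemma \ref{lemma02} twice, the first subsystem is consistent iff $R_{A_{i}}C_{i}L_{B_{i}}=0$ ($i=1,2$), $R_{A_{11}}C_{11}=0$, $C_{11}L_{B_{11}}=0$, with a general solution giving $X_{1},X_{2},X_{3}$ through $V_{1},U_{1}$ and free matrices $W_{1},\dots,W_{6}$; the second is consistent iff $R_{A_{i}}C_{i}L_{B_{i}}=0$ ($i=3,4$), $R_{A_{22}}C_{22}=0$, $C_{22}L_{B_{22}}=0$, with a general solution giving $X_{5},X_{4},X_{3}$ through $V_{2},U_{2}$ and free matrices $T_{1},\dots,T_{6}$, where $A_{11},B_{11},C_{11},A_{22},B_{22},C_{22}$ are the matrices defined before the theorem.

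The second step is to glue the two solutions along $X_{3}$. Equating the two expressions for $X_{3}$, substituting $V_{1}=A_{11}^{\dag}C_{11}B_{11}^{\dag}+L_{A_{11}}W_{2}+W_{3}R_{B_{11}}$ and $V_{2}=A_{22}^{\dag}C_{22}B_{22}^{\dag}+L_{A_{22}}T_{2}+T_{3}R_{B_{22}}$, and using $R_{(A_{2}A_{1})}A_{2}=A_{11}$, $A_{11}L_{A_{11}}=0$, $A_{11}A_{11}^{\dag}C_{11}=C_{11}$ (valid since $R_{A_{11}}C_{11}=0$ already holds) together with the mirror identities for the second subsystem, all terms carrying $W_{2},T_{2}$ alone or depending only on the data collect into $E_{1}$, and the identity telescopes to
\begin{align*}
A_{33}\begin{pmatrix}W_{4}\\T_{4}\end{pmatrix}+(W_{5},~T_{5})B_{33}+A_{11}W_{3}A_{44}-A_{22}T_{3}B_{44}=E_{1},
\end{align*}
with $A_{33},B_{33},A_{44},B_{44},E_{1}$ as in the statement. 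This is equation (\ref{equ28}) under the dictionary $A_{1}\mapsto A_{33}$, $B_{1}\mapsto B_{33}$, $C_{3}\mapsto A_{11}$, $D_{3}\mapsto A_{44}$, $C_{4}\mapsto-A_{22}$, $D_{4}\mapsto B_{44}$, $E_{1}\mapsto E_{1}$, and one checks that the auxiliary matrices $A,B,C,D,E,M,N,S$ that Lemma \ref{lemma03} builds from these data coincide with those listed before the theorem. Lemma \ref{lemma03} then yields consistency of (\ref{equ28}) iff $R_{M}R_{A}E=0$, $EL_{B}L_{N}=0$, $R_{A}EL_{D}=0$, $R_{C}EL_{B}=0$, together with explicit formulas for $\begin{pmatrix}W_{4}\\T_{4}\end{pmatrix}$, $(W_{5},T_{5})$, $W_{3}$, $T_{3}$ in terms of new free matrices $Z_{1},\dots,Z_{8}$; reading off the block components of the first two and back-substituting into the subsystem solutions produces the stated general solution for $X_{1},\dots,X_{5}$ (the two displayed forms of $X_{3}$ coming from the two subsystems), and consistency of (\ref{sys03}) reduces to the two subsystem conditions plus these four conditions, which is statement (3).

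It remains to prove (2)$\Leftrightarrow$(3). The equalities (\ref{equhh056}) are $R_{A_{i}}C_{i}L_{B_{i}}=0$ rewritten via Lemma \ref{lemma04}; the two rank equalities involving $A_{2}A_{1},B_{2}B_{1}$ and $A_{3}A_{4},B_{3}B_{4}$ are, up to a permutation of block columns, the Lemma \ref{lemma02} conditions for the two subsystems; and the last four equalities of (2), ending with (\ref{equhh512}), are the rank translations of $R_{M}R_{A}E=0$, $EL_{B}L_{N}=0$, $R_{A}EL_{D}=0$, $R_{C}EL_{B}=0$, obtained exactly as in the proof of Theorem \ref{theorem01}: one repeatedly applies Lemma \ref{lemma04} to peel off the outer projectors $R_{A_{33}},L_{B_{33}}$ and then $R_{B_{11}},L_{B_{22}}$ and the remaining Moore-Penrose factors, trading each $L$ or $R$ for an identity block bordered by the corresponding matrix and simplifying the resulting block matrix.

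The main obstacle is the second step: verifying that, after inserting the parametrized $V_{1},V_{2}$ and cancelling, the two expressions for $X_{3}$ really collapse to equation (\ref{equ28}) with exactly the auxiliary matrices $A_{33},B_{33},A_{44},B_{44},E_{1}$ defined before the theorem, since that is where all the Moore-Penrose bookkeeping lives. The rank manipulations in the last step, though lengthy, are routine once the template of Theorem \ref{theorem01} is available.
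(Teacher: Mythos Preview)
Your proposal is correct and follows essentially the same approach as the paper: split (\ref{sys03}) into the two subsystems sharing $X_{3}$, solve each via Lemma \ref{lemma02}, equate the two parametrizations of $X_{3}$ to obtain an equation of type (\ref{equ28}), apply Lemma \ref{lemma03}, and translate the resulting projector conditions into rank equalities via Lemma \ref{lemma04}. The paper's own proof is only a two-line sketch invoking the same lemmas and the template of Theorem \ref{theorem01}, so your write-up is in fact considerably more detailed than what appears there; one small slip is that Lemma \ref{lemma02}'s condition (3) gives only $R_{A_{1}}C_{1}L_{B_{1}}=0$ (not also $i=2$), matching the theorem's statement (3), though the extra condition you list is of course implied.
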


\begin{proof}
We separate this system of one-sided coupled Sylvester-type real quaternion matrix equations (\ref{sys03}) into two parts
\begin{align}
  \left\{\begin{array}{c}
A_{1}X_{1}-X_{2}B_{1}=C_{1},\\
A_{2}X_{2}-X_{3}B_{2}=C_{2},
\end{array}
  \right.
\end{align}
and
\begin{align}
  \left\{\begin{array}{c}
A_{3}X_{4}-X_{3}B_{3}=C_{3},\\
A_{4}X_{5}-X_{4}B_{4}=C_{4}.
\end{array}
  \right.
\end{align}
Applying the main idea of Theorem \ref{theorem01}, Lemma \ref{lemma01}, Lemma \ref{lemma02}, Lemma \ref{lemma03} and Lemma \ref{lemma04}, we can prove Theorem \ref{theorem03}.
\end{proof}

Now we give an example to illustrate Theorem \ref{theorem03}.
\begin{example}
Given the quaternion matrices:
\begin{align*}
A_{1}=\begin{pmatrix}\mathbf{i}+\mathbf{j}+\mathbf{k}&2+\mathbf{i}+\mathbf{j}-\mathbf{k}\\
-1+\mathbf{j}+\mathbf{k}&-1+2\mathbf{i}+\mathbf{j}-\mathbf{k}\end{pmatrix}, ~
B_{1}=\begin{pmatrix}1+\mathbf{k}&\mathbf{j}-\mathbf{k}\\
\mathbf{i}+2\mathbf{k}&2\mathbf{j}-2\mathbf{k}\end{pmatrix},
\end{align*}
\begin{align*}
A_{2}=\begin{pmatrix}\mathbf{i}&2+\mathbf{j}\\
1+\mathbf{i}+\mathbf{k}&-\mathbf{j}\end{pmatrix},~
B_{2}=\begin{pmatrix}1+\mathbf{j}+2\mathbf{k}&\mathbf{i}+3\mathbf{k}\\
\mathbf{j}&1+\mathbf{i}\end{pmatrix},
\end{align*}
\begin{align*}
A_{3}=\begin{pmatrix}1+\mathbf{k}&\mathbf{i}+\mathbf{k}\\
1+\mathbf{i}+\mathbf{j}+\mathbf{k}&-1+\mathbf{i}+\mathbf{j}+\mathbf{k}\end{pmatrix},~
B_{3}=\begin{pmatrix}2+\mathbf{k}&-\mathbf{i}-2\mathbf{j}\\
2-2\mathbf{i}-\mathbf{k}&-1-\mathbf{i}+2\mathbf{j}\end{pmatrix},
\end{align*}
\begin{align*}
A_{4}=\begin{pmatrix}3\mathbf{i}+\mathbf{j}&1+2\mathbf{j}\\
2+\mathbf{k}&0\end{pmatrix},~
B_{4}=\begin{pmatrix}-\mathbf{j}&1+2\mathbf{j}\\
-\mathbf{k}&\mathbf{i}+2\mathbf{k}\end{pmatrix},
\end{align*}
\begin{align*}
C_{1}=\begin{pmatrix}1-5\mathbf{i}+\mathbf{j}+\mathbf{k}&3+8\mathbf{i}-7\mathbf{j}-3\mathbf{k}\\
3+2\mathbf{i}-2\mathbf{j}-7\mathbf{k}&-8+3\mathbf{i}-4\mathbf{j}-4\mathbf{k}\end{pmatrix},
~
C_{2}=\begin{pmatrix}-1+\mathbf{i}+3\mathbf{j}+2\mathbf{k}&5+3\mathbf{i}+3\mathbf{j}+7\mathbf{k}\\
1+2\mathbf{i}+4\mathbf{j}+5\mathbf{k}&-3\mathbf{i}-2\mathbf{j}+5\mathbf{k}\end{pmatrix},
\end{align*}
\begin{align*}
C_{3}=\begin{pmatrix}-7\mathbf{i}-2\mathbf{j}-2\mathbf{k}&-3-7\mathbf{i}+4\mathbf{j}+5\mathbf{k}\\
-3+\mathbf{i}-3\mathbf{j}-\mathbf{k}&-6-3\mathbf{i}-\mathbf{j}+5\mathbf{k}\end{pmatrix},
~
C_{4}=\begin{pmatrix}-2+7\mathbf{i}-4\mathbf{j}&2-7\mathbf{i}+4\mathbf{j}+8\mathbf{k}\\
-6+\mathbf{i}-\mathbf{j}+\mathbf{k}&13-3\mathbf{i}+\mathbf{j}+4\mathbf{k}\end{pmatrix}.
\end{align*}
Now we consider the system of one-sided coupled Sylvester-type real quaternion matrix equations (\ref{sys03}). Check that
\begin{align*}
r\begin{pmatrix}C_{i}&A_{i}\\B_{i}&0\end{pmatrix}=r(A_{i})+r(B_{i})=
\left\{\begin{array}{lll}
4, &\mbox{if}~
 i=1,2,3\\
3, & \mbox{if}~
 i=4
\end{array},
 \right.
\end{align*}
\begin{align*}
r\begin{pmatrix}A_{2}C_{1}+C_{2}B_{1}&A_{2}A_{1}\\B_{2}B_{1}&0\end{pmatrix}=r(A_{2}A_{1})+r(B_{2}B_{1})=4,
\end{align*}
\begin{align*}
r\begin{pmatrix}A_{3}C_{4}+C_{3}B_{4}&A_{3}A_{4}\\B_{3}B_{4}&0\end{pmatrix}=r(A_{3}A_{4})+r(B_{3}B_{4})=3,
\end{align*}
\begin{align*}
r\begin{pmatrix}C_{2}&C_{3}&A_{2}&A_{3}\\B_{2}&B_{3}&0&0\end{pmatrix}=r(A_{2},~A_{3})+r(B_{2},~B_{3})=4,
\end{align*}
\begin{align*}
r\begin{pmatrix}A_{2}C_{1}+C_{2}B_{1}&A_{3}C_{4}+C_{3}B_{4}&A_{2}A_{1}&A_{3}A_{4}\\B_{2}B_{1}&B_{3}B_{4}&0&0\end{pmatrix}
=r(A_{2}A_{1},~A_{3}A_{4})+r(B_{2}B_{1},~B_{3}B_{4})=4,
\end{align*}
\begin{align*}
r\begin{pmatrix}C_{2}&A_{3}C_{4}+C_{3}B_{4}&A_{2}&A_{3}A_{4}\\B_{2}&B_{3}B_{4}&0&0\end{pmatrix}
=r(A_{2},~A_{3}A_{4})+r(B_{2},~B_{3}B_{4})=4,
\end{align*}
\begin{align*}
r\begin{pmatrix}A_{2}C_{1}+C_{2}B_{1}&C_{3}&A_{2}A_{1}&A_{3}\\B_{2}B_{1}&B_{3}&0&0\end{pmatrix}
=r(A_{2}A_{1},~A_{3})+r(B_{2}B_{1},~B_{3})=4.
\end{align*}
All the rank equalities in (\ref{equhh056})-(\ref{equhh512}) hold. Hence, the system of one-sided coupled Sylvester-type real quaternion matrix equations (\ref{sys03}) is consistent. Note that
\begin{align*}
X_{1}=\begin{pmatrix}\mathbf{i}+\mathbf{j}&-1+\mathbf{k}\\
2+\mathbf{k}&2\mathbf{i}-\mathbf{j}\end{pmatrix}
~
X_{2}=\begin{pmatrix}1+2\mathbf{i}+\mathbf{j}&-\mathbf{i}+2\mathbf{j}\\
\mathbf{k}&1+2\mathbf{k}\end{pmatrix},
\end{align*}
\begin{align*}
X_{3}=\begin{pmatrix}\mathbf{i}&-1+\mathbf{j}\\
-1&-\mathbf{i}+\mathbf{j}+\mathbf{k}\end{pmatrix}
~
X_{4}=\begin{pmatrix}-1+2\mathbf{j}&1+3\mathbf{j}\\
-\mathbf{i}+2\mathbf{j}+2\mathbf{k}&\mathbf{i}+3\mathbf{j}+3\mathbf{k}\end{pmatrix},
\end{align*}and
\begin{align*}
X_{5}=\begin{pmatrix}\mathbf{k}&1+2\mathbf{j}\\ \mathbf{i}+\mathbf{k}&1-\mathbf{i}+\mathbf{j}-\mathbf{k}\end{pmatrix}
\end{align*}
satisfy the system (\ref{sys03}).

\end{example}

Let $A_{4},B_{4},$ and $C_{4}$ vanish in Theorem \ref{theorem03}. Then we can obtain some necessary and sufficient conditions and general solution to the system of coupled generalized  Sylvester real quaternion matrix
equations (\ref{specialsys03}).

\begin{corollary}\cite{auto001}
Let $A_{i},B_{i},$ and $C_{i}(i=1,2,3)$ be given. Set
\begin{align*}
&A=R_{(A_{2}A_{1})}A_{2},B=R_{B_{1}}L_{(R_{B_{3}}B_{2})},C=R_{(A_{2}A_{1})}A_{3},\\&D=B_{2}L_{(R_{B_{3}}B_{2})},
C_{4}=C_{2}+A_{2}^{\dag}R_{A_{1}}C_{1}B_{1}^{\dag}-
R_{A_{3}}C_{3}B_{3}^{\dag}B_{2},\\&E=R_{(A_{2}A_{1})}C_{4}L_{(R_{B_{3}}B_{2})},M=R_{A}C,N=DL_{B},S=CL_{M}.
\end{align*}
Then the following statements are equivalent:\\
$(1)$ The system of coupled generalized  Sylvester real quaternion matrix
equations (\ref{specialsys03}) is consistent.\\
$(2)$
\begin{align*}
r\begin{pmatrix}C_{i}&A_{i}\\B_{i}&0\end{pmatrix}=r(A_{i})+r(B_{i}),(i=1,2,3),
\end{align*}
\begin{align*}
r\begin{pmatrix}A_{2}&A_{3}&C_{2}&C_{3}\\0&0&B_{2}&B_{3}\end{pmatrix}=r(A_{2},~A_{3})
+r(B_{2},~B_{3}),
\end{align*}
\begin{align*}
r\begin{pmatrix}A_{2}C_{1}+C_{2}B_{1}&A_{2}A_{1}\\B_{2}B_{1}&0\end{pmatrix}=r(A_{2}A_{1})
+r(B_{2}B_{1}),
\end{align*}
\begin{align*}
r\begin{pmatrix}A_{3}&A_{2}A_{1}&C_{3}&A_{2}C_{1}+C_{2}B_{1}\\0&0&B_{3}&
B_{2}B_{1} \end{pmatrix}=r(A_{3},~A_{2}A_{1}) +r(B_{3},~B_{2}B_{1}).
\end{align*}
$(3)$
\begin{align*}
&R_{A_{i}}C_{i}L_{B_{i}}=0,(i=1,2),R_{M}R_{A}E=0,\\&
EL_{B}L_{N}=0, R_{A}EL_{D}=0,  R_{C}EL_{B}=0.
\end{align*}

In this case, the general solution to the coupled generalized  Sylvester real quaternion matrix
equations (\ref{specialsys03}) can be expressed as
\begin{align*}
X=A_{1}^{\dag}C_{1}+U_{1}B_{1}+L_{A_{1}}U_{2},~
Y=-R_{A_{1}}C_{1}B_{1}^{\dag}+A_{1}U_{1}+U_{3}R_{B_{1}},
\end{align*}
\begin{align*}
Z=-R_{A_{3}}C_{3}B_{3}^{\dag}-A_{3}V_{1}-V_{3}R_{B_{3}},~
W=A_{3}^{\dag}C_{3}-V_{1}B_{3}-L_{A_{3}}V_{2},
\end{align*}
where
\begin{align*}
  U_{1}=(A_{2}A_{1})^{\dag}(C_{4}-A_{2}U_{3}R_{B_{1}}-A_{3}V_{1}B_{2})-(A_{2}A_{1})^{\dag}%
T_{7}(R_{B_{3}}B_{2})+L_{(A_{2}A_{1})}T_{6},
\end{align*}
\begin{align*}
  V_{3}=R_{(A_{2}A_{1})}(C_{4}-A_{2}U_{3}R_{B_{1}}-A_{3}V_{1}B_{2})(R_{B_{3}}B_{2})^{\dag}%
+(A_{2}A_{1})(A_{2}A_{1})^{\dag}T_{7}+T_{8}R_{(R_{B_{3}}B_{2})},
\end{align*}
\begin{align*}
U_{3}=A^{\dag}EB^{\dag}-A^{\dag}CM^{\dag}EB^{\dag}-A^{\dag}SC^{\dag
}EN^{\dag}DB^{\dag} -A^{\dag}ST_{2}R_{N}DB^{\dag}+L_{A}T_{4}+T_{5}%
R_{B},
\end{align*}
\begin{align*}
 V_{1}=M^{\dag}ED^{\dag}+S^{\dag}SC^{\dag}EN^{\dag}+L_{M}L_{S}T_{1}%
+L_{M}T_{2}R_{N}+T_{3}R_{D},
\end{align*}
and $U_{2},V_{2},T_{1}, \ldots, T_{8}$ are arbitrary matrices over
$\mathbb{H}$ with appropriate sizes.
\end{corollary}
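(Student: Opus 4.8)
The plan is to obtain this corollary as the special case $A_{4}=B_{4}=C_{4}=0$ of Theorem~\ref{theorem03}. The key observation is that the system (\ref{specialsys03}) is exactly the system (\ref{sys03}) with these three given matrices set to zero: in that case the fourth equation $A_{4}X_{5}-X_{4}B_{4}=C_{4}$ degenerates to the trivial identity $0=0$, the unknown $X_{5}$ becomes completely free and is discarded, and the remaining three equations are precisely (\ref{specialsys03}) under the identification $X=X_{1}$, $Y=X_{2}$, $Z=X_{3}$, $W=X_{4}$. So the whole statement should follow by substituting $A_{4}=B_{4}=C_{4}=0$ into Theorem~\ref{theorem03} and simplifying.

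First I would simplify every auxiliary matrix of Theorem~\ref{theorem03} under this substitution. Since $A_{4}=0$ forces $A_{3}A_{4}=0$, repeated use of the elementary identities $R_{0}=L_{0}=I$, $0^{\dag}=0$, the Hermitian-idempotent relations $L_{B}^{\dag}=L_{B}$ and $R_{B}^{\dag}=R_{B}$, the annihilations $L_{B}B^{\dag}=0$ and $R_{B}B=0$, and the fact that appending zero block columns (resp. block rows) to a matrix does not change its column space (resp. row space), yields $A_{22}=A_{3}$, $B_{22}=L_{B_{3}}$, $C_{22}=C_{3}L_{B_{3}}$, $A_{33}=(A_{2}A_{1},~0)$ with $R_{A_{33}}=R_{(A_{2}A_{1})}$, and collapses the compound quantities $A,B,C,D,M,N,S,E$ of Theorem~\ref{theorem03} onto the corresponding quantities of the corollary. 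In particular, the defining expression for $E_{1}$ loses the two terms carrying $C_{4}$ and $C_{22}B_{22}^{\dag}R_{B_{4}}$ while $R_{(A_{3}A_{4})}C_{3}B_{3}^{\dag}$ simplifies to $C_{3}B_{3}^{\dag}$, and $E=R_{A_{33}}E_{1}L_{B_{33}}$ becomes $R_{(A_{2}A_{1})}C_{4}L_{(R_{B_{3}}B_{2})}$ with the corollary's derived matrix $C_{4}=C_{2}+A_{2}^{\dag}R_{A_{1}}C_{1}B_{1}^{\dag}-R_{A_{3}}C_{3}B_{3}^{\dag}B_{2}$. Pinning down this identification of the two lists of auxiliary matrices --- which requires collapsing nested projectors such as $R_{B_{11}}R_{B_{1}}$ and absorbing a harmless sign in $C$ --- is the computational core of the argument.

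Next I would push the substitution through the three blocks of Theorem~\ref{theorem03}. In (\ref{equhh056}) the case $i=4$ and the rank equality containing $A_{3}C_{4}+C_{3}B_{4}$ both become $0=0$; the cases $i=1,2,3$ of (\ref{equhh056}) supply three of the four rank equalities of the corollary, the equality (\ref{equhh512}) supplies the fourth, and the remaining rank equalities of Theorem~\ref{theorem03}, part $(2)$, become duplicates of these after zero blocks are deleted. Likewise, in part $(3)$ the condition $R_{A_{4}}C_{4}L_{B_{4}}=0$ is vacuous, $C_{22}L_{B_{22}}=0$ holds identically because $C_{3}L_{B_{3}}B_{3}^{\dag}B_{3}=0$, and the remaining conditions reduce to those of the corollary under the identification above. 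Finally, substituting into the solution formulas gives $X_{5}=A_{4}^{\dag}C_{4}+U_{2}B_{4}+L_{A_{4}}T_{1}=T_{1}$, an arbitrary matrix, so $X_{5}$ is genuinely unconstrained and can be dropped, while the formulas for $X_{1},\ldots,X_{4}$ reduce term by term to those for $X,Y,Z,W$, the inner free parameters matching after an evident renaming.

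The main obstacle is precisely the bookkeeping of the second step: the auxiliary matrices of Theorem~\ref{theorem03} are adapted to the four-equation structure and are written in a form different from the independently stated auxiliary matrices of the corollary, so one must carefully verify a chain of projector identities (for instance, that the various nested products of $R_{B_{1}},L_{B_{2}},R_{B_{3}}$ collapse to the projectors appearing in the corollary) and track the few differing sign conventions. Once this dictionary is established, the equivalences $(1)\Leftrightarrow(2)\Leftrightarrow(3)$ and the general solution formulas of the corollary follow immediately from Theorem~\ref{theorem03}.
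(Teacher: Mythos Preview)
Your proposal is correct and follows exactly the approach the paper takes: the paper simply states ``Let $A_{4},B_{4},$ and $C_{4}$ vanish in Theorem~\ref{theorem03}'' and cites the corollary from \cite{auto001} without further detail. Your outline of how the auxiliary matrices, rank conditions, and solution formulas collapse under this substitution is considerably more explicit than anything the paper provides, but the underlying idea is identical.
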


\section{\textbf{Some solvability conditions and the general solution to
system (\ref{sys04})}}

Our goal of this section is to give some necessary and
sufficient conditions and the general solution
 to the system (\ref{sys04}). Set
\begin{align*}
A_{jj}=R_{(A_{2j}A_{2j-1})}A_{2j},B_{jj}=R_{B_{2j-1}}L_{B_{2j}},
C_{jj}=R_{(A_{2j}A_{2j-1})}(A_{2j}R_{A_{2j-1}}C_{2j-1}B_{2j-1}^{\dag}+C_{2j})L_{B_{2j}},
\end{align*}
\begin{align*}
(j=1,2),~A_{33}=(A_{2}A_{1},-L_{A_{3}}),~B_{33}=\begin{pmatrix}R_{B_{2}}\\-B_{4}B_{3}\end{pmatrix},~A_{44}=R_{B_{11}}R_{B_{1}}B_{2}^{\dag},B_{44}=-L_{(A_{4}A_{3})},
\end{align*}
\begin{align*}
E_{1}=A_{3}^{\dag}C_{3}+(A_{4}A_{3})^{\dag}C_{4}B_{3}+(A_{4}A_{3})^{\dag}A_{4}R_{A_{3}}C_{3}
+R_{(A_{2}A_{1})}C_{2}B_{2}^{\dag}+A_{11}R_{A_{1}}C_{1}B_{1}^{\dag}B_{2}^{\dag}-C_{11}B_{11}^{\dag}R_{B_{1}}B_{2}^{\dag},
\end{align*}
\begin{align*}
A=R_{A_{33}}A_{11},~B=A_{44}L_{B_{33}},~C=R_{A_{33}}B_{44},~D=B_{3}L_{B_{33}},
\end{align*}
\begin{align*}
M=R_{A}C,~N=DL_{B},~S=CL_{M},~E=R_{A_{33}}E_{1}L_{B_{33}}.
\end{align*}

Now we give the fundamental theorem of this section.

\begin{theorem}\label{theorem04}
Let $A_{i},B_{i},$ and $C_{i}(i=1,2,3,4)$ be given. Then the following statements are equivalent:\\
$(1)$ The system of one-sided coupled Sylvester-type real quaternion matrix equations (\ref{sys04}) is consistent.\\
$(2)$
\begin{align}\label{equhh061}
r\begin{pmatrix}C_{i}&A_{i}\\B_{i}&0\end{pmatrix}=r(A_{i})+r(B_{i}),\quad (i=1,2,3,4),
\end{align}
\begin{align}
r\begin{pmatrix}A_{k+1}C_{k}+C_{k+1}B_{k}&A_{k+1}A_{k}\\B_{k+1}B_{k}&0\end{pmatrix}=r(A_{k+1}A_{k})
+r(B_{k+1}B_{k}),~(k=1,2,3),
\end{align}
\begin{align}
&r\begin{pmatrix}A_{j+2}A_{j+1}C_{j}+A_{j+2}C_{j+1}B_{j}+C_{j+2}B_{j+1}B_{j}&A_{j+2}A_{j+1}A_{j}\\B_{j+2}B_{j+1}B_{j}&0\end{pmatrix}\nonumber\\&=r(A_{j+2}A_{j+1}A_{j})
+r(B_{j+2}B_{j+1}B_{j}),~(j=1,2),
\end{align}
\begin{align}\label{equhh064}
&r\begin{pmatrix}A_{4}A_{3}A_{2}C_{1}+A_{4}A_{3}C_{2}A_{1}+A_{4}C_{3}A_{2}A_{1}+C_{4}A_{3}A_{2}A_{1}&A_{4}A_{3}A_{2}A_{1}\\B_{4}B_{3}B_{2}B_{1}&0\end{pmatrix}\nonumber\\&
=r(A_{4}A_{3}A_{2}A_{1})
+r(B_{4}B_{3}B_{2}B_{1}).
\end{align}

$(3)$
\begin{align*}
R_{A_{2j-1}}C_{2j-1}L_{B_{2j-1}}=0,~R_{A_{jj}}C_{jj}=0,~~C_{jj}L_{B_{jj}}=0,~(j=1,2),
\end{align*}
\begin{align*}
R_{M}R_{A}E=0,~EL_{B}L_{N}=0,~R_{A}EL_{D}=0,~R_{C}EL_{B}=0.
\end{align*}

In this case, the general solution to the system of one-sided coupled Sylvester-type real quaternion matrix equations (\ref{sys04}) can be expressed as
\begin{align*}
X_{1}=A_{1}^{\dag}C_{1}+U_{1}B_{1}+L_{A_{1}}W_{1},~
X_{2}=-R_{A_{1}}C_{1}B_{1}^{\dag}+A_{1}U_{1}+V_{1}R_{B_{1}},
\end{align*}
\begin{align*}
X_{3}=A_{3}^{\dag}C_{3}+U_{2}B_{3}+L_{A_{3}}T_{1},~
X_{4}=-R_{A_{3}}C_{3}B_{3}^{\dag}+A_{3}U_{2}+V_{2}R_{B_{3}},
\end{align*}
\begin{align*}
X_{3}=-R_{(A_{2}A_{1})}C_{2}B_{2}^{\dag}-A_{11}R_{A_{1}}C_{1}B_{1}^{\dag}B_{2}^{\dag}+C_{11}B_{11}^{\dag}R_{B_{1}}B_{2}^{\dag}+A_{11}W_{3}A_{44}+A_{2}A_{1}W_{4}+W_{5}R_{B_{2}},
\end{align*}
or
\begin{align*}
X_{3}=A_{3}^{\dag}C_{3}+(A_{4}A_{3})^{\dag}C_{4}B_{3}+(A_{4}A_{3})^{\dag}A_{4}R_{A_{3}}C_{3}+L_{A_{3}}T_{1}+T_{4}B_{4}B_{3}+L_{(A_{4}A_{3})}T_{6}B_{3},
\end{align*}
where
\begin{align*}
V_{1}=A_{11}^{\dag}C_{11}B_{11}^{\dag}+L_{A_{11}}W_{2}+W_{3}R_{B_{11}},
\end{align*}
\begin{align*}
U_{1}=(A_{2}A_{1})^{\dag}(C_{2}+A_{2}R_{A_{1}}C_{1}B_{1}^{\dag}-A_{2}V_{1}R_{B_{1}})+W_{4}B_{2}+L_{(A_{2}A_{1})}W_{6},
\end{align*}
\begin{align*}
V_{2}=A_{22}^{\dag}C_{22}B_{22}^{\dag}+L_{A_{22}}T_{2}+T_{3}R_{B_{22}},
\end{align*}
\begin{align*}
U_{2}=(A_{4}A_{3})^{\dag}(C_{4}+A_{4}R_{A_{3}}C_{3}B_{3}^{\dag}-A_{4}V_{2}R_{B_{3}})+T_{4}B_{4}+L_{(A_{4}A_{3})}T_{6},
\end{align*}
\begin{align*}
W_{4}=(I_{p_{1}},~0)[
A_{33}^{\dag}(E_{1}-A_{11}W_{3}A_{44}-B_{44}T_{6}B_{3})-A_{33}^{\dag}Z_{7}B_{33}+L_{A_{33}}Z_{6}],
\end{align*}
\begin{align*}
T_{1}=(0,~I_{p_{2}})[
A_{33}^{\dag}(E_{1}-A_{11}W_{3}A_{44}-B_{44}T_{6}B_{3})-A_{33}^{\dag}Z_{7}B_{33}+L_{A_{33}}Z_{6}],
\end{align*}
\begin{align*}
W_{5}=[R_{A_{33}}(E_{1}-A_{11}W_{3}A_{44}-B_{44}T_{6}B_{3})B_{33}^{\dag}
+A_{33}A_{33}^{\dag}Z_{7}+Z_{8}R_{B_{33}}]\begin{pmatrix}I_{p_{3}}\\0\end{pmatrix},
\end{align*}
\begin{align*}
T_{4}=[R_{A_{33}}(E_{1}-A_{11}W_{3}A_{44}-B_{44}T_{6}B_{3})B_{33}^{\dag}
+A_{33}A_{33}^{\dag}Z_{7}+Z_{8}R_{B_{33}}]\begin{pmatrix}0\\I_{p_{4}}\end{pmatrix},
\end{align*}
\begin{align*}
W_{3}=
A^{\dag}EB^{\dag}-A^{\dag}CM^{\dag}EB^{\dag}-A^{\dag}SC^{\dag
}EN^{\dag}DB^{\dag}-A^{\dag}SZ_{1}R_{N}DB^{\dag}+L_{A}Z_{2}+Z_{3}%
R_{B},
\end{align*}
\begin{align*}
T_{6}=M^{\dag}ED^{\dag}+S^{\dag}SC^{\dag}EN^{\dag}+L_{M}L_{S}Z_{4}%
+L_{M}Z_{1}R_{N}+Z_{5}R_{D},
\end{align*}
the remaining $W_{j},T_{j},Z_{j}$ are  arbitrary matrices over $\mathbb{H}$, $p_{1}$ and $p_{2}$ are the column numbers of $A_{1}$ and $A_{3}$, respectively, $p_{3}$ and $p_{4}$ are the row numbers of $B_{2}$ and $B_{4}$, respectively.

\end{theorem}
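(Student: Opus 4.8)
The plan is to follow the three-step reduction used to prove Theorem~\ref{theorem01}. First I would cut (\ref{sys04}) along its middle unknown $X_{3}$ into the two subsystems
\begin{align*}
\left\{\begin{array}{c}
A_{1}X_{1}-X_{2}B_{1}=C_{1},\\
A_{2}X_{2}-X_{3}B_{2}=C_{2},
\end{array}\right.
\qquad\qquad
\left\{\begin{array}{c}
A_{3}X_{3}-X_{4}B_{3}=C_{3},\\
A_{4}X_{4}-X_{5}B_{4}=C_{4},
\end{array}\right.
\end{align*}
both of which are instances of the mixed generalized Sylvester system (\ref{bufensys01}). Applying Lemma~\ref{lemma02} to the first subsystem yields its consistency conditions $R_{A_{1}}C_{1}L_{B_{1}}=0$, $R_{A_{11}}C_{11}=0$, $C_{11}L_{B_{11}}=0$ and a parametrized general solution; substituting the expressions for $V_{1}$ and $U_{1}$ and using $R_{(A_{2}A_{1})}A_{2}=A_{11}$, the identity $A_{11}A_{11}^{\dag}C_{11}=C_{11}$ (forced by $R_{A_{11}}C_{11}=0$), and $A_{44}=R_{B_{11}}R_{B_{1}}B_{2}^{\dag}$, its $X_{3}$-component collapses to the form displayed in the theorem, carrying the free parameters $W_{3},W_{4},W_{5}$. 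Applying Lemma~\ref{lemma02} to the second subsystem — where $X_{3}$ now plays the role of the ``first unknown'', so $A_{3},A_{4},B_{3},B_{4}$ enter through $A_{22},B_{22},C_{22}$ — yields $R_{A_{3}}C_{3}L_{B_{3}}=0$, $R_{A_{22}}C_{22}=0$, $C_{22}L_{B_{22}}=0$ and a second $X_{3}$-component, in which the cancellation $R_{B_{3}}B_{3}=0$ removes $V_{2}$, leaving free parameters $T_{1},T_{4},T_{6}$.

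Second, I would equate the two expressions for the common variable $X_{3}$. Transferring all free parameters to one side and using the definitions of $A_{33},B_{33},A_{44},B_{44}$ given before the theorem together with $R_{(A_{2}A_{1})}A_{2}=A_{11}$, this reduces to the single linear matrix equation
\begin{align*}
A_{33}\begin{pmatrix}W_{4}\\T_{1}\end{pmatrix}+(W_{5},~T_{4})B_{33}+A_{11}W_{3}A_{44}+B_{44}T_{6}B_{3}=E_{1}.
\end{align*}
This is of the shape of equation (\ref{equ28}) with $A_{1},B_{1},C_{3},D_{3},C_{4},D_{4}$ there replaced by $A_{33},B_{33},A_{11},A_{44},B_{44},B_{3}$, so that the auxiliary matrices $A,B,C,D,E,M,N,S$ introduced before the theorem are precisely those produced by Lemma~\ref{lemma03}. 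That lemma then yields the remaining consistency conditions $R_{M}R_{A}E=0$, $EL_{B}L_{N}=0$, $R_{A}EL_{D}=0$, $R_{C}EL_{B}=0$ and an explicit parametrized solution for $\begin{pmatrix}W_{4}\\T_{1}\end{pmatrix}$, $(W_{5},T_{4})$, $W_{3}$ and $T_{6}$ in terms of fresh free matrices $Z_{1},\ldots,Z_{8}$. Extracting the blocks via the projections $(I_{p_{i}},0)$, $(0,I_{p_{i}})$, $\begin{pmatrix}I_{p_{i}}\\0\end{pmatrix}$, $\begin{pmatrix}0\\I_{p_{i}}\end{pmatrix}$ and back-substituting into the two subsystem solutions gives the stated formulas for $X_{1},\ldots,X_{5}$, settling $(1)\Leftrightarrow(3)$ and the solution formulas simultaneously.

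Third, I would prove $(2)\Leftrightarrow(3)$. The rank equalities $r\begin{pmatrix}C_{i}&A_{i}\\B_{i}&0\end{pmatrix}=r(A_{i})+r(B_{i})$ for $i=1,2,3,4$, together with the two ``pairwise'' equalities for $k=1$ and $k=3$, are immediate from the internal equivalence $(2)\Leftrightarrow(3)$ of Lemma~\ref{lemma02} applied to each subsystem. It then remains to show that $R_{M}R_{A}E=0$, $EL_{B}L_{N}=0$, $R_{A}EL_{D}=0$, $R_{C}EL_{B}=0$ are equivalent to the four outstanding rank equalities of (2): the $k=2$ pairwise one, the two ``triple'' equalities $(j=1,2)$, and the ``quadruple'' equality (\ref{equhh064}). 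Each of these I would dispatch with the block-rank calculus used at the end of the proof of Theorem~\ref{theorem01}: apply Lemma~\ref{lemma04} repeatedly to strip off $R$- and $L$-projectors (for instance $R_{M}R_{A}E=0\Leftrightarrow r(R_{A}E,~M)=r(M)\Leftrightarrow r(A,~C,~E)=r(A,~C)$), substitute the definitions of $A_{33},B_{33},A_{44},B_{44},A_{11}$ and $E_{1}$, and perform block row and column operations that telescope the products $A_{2}A_{1}$, $A_{4}A_{3}$, $B_{2}B_{1}$, $B_{4}B_{3}$, $A_{2j}A_{2j-1}$ and absorb the Moore--Penrose terms hidden in $E_{1}$ (using $AA^{\dag}$-type identities supplied by the already-established subsystem conditions) until only the original data $A_{i},B_{i},C_{i}$ remain.

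The main obstacle is exactly this last reduction: $E_{1}$ is a sum of six Moore--Penrose-laden terms and $A_{33},B_{33},A_{44},B_{44}$ themselves hide projectors, so checking that each of the four conditions collapses to a clean rank equality involving a long telescoped product of the data (as in the triple equalities for $j=1,2$ and the quadruple equality (\ref{equhh064})) requires a careful multi-line chain of rank identities of exactly the kind displayed for $R_{M}R_{A}E=0$ in the proof of Theorem~\ref{theorem01}. Everything else — the two invocations of Lemma~\ref{lemma02}, the single invocation of Lemma~\ref{lemma03}, and the bookkeeping of the free parameters $W_{j},T_{j},Z_{j}$ — is routine once that pattern is in hand.
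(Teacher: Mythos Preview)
Your proposal is correct and matches the paper's own approach exactly: the paper also splits (\ref{sys04}) at $X_{3}$ into two subsystems, both of type (\ref{bufensys01}), applies Lemma~\ref{lemma02} to each, equates the two resulting expressions for $X_{3}$ to obtain an equation of the form (\ref{equ28}), invokes Lemma~\ref{lemma03}, and then reduces the projector conditions to the rank equalities via Lemma~\ref{lemma04}. In fact, the paper's proof is only a one-paragraph sketch pointing to these lemmas and to the template of Theorem~\ref{theorem01}, so your outline is already more detailed than what the paper provides.
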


\begin{proof}
We separate this system of one-sided coupled Sylvester-type real quaternion matrix equations (\ref{sys04}) into two parts
\begin{align}
  \left\{\begin{array}{c}
A_{1}X_{1}-X_{2}B_{1}=C_{1},\\
A_{2}X_{2}-X_{3}B_{2}=C_{2},
\end{array}
  \right.
\end{align}
and
\begin{align}
  \left\{\begin{array}{c}
A_{3}X_{3}-X_{4}B_{3}=C_{3},\\
A_{4}X_{4}-X_{5}B_{4}=C_{4}.
\end{array}
  \right.
\end{align}
Applying the main idea of Theorem \ref{theorem01}, Lemma \ref{lemma02}, Lemma \ref{lemma03} and Lemma \ref{lemma04}, we can prove Theorem \ref{theorem04}.
\end{proof}

Now we give an example to illustrate Theorem \ref{theorem04}.

\begin{example}
Given the quaternion matrices:
\begin{align*}
A_{1}=\begin{pmatrix}1+\mathbf{k}&-1&2\mathbf{i}+\mathbf{j}\\
0&\mathbf{i}+\mathbf{k}&\mathbf{i}-2\mathbf{j}\\
1+\mathbf{i}&2-\mathbf{i}&1+\mathbf{k}\end{pmatrix}, ~
B_{1}=\begin{pmatrix}-1+\mathbf{k}&\mathbf{i}+\mathbf{k}&\mathbf{j}+\mathbf{k}\\
-2-\mathbf{j}&2\mathbf{i}-\mathbf{j}&-\mathbf{j}+\mathbf{k}\\
1+\mathbf{i}-\mathbf{j}+\mathbf{k}&-1+\mathbf{i}-\mathbf{j}+\mathbf{k}&2\mathbf{k}\end{pmatrix},
\end{align*}
\begin{align*}
A_{2}=\begin{pmatrix}\mathbf{i}&\mathbf{j}&1+2\mathbf{i}+\mathbf{k}\\
\mathbf{k}&\mathbf{i}-\mathbf{j}&-1-2\mathbf{j}+\mathbf{k}\\
\mathbf{i}+\mathbf{k}&\mathbf{i}&2\mathbf{i}-2\mathbf{j}+2\mathbf{k}\end{pmatrix},~
B_{2}=\begin{pmatrix}\mathbf{j}&1+2\mathbf{i}+\mathbf{j}&-\mathbf{i}+\mathbf{k}\\
\mathbf{i}-\mathbf{j}&\mathbf{k}&1+2\mathbf{j}\\
\mathbf{i}&1+2\mathbf{i}+\mathbf{j}+\mathbf{k}&1-\mathbf{i}+2\mathbf{j}+\mathbf{k}\end{pmatrix},
\end{align*}
\begin{align*}
A_{3}=\begin{pmatrix}1+2\mathbf{i}+\mathbf{k}&2-\mathbf{i}-\mathbf{k}&1+\mathbf{j}\\
-1-2\mathbf{i}-\mathbf{j}+\mathbf{k}&-2+\mathbf{i}+\mathbf{j}-\mathbf{k}&-1+\mathbf{j}+\mathbf{k}\\
-\mathbf{k}&\mathbf{k}&-\mathbf{j}\end{pmatrix},
\end{align*}
\begin{align*}
B_{3}=\begin{pmatrix}\mathbf{i}+2\mathbf{j}&1+3\mathbf{j}&\mathbf{j}-3\mathbf{k}\\
-1+\mathbf{i}-2\mathbf{j}&1+\mathbf{i}-3\mathbf{j}&-\mathbf{j}+3\mathbf{k}\\
\mathbf{i}&1&0\end{pmatrix},
\end{align*}
\begin{align*}
A_{4}=\begin{pmatrix}2+3\mathbf{i}+\mathbf{k}&3-\mathbf{j}&\mathbf{i}+\mathbf{j}+\mathbf{k}\\
-3+2\mathbf{i}-\mathbf{j}&3\mathbf{i}-\mathbf{k}&-1-\mathbf{j}+\mathbf{k}\\
-1+5\mathbf{i}-\mathbf{j}+\mathbf{k}&3+3\mathbf{i}-\mathbf{j}-\mathbf{k}&-1+\mathbf{i}+2\mathbf{k}\end{pmatrix},
\end{align*}
\begin{align*}
B_{4}=\begin{pmatrix}1&\mathbf{i}+\mathbf{k}&1+2\mathbf{i}-\mathbf{j}\\
\mathbf{i}&-1-\mathbf{j}&-2+\mathbf{i}-\mathbf{k}\\
1+\mathbf{i}&-1+\mathbf{i}+2\mathbf{k}&-1+3\mathbf{i}-2\mathbf{j}\end{pmatrix},
\end{align*}
\begin{align*}
C_{1}=\begin{pmatrix}-1+4\mathbf{i}-\mathbf{j}-\mathbf{k}&-4+2\mathbf{i}-5\mathbf{j}+6\mathbf{k}&3-2\mathbf{i}+6\mathbf{k}\\
1-5\mathbf{i}-6\mathbf{j}+\mathbf{k}&5+\mathbf{i}-2\mathbf{j}+\mathbf{k}&3-2\mathbf{i}+\mathbf{k}\\
-6-3\mathbf{i}+2\mathbf{j}+3\mathbf{k}&-2-8\mathbf{i}+3\mathbf{j}+11\mathbf{k}&5\mathbf{j}\end{pmatrix},
\end{align*}
\begin{align*}
C_{2}=\begin{pmatrix}2-3\mathbf{i}&8-3\mathbf{j}+4\mathbf{k}&-1+\mathbf{i}-5\mathbf{j}-8\mathbf{k}\\
1-2\mathbf{j}&1-9\mathbf{i}-4\mathbf{j}-2\mathbf{k}&-6+2\mathbf{i}-\mathbf{j}-5\mathbf{k}\\
\mathbf{j}-2\mathbf{k}&6-8\mathbf{i}-5\mathbf{j}+2\mathbf{k}&-7-\mathbf{i}-5\mathbf{j}-4\mathbf{k}\end{pmatrix},
\end{align*}
\begin{align*}
C_{3}=\begin{pmatrix}3+3\mathbf{j}-\mathbf{k}&-3+\mathbf{i}+6\mathbf{j}-2\mathbf{k}&1-4\mathbf{j}-5\mathbf{k}\\
1+\mathbf{i}+\mathbf{j}+2\mathbf{k}&6-4\mathbf{i}+4\mathbf{j}+3\mathbf{k}&-6+13\mathbf{j}-4\mathbf{k}\\
3+4\mathbf{i}-\mathbf{j}+6\mathbf{k}&3-3\mathbf{i}+5\mathbf{j}-4\mathbf{k}&2-\mathbf{i}+5\mathbf{j}-7\mathbf{k}\end{pmatrix},
\end{align*}
\begin{align*}
C_{4}=\begin{pmatrix}-11-5\mathbf{i}-6\mathbf{j}+\mathbf{k}&-1+8\mathbf{i}-2\mathbf{j}+7\mathbf{k}&-10+\mathbf{i}-3\mathbf{j}+6\mathbf{k}\\
5-11\mathbf{i}-3\mathbf{j}-5\mathbf{k}&-6-2\mathbf{i}-5\mathbf{j}-3\mathbf{k}&-2-12\mathbf{i}-3\mathbf{j}+3\mathbf{k}\\
-6-16\mathbf{i}-5\mathbf{j}-4\mathbf{k}&-11+6\mathbf{i}-7\mathbf{j}&-12-7\mathbf{i}-2\mathbf{j}+\mathbf{k}\end{pmatrix}.
\end{align*}
Now we consider the system of one-sided coupled Sylvester-type real quaternion matrix equations (\ref{sys04}). Check that
\begin{align*}
r\begin{pmatrix}C_{i}&A_{i}\\B_{i}&0\end{pmatrix}=r(A_{i})+r(B_{i})=
\left\{\begin{array}{lll}
6, &\mbox{if}~
 i=1\\
4, & \mbox{if}~
 i=2,3\\
 3, & \mbox{if}~
 i=4
\end{array}
 \right.
\end{align*}
\begin{align*}
r\begin{pmatrix}A_{k+1}C_{k}+C_{k+1}B_{k}&A_{k+1}A_{k}\\B_{k+1}B_{k}&0\end{pmatrix}=r(A_{k+1}A_{k})
+r(B_{k+1}B_{k})=
\left\{\begin{array}{lll}
4, &\mbox{if}~
 k=1,2\\
3, & \mbox{if}~
 k=3
\end{array}
 \right.
\end{align*}
\begin{align*}
&r\begin{pmatrix}A_{j+2}A_{j+1}C_{j}+A_{j+2}C_{j+1}B_{j}+C_{j+2}B_{j+1}B_{j}&A_{j+2}A_{j+1}A_{j}\\B_{j+2}B_{j+1}B_{j}&0\end{pmatrix}\nonumber\\&=r(A_{j+2}A_{j+1}A_{j})
+r(B_{j+2}B_{j+1}B_{j})=
\left\{\begin{array}{lll}
4, &\mbox{if}~
 j=1\\
3, & \mbox{if}~
 j=2
\end{array}
 \right.
\end{align*}
\begin{align*}
&r\begin{pmatrix}A_{4}A_{3}A_{2}C_{1}+A_{4}A_{3}C_{2}A_{1}+A_{4}C_{3}A_{2}A_{1}+C_{4}A_{3}A_{2}A_{1}&A_{4}A_{3}A_{2}A_{1}\\B_{4}B_{3}B_{2}B_{1}&0\end{pmatrix}\nonumber\\&
=r(A_{4}A_{3}A_{2}A_{1})
+r(B_{4}B_{3}B_{2}B_{1})=3.
\end{align*}
All the rank equalities in (\ref{equhh061})-(\ref{equhh064}) hold. Hence, the system of one-sided coupled Sylvester-type real quaternion matrix equations (\ref{sys04}) is consistent. Note that
\begin{align*}
X_{1}=\begin{pmatrix}2\mathbf{i}+\mathbf{k}&-1+\mathbf{j}+\mathbf{k}&2+\mathbf{j}\\
-2\mathbf{i}+\mathbf{k}&1+\mathbf{j}+\mathbf{k}&-2+\mathbf{j}\\
2\mathbf{k}&2\mathbf{j}+2\mathbf{k}&2\mathbf{j}\end{pmatrix}
~
X_{2}=\begin{pmatrix}1&-1+\mathbf{j}&\mathbf{i}+\mathbf{k}\\
2&-2-\mathbf{j}&2\mathbf{i}-\mathbf{k}\\
-1&1+2\mathbf{j}&-\mathbf{i}+2\mathbf{k}\end{pmatrix},
\end{align*}
\begin{align*}
X_{3}=\begin{pmatrix}\mathbf{i}+\mathbf{j}&1+2\mathbf{i}+\mathbf{k}&2\mathbf{k}\\
1&\mathbf{k}&1\\
\mathbf{i}&0&1+\mathbf{k}\end{pmatrix}
~
X_{4}=\begin{pmatrix}-1+\mathbf{i}+\mathbf{k}&1+\mathbf{k}&\mathbf{i}+\mathbf{k}\\
-1-\mathbf{i}+\mathbf{k}&\mathbf{i}+\mathbf{k}&-1+\mathbf{k}\\
-2+2\mathbf{k}&1+\mathbf{i}+2\mathbf{k}&-1+\mathbf{i}+2\mathbf{k}\end{pmatrix},
\end{align*}and
\begin{align*}
X_{5}=\begin{pmatrix}1&-1+\mathbf{j}&\mathbf{i}+\mathbf{k}\\
2&-2+2\mathbf{j}&2\mathbf{i}+2\mathbf{k}\\
3&-3-\mathbf{j}&3\mathbf{i}-\mathbf{k}\end{pmatrix}
\end{align*}
satisfy the system (\ref{sys04}).

\end{example}

\section{\textbf{Some solvability conditions and the general solution to
system (\ref{sys05})}}

In this section, we consider the solvability conditions and the general solution to the
system of one-sided coupled Sylvester-type real quaternion matrix equations (\ref{sys05}). For simplicity, put
\begin{align*}
A_{11}=R_{B_{2}}B_{1},~B_{11}=R_{A_{1}}A_{2},~C_{11}=B_{1}L_{A_{11}},~D_{11}=R_{A_{1}}(R_{A_{2}}C_{2}B_{2}^{\dag}B_{1}-C_{1})L_{A_{11}},
\end{align*}
\begin{align*}
A_{22}=R_{(A_{4}L_{A_{3}})}A_{4},B_{22}=B_{3}L_{B_{4}},C_{22}=R_{(A_{4}L_{A_{3}})}(C_{4}-A_{4}A_{3}^{\dag}C_{3})L_{B_{4}},
\end{align*}
\begin{align*}
A_{33}=(L_{A_{2}},~-A_{3}L_{A_{22}}),\qquad B_{33}=\begin{pmatrix}R_{C_{11}}B_{2}\\-R_{B_{3}}\end{pmatrix},
\end{align*}
\begin{align*}
E_{1}=-R_{A_{3}}C_{3}B_{3}^{\dag}+A_{3}A_{22}^{\dag}C_{22}B_{22}^{\dag}-A_{2}^{\dag}C_{2}-B_{11}^{\dag}D_{11}C_{11}^{\dag}B_{2},
\end{align*}
\begin{align*}
A=R_{A_{33}}L_{B_{11}},~B=B_{2}L_{B_{33}},~C=-R_{A_{33}}A_{3},~D=R_{B_{22}}L_{B_{33}},
\end{align*}
\begin{align*}
E=R_{A_{33}}E_{1}L_{B_{33}},~M=R_{A}C,~N=DL_{B},~S=CL_{M}.
\end{align*}

\begin{theorem}\label{theorem71}
Let $A_{i},B_{i},$ and $C_{i}(i=1,2,3,4)$ be given. Then the following statements are equivalent:\\$(1)$ The system of one-sided coupled Sylvester-type real quaternion matrix equations (\ref{sys05}) is consistent.\\$(2)$
\begin{align}\label{equhh071}
r\begin{pmatrix}C_{i}&A_{i}\\B_{i}&0\end{pmatrix}=r(A_{i})+r(B_{i}),\quad (i=1,2,3,4),
\end{align}
\begin{align}
r\begin{pmatrix}C_{1}&C_{2}&A_{1}&A_{2}\\B_{1}&B_{2}&0&0\end{pmatrix}=r(A_{1},~A_{2})+r(B_{1},~B_{2}),
\end{align}
\begin{align}
r\begin{pmatrix}C_{3}&A_{3}\\C_{4}&A_{4}\\B_{3}&0\\B_{4}&0\end{pmatrix}=r\begin{pmatrix}A_{3}\\A_{4}\end{pmatrix}+r\begin{pmatrix}B_{3}\\B_{4}\end{pmatrix},
\end{align}
\begin{align}
r\begin{pmatrix}C_{1}&A_{2}C_{3}+C_{2}B_{3}&A_{1}&A_{2}A_{3}\\B_{1}&B_{2}B_{3}&0&0\end{pmatrix}=
r(A_{1},~A_{2}A_{3})+r(B_{1},~B_{2}B_{3}),
\end{align}
\begin{align}
r\begin{pmatrix}A_{2}C_{3}+C_{2}B_{3}&A_{2}A_{3}\\C_{4}&A_{4}\\B_{2}B_{3}&0\\B_{4}&0\end{pmatrix}=
r\begin{pmatrix}A_{2}A_{3}\\A_{4}\end{pmatrix}+r\begin{pmatrix}B_{2}B_{3}\\B_{4}\end{pmatrix},
\end{align}
\begin{align}
r\begin{pmatrix}A_{2}C_{3}+C_{2}B_{3}&A_{2}A_{3}\\B_{2}B_{3}&0\end{pmatrix}=
r(A_{2}A_{3})+r(B_{2}B_{3}),
\end{align}
\begin{align}\label{equhh077}
r\begin{pmatrix}C_{1}&A_{2}C_{3}+C_{2}B_{3}&A_{1}&A_{2}A_{3}\\0&C_{4}&0&A_{4}\\B_{1}&B_{2}B_{3}&0&0\\0&B_{4}&0&0\end{pmatrix}=
r\begin{pmatrix}A_{1}&A_{2}A_{3}\\0&A_{4}\end{pmatrix}+r\begin{pmatrix}B_{1}&B_{2}B_{3}\\0&B_{4}\end{pmatrix}.
\end{align}

$(3)$
\begin{align*}
R_{A_{2}}C_{2}L_{B_{2}}=0,~D_{11}L_{C_{11}}=0,~R_{B_{11}}D_{11}=0,
\end{align*}
\begin{align*}
R_{A_{3}}C_{3}L_{B_{3}}=0,~R_{A_{22}}C_{22}=0,~C_{22}L_{B_{22}}=0,
\end{align*}
\begin{align*}
R_{M}R_{A}E=0,~EL_{B}L_{N}=0,~R_{A}EL_{D}=0,~R_{C}EL_{B}=0.
\end{align*}

In this case, the general solution to the system of one-sided coupled Sylvester-type real quaternion matrix equations (\ref{sys05}) can be expressed as
\begin{align*}
X_{1}=A_{1}^{\dag}(C_{1}-R_{A_{2}}C_{2}B_{2}^{\dag}B_{1}+A_{2}U_{1}B_{1})+W_{4}A_{11}+L_{A_{1}}W_{6},
\end{align*}
\begin{align*}
X_{2}=-R_{A_{2}}C_{2}B_{2}^{\dag}+A_{2}U_{1}+V_{1}R_{B_{2}},
~
X_{4}=A_{3}^{\dag}C_{3}+V_{2}B_{3}+L_{A_{3}}U_{2},
\end{align*}
\begin{align*}
X_{5}=-R_{(A_{4}L_{A_{3}})}(C_{4}-A_{4}A_{3}^{\dag}C_{3}-A_{4}V_{2}B_{3})B_{4}^{\dag}+A_{4}L_{A_{3}}T_{1}+T_{3}R_{B_{4}},
\end{align*}
\begin{align*}
X_{3}=A_{2}^{\dag}C_{2}+U_{1}B_{2}+L_{A_{2}}W_{1},
~\mbox{or}~
X_{3}=-R_{A_{3}}C_{3}B_{3}^{\dag}+A_{3}V_{2}+T_{6}R_{B_{3}},
\end{align*}
where
\begin{align*}
U_{1}=B_{11}^{\dag}D_{11}C_{11}^{\dag}+L_{B_{11}}W_{2}+W_{3}R_{C_{11}},
\end{align*}
\begin{align*}
V_{1}=-R_{A_{1}}(C_{1}-R_{A_{2}}C_{2}B_{2}^{\dag}B_{1}+A_{2}U_{1}B_{1})A_{11}^{\dag}+A_{1}W_{4}+W_{5}R_{A_{11}},
\end{align*}
\begin{align*}
V_{2}=A_{22}^{\dag}C_{22}B_{22}^{\dag}+L_{A_{22}}T_{4}+T_{5}R_{B_{22}},
\end{align*}
\begin{align*}
U_{2}=(A_{4}L_{A_{3}})^{\dag}(C_{4}-A_{4}A_{3}^{\dag}C_{3}-A_{4}V_{2}B_{3})+T_{1}B_{4}+L_{(A_{4}L_{A_{3}})}T_{2},
\end{align*}
\begin{align*}
W_{1}=(I_{p_{1}},~0)[A_{33}^{\dag}(E_{1}-L_{B_{11}}W_{2}B_{2}+A_{3}T_{5}R_{B_{22}})-A_{33}^{\dag}Z_{7}B_{33}+L_{A_{33}}Z_{6}],
\end{align*}
\begin{align*}
T_{4}=(0,~I_{p_{2}})[A_{33}^{\dag}(E_{1}-L_{B_{11}}W_{2}B_{2}+A_{3}T_{5}R_{B_{22}})-A_{33}^{\dag}Z_{7}B_{33}+L_{A_{33}}Z_{6}],
\end{align*}
\begin{align*}
W_{3}=[R_{A_{33}}(E_{1}-L_{B_{11}}W_{2}B_{2}+A_{3}T_{5}R_{B_{22}})B_{33}^{\dag}
+A_{33}A_{33}^{\dag}Z_{7}+Z_{8}R_{B_{33}}]\begin{pmatrix}I_{p_{3}}\\0\end{pmatrix},
\end{align*}
\begin{align*}
T_{6}=[R_{A_{33}}(E_{1}-L_{B_{11}}W_{2}B_{2}+A_{3}T_{5}R_{B_{22}})B_{33}^{\dag}
+A_{33}A_{33}^{\dag}Z_{7}+Z_{8}R_{B_{33}}]\begin{pmatrix}0\\I_{p_{4}}\end{pmatrix},
\end{align*}
\begin{align*}
W_{2}=
A^{\dag}EB^{\dag}-A^{\dag}CM^{\dag}EB^{\dag}-A^{\dag}SC^{\dag
}EN^{\dag}DB^{\dag}-A^{\dag}SZ_{1}R_{N}DB^{\dag}+L_{A}Z_{2}+Z_{3}%
R_{B},
\end{align*}
\begin{align*}
T_{5}=M^{\dag}ED^{\dag}+S^{\dag}SC^{\dag}EN^{\dag}+L_{M}L_{S}Z_{4}%
+L_{M}Z_{1}R_{N}+Z_{5}R_{D},
\end{align*}
the remaining $W_{j},T_{j},Z_{j}$ are  arbitrary matrices over $\mathbb{H}$, $p_{1}$ and $p_{2}$ are the column numbers of $A_{2}$ and $A_{4}$, respectively, $p_{3}$ and $p_{4}$ are the row numbers of $B_{1}$ and $B_{3}$, respectively.

\end{theorem}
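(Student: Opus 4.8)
The plan is to follow the three-step scheme used for \thmref{theorem01}. First I would split \eqref{sys05} into the two subsystems
\[
\left\{\begin{array}{c}A_{1}X_{1}-X_{2}B_{1}=C_{1},\\ A_{2}X_{3}-X_{2}B_{2}=C_{2},\end{array}\right.
\qquad
\left\{\begin{array}{c}A_{3}X_{4}-X_{3}B_{3}=C_{3},\\ A_{4}X_{4}-X_{5}B_{4}=C_{4};\end{array}\right.
\]
the first is an instance of \eqref{bufensys02} and the second, with $X_{4}$ in the role of the repeated unknown, is an instance of \eqref{bufensys03}, and the two share only the variable $X_{3}$. Applying \lemref{lemma01} to the first subsystem gives the solvability conditions $R_{A_{2}}C_{2}L_{B_{2}}=0$, $D_{11}L_{C_{11}}=0$, $R_{B_{11}}D_{11}=0$, together with the parametrized solution in which $X_{3}=A_{2}^{\dag}C_{2}+U_{1}B_{2}+L_{A_{2}}W_{1}$ and $X_{1},X_{2},U_{1},V_{1}$ are as displayed; applying \lemref{lemma05} to the second subsystem gives $R_{A_{3}}C_{3}L_{B_{3}}=0$, $R_{A_{22}}C_{22}=0$, $C_{22}L_{B_{22}}=0$, together with the alternative expression $X_{3}=-R_{A_{3}}C_{3}B_{3}^{\dag}+A_{3}V_{2}+T_{6}R_{B_{3}}$ and the displayed $X_{4},X_{5},V_{2},U_{2}$. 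The auxiliary matrices $A_{11},\ldots,D_{11}$ and $A_{22},B_{22},C_{22}$ introduced before the theorem are exactly those produced by these two lemma applications, so no re-derivation is needed.

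The second step is to force the two expressions for $X_{3}$ to agree. Substituting $U_{1}=B_{11}^{\dag}D_{11}C_{11}^{\dag}+L_{B_{11}}W_{2}+W_{3}R_{C_{11}}$ and $V_{2}=A_{22}^{\dag}C_{22}B_{22}^{\dag}+L_{A_{22}}T_{4}+T_{5}R_{B_{22}}$ and moving the non-free terms to the right, the equality $A_{2}^{\dag}C_{2}+U_{1}B_{2}+L_{A_{2}}W_{1}=-R_{A_{3}}C_{3}B_{3}^{\dag}+A_{3}V_{2}+T_{6}R_{B_{3}}$ collapses to
\[
A_{33}\begin{pmatrix}W_{1}\\T_{4}\end{pmatrix}+(W_{3},~T_{6})B_{33}+L_{B_{11}}W_{2}B_{2}-A_{3}T_{5}R_{B_{22}}=E_{1},
\]
with $A_{33},B_{33},E_{1}$ precisely as set up in the preamble. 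This is an instance of \eqref{equ28} under the dictionary $A_{1}\!\leftrightarrow\!A_{33}$, $B_{1}\!\leftrightarrow\!B_{33}$, $C_{3}\!\leftrightarrow\!L_{B_{11}}$, $D_{3}\!\leftrightarrow\!B_{2}$, $C_{4}\!\leftrightarrow\!-A_{3}$, $D_{4}\!\leftrightarrow\!R_{B_{22}}$, which explains the definitions of $A,B,C,D,E,M,N,S$. \lemref{lemma03} then yields the last four solvability conditions $R_{M}R_{A}E=0$, $EL_{B}L_{N}=0$, $R_{A}EL_{D}=0$, $R_{C}EL_{B}=0$ and explicit formulas for $W_{1},T_{4},W_{3},T_{6},W_{2},T_{5}$; substituting these back into the two subsystem solutions assembles the general solution stated in the theorem. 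Collecting the three groups of projector conditions proves $(1)\Leftrightarrow(3)$.

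It remains to establish $(2)\Leftrightarrow(3)$. Here the first rank equality \eqref{equhh071} for $i=1,2$ together with the second rank equality of $(2)$ (the $2\times4$ block in $C_{1},C_{2}$) are the rank form of the \lemref{lemma01} conditions (statement $(3)$ of that lemma), while \eqref{equhh071} for $i=3,4$ together with the third rank equality of $(2)$ (the column-stacked block in $C_{3},C_{4}$) are the rank form of the \lemref{lemma05} conditions, so these six need no further work. For the remaining four equalities I would convert each of $R_{M}R_{A}E=0$, $EL_{B}L_{N}=0$, $R_{A}EL_{D}=0$, $R_{C}EL_{B}=0$ into a rank identity by repeated use of \lemref{lemma04}, exactly as in the chain of equivalences closing the proof of \thmref{theorem01}: first replace $M=R_{A}C$, $N=DL_{B}$, $S=CL_{M}$, then substitute $A=R_{A_{33}}L_{B_{11}}$, $B=B_{2}L_{B_{33}}$, $C=-R_{A_{33}}A_{3}$, $D=R_{B_{22}}L_{B_{33}}$, $E=R_{A_{33}}E_{1}L_{B_{33}}$, absorb the projectors $R_{A_{33}},L_{B_{33}},L_{B_{11}},L_{A_{22}},R_{C_{11}},R_{B_{22}}$ via parts $(1)$--$(2)$ of \lemref{lemma04}, and finally insert the definitions of $A_{11},\ldots,D_{11}$, $A_{22},B_{22},C_{22}$ and $E_{1}$ to clear all Moore--Penrose inverses, obtaining \eqref{equhh077} from $R_{M}R_{A}E=0$ and the three intermediate equalities of $(2)$ from the other three conditions. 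The genuinely laborious part is precisely this last item: keeping the expanding block matrix manageable while the projectors are stripped off one at a time. However, the manipulations are structurally identical to those already carried out in detail for \thmref{theorem01}, so I expect them to go through verbatim. A direct substitution check into \eqref{sys05}, as illustrated by the numerical example following the theorem, then completes the argument.
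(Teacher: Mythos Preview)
Your proposal is correct and follows essentially the same approach as the paper: the paper's own proof simply states that one splits \eqref{sys05} into the two subsystems you wrote, applies \lemref{lemma01} and \lemref{lemma05} to them respectively, then uses \lemref{lemma03} and \lemref{lemma04} along the lines of \thmref{theorem01}. Your write-up in fact supplies considerably more detail than the paper does, including the explicit four-term equation and the dictionary identifying it as an instance of \eqref{equ28}.
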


\begin{proof}
We separate this system of one-sided coupled Sylvester-type real quaternion matrix equations (\ref{sys05}) into two parts
\begin{align}
  \left\{\begin{array}{c}
A_{1}X_{1}-X_{2}B_{1}=C_{1},\\
A_{2}X_{3}-X_{2}B_{2}=C_{2},
\end{array}
  \right.
\end{align}
and
\begin{align}
  \left\{\begin{array}{c}
A_{3}X_{4}-X_{3}B_{3}=C_{3},\\
A_{4}X_{4}-X_{5}B_{4}=C_{4}.
\end{array}
  \right.
\end{align}
Applying the main idea of Theorem \ref{theorem01}, Lemma \ref{lemma01}, Lemma \ref{lemma05}, Lemma \ref{lemma03} and Lemma \ref{lemma04}, we can prove Theorem \ref{theorem71}.
\end{proof}

Now we give an example to illustrate Theorem \ref{theorem71}.

\begin{example}
Given the quaternion matrices:
\begin{align*}
A_{1}=\begin{pmatrix}\mathbf{i}+\mathbf{j}&-\mathbf{j}&\mathbf{i}+\mathbf{k}\\
\mathbf{k}&1+\mathbf{k}&0\\
1&0&1+\mathbf{j}\end{pmatrix}, ~
B_{1}=\begin{pmatrix}1+\mathbf{j}+\mathbf{k}&-1-\mathbf{k}&\mathbf{i}+\mathbf{j}\\
2\mathbf{k}&1&1+\mathbf{i}+\mathbf{j}\\
2&2+\mathbf{i}+\mathbf{j}&\mathbf{k}\end{pmatrix},
\end{align*}
\begin{align*}
A_{2}=\begin{pmatrix}1&1+\mathbf{i}+\mathbf{j}&2+2\mathbf{i}+\mathbf{k}\\
1-2\mathbf{i}+\mathbf{k}&\mathbf{j}&1\\
\mathbf{i}&1&\mathbf{j}\end{pmatrix},~
B_{2}=\begin{pmatrix}1&-1+\mathbf{j}&\mathbf{i}+\mathbf{k}\\
\mathbf{i}&-\mathbf{i}-\mathbf{j}&-1-\mathbf{k}\\
1+\mathbf{i}&-1-\mathbf{i}&-1+\mathbf{i}\end{pmatrix},
\end{align*}
\begin{align*}
A_{3}=\begin{pmatrix}\mathbf{j}&\mathbf{i}-\mathbf{j}&1+\mathbf{k}\\
1+\mathbf{k}&0&\mathbf{i}+\mathbf{j}\\
1+\mathbf{j}+\mathbf{k}&\mathbf{i}-\mathbf{j}&1+\mathbf{i}+\mathbf{j}+\mathbf{k}\end{pmatrix},
~
B_{3}=\begin{pmatrix}\mathbf{j}+2\mathbf{k}&1+\mathbf{j}-\mathbf{k}&\mathbf{i}+\mathbf{j}\\
-\mathbf{j}-2\mathbf{k}&-1-\mathbf{j}+\mathbf{k}&-\mathbf{i}-\mathbf{j}\\
2\mathbf{j}+4\mathbf{k}&2\mathbf{j}-2\mathbf{k}&2\mathbf{j}\end{pmatrix},
\end{align*}
\begin{align*}
A_{4}=\begin{pmatrix}-\mathbf{k}&\mathbf{i}+\mathbf{j}+\mathbf{k}&2\mathbf{i}-2\mathbf{k}\\
1+\mathbf{k}&1-\mathbf{j}-\mathbf{k}&1+2\mathbf{k}\\
1&1+\mathbf{i}&1+2\mathbf{i}\end{pmatrix},
~
B_{4}=\begin{pmatrix}1-\mathbf{j}&\mathbf{i}-\mathbf{k}&-\mathbf{i}-\mathbf{k}\\
\mathbf{i}+\mathbf{j}&-1+\mathbf{k}&1+\mathbf{k}\\
1+\mathbf{i}+2\mathbf{j}&-1+\mathbf{i}+2\mathbf{k}&1-\mathbf{i}+2\mathbf{k}\end{pmatrix},
\end{align*}
\begin{align*}
C_{1}=\begin{pmatrix}-3-\mathbf{j}-4\mathbf{k}&-1+2\mathbf{i}+3\mathbf{k}&1-\mathbf{j}\\
2-\mathbf{i}+\mathbf{j}-3\mathbf{k}&-1+\mathbf{i}-\mathbf{k}&2-\mathbf{i}+\mathbf{k}\\
4+\mathbf{i}-3\mathbf{k}&1-2\mathbf{i}+\mathbf{j}&-2-2\mathbf{i}-\mathbf{j}-\mathbf{k}\end{pmatrix},
\end{align*}
\begin{align*}
C_{2}=\begin{pmatrix}-2-2\mathbf{i}-8\mathbf{j}+5\mathbf{k}&11\mathbf{j}+7\mathbf{k}&2+\mathbf{i}-8\mathbf{j}+5\mathbf{k}\\
-1-2\mathbf{i}-2\mathbf{j}+4\mathbf{k}&-1+2\mathbf{i}+9\mathbf{j}-\mathbf{k}&4+\mathbf{i}-2\mathbf{j}+7\mathbf{k}\\
1-2\mathbf{j}+\mathbf{k}&-3+\mathbf{i}+5\mathbf{j}-\mathbf{k}&2+2\mathbf{i}+\mathbf{j}+5\mathbf{k}\end{pmatrix},
\end{align*}
\begin{align*}
C_{3}=\begin{pmatrix}-4+9\mathbf{i}+4\mathbf{j}-4\mathbf{k}&-8-3\mathbf{i}+\mathbf{k}&-5+\mathbf{j}-6\mathbf{k}\\
8+5\mathbf{i}-\mathbf{j}+6\mathbf{k}&-2+4\mathbf{i}+6\mathbf{j}-2\mathbf{k}&-2+6\mathbf{i}+2\mathbf{k}\\
4+14\mathbf{i}+3\mathbf{j}+2\mathbf{k}&-10+\mathbf{i}+6\mathbf{j}+\mathbf{k}&-7+6\mathbf{i}+\mathbf{j}-4\mathbf{k}\end{pmatrix},
\end{align*}
\begin{align*}
C_{4}=\begin{pmatrix}-3+2\mathbf{i}-3\mathbf{j}-6\mathbf{k}&2+4\mathbf{i}+\mathbf{j}-3\mathbf{k}&-4\mathbf{i}-\mathbf{j}-\mathbf{k}\\
-3-4\mathbf{i}-7\mathbf{j}+5\mathbf{k}&7-5\mathbf{i}-2\mathbf{j}-2\mathbf{k}&-10+8\mathbf{i}+3\mathbf{j}-9\mathbf{k}\\
-4+\mathbf{i}-3\mathbf{j}+3\mathbf{k}&4+3\mathbf{i}-3\mathbf{j}&-7+2\mathbf{i}-2\mathbf{j}-3\mathbf{k}\end{pmatrix}.
\end{align*}
Now we consider the system of one-sided coupled Sylvester-type real quaternion matrix equations (\ref{sys05}). Check that
\begin{align*}
r\begin{pmatrix}C_{i}&A_{i}\\B_{i}&0\end{pmatrix}=r(A_{i})+r(B_{i})=
\left\{\begin{array}{lll}
6, &\mbox{if}~
 i=1\\
5, & \mbox{if}~
 i=2\\
 4, & \mbox{if}~
 i=3,4
\end{array}
 \right.
\end{align*}
\begin{align*}
r\begin{pmatrix}C_{1}&C_{2}&A_{1}&A_{2}\\B_{1}&B_{2}&0&0\end{pmatrix}=r(A_{1},~A_{2})+r(B_{1},~B_{2})=6,
\end{align*}
\begin{align*}
r\begin{pmatrix}C_{3}&A_{3}\\C_{4}&A_{4}\\B_{3}&0\\B_{4}&0\end{pmatrix}
=r\begin{pmatrix}A_{3}\\A_{4}\end{pmatrix}+r\begin{pmatrix}B_{3}\\B_{4}\end{pmatrix}=6,
\end{align*}
\begin{align*}
r\begin{pmatrix}C_{1}&A_{2}C_{3}+C_{2}B_{3}&A_{1}&A_{2}A_{3}\\B_{1}&B_{2}B_{3}&0&0\end{pmatrix}=
r(A_{1},~A_{2}A_{3})+r(B_{1},~B_{2}B_{3})=6,
\end{align*}
\begin{align*}
r\begin{pmatrix}A_{2}C_{3}+C_{2}B_{3}&A_{2}A_{3}\\C_{4}&A_{4}\\B_{2}B_{3}&0\\B_{4}&0\end{pmatrix}=
r\begin{pmatrix}A_{2}A_{3}\\A_{4}\end{pmatrix}+r\begin{pmatrix}B_{2}B_{3}\\B_{4}\end{pmatrix}=6,
\end{align*}
\begin{align*}
r\begin{pmatrix}A_{2}C_{3}+C_{2}B_{3}&A_{2}A_{3}\\B_{2}B_{3}&0\end{pmatrix}=
r(A_{2}A_{3})+r(B_{2}B_{3})=10,
\end{align*}
\begin{align*}
r\begin{pmatrix}C_{1}&A_{2}C_{3}+C_{2}B_{3}&A_{1}&A_{2}A_{3}\\0&C_{4}&0&A_{4}\\B_{1}&B_{2}B_{3}&0&0\\0&B_{4}&0&0\end{pmatrix}=
r\begin{pmatrix}A_{1}&A_{2}A_{3}\\0&A_{4}\end{pmatrix}+r\begin{pmatrix}B_{1}&B_{2}B_{3}\\0&B_{4}\end{pmatrix}=4.
\end{align*}
All the rank equalities in (\ref{equhh071})-(\ref{equhh077}) hold. Hence, the system of one-sided coupled Sylvester-type real quaternion matrix equations (\ref{sys05}) is consistent. Note that
\begin{align*}
X_{1}=\begin{pmatrix}1+\mathbf{i}+\mathbf{j}&1-2\mathbf{i}+2\mathbf{j}&\mathbf{k}\\
1&\mathbf{i}+\mathbf{j}&2\\
\mathbf{i}+\mathbf{k}&1+2\mathbf{k}&-\mathbf{k}\end{pmatrix}
~
X_{2}=\begin{pmatrix}\mathbf{j}&1-\mathbf{j}&\mathbf{i}+\mathbf{k}\\
\mathbf{i}+\mathbf{j}&1-\mathbf{j}&\mathbf{k}\\
-1&2+\mathbf{k}&\mathbf{i}+\mathbf{j}\end{pmatrix},
\end{align*}
\begin{align*}
X_{3}=\begin{pmatrix}-1-\mathbf{j}+\mathbf{k}&-1+\mathbf{j}+\mathbf{k}&\mathbf{i}-\mathbf{j}\\
-1+\mathbf{j}+\mathbf{k}&1+\mathbf{j}-\mathbf{k}&-\mathbf{i}+\mathbf{k}\\
2\mathbf{k}&2\mathbf{j}&-\mathbf{j}+\mathbf{k}\end{pmatrix}
~
X_{4}=\begin{pmatrix}1&1+\mathbf{j}&\mathbf{i}+\mathbf{k}\\
\mathbf{i}&\mathbf{i}-\mathbf{j}&-1-\mathbf{k}\\
1+\mathbf{i}&1+\mathbf{i}&-1+\mathbf{i}\end{pmatrix},
\end{align*}and
\begin{align*}
X_{5}=\begin{pmatrix}1+\mathbf{j}&1+\mathbf{j}&\mathbf{i}+\mathbf{k}\\
1+\mathbf{i}+\mathbf{k}&2-\mathbf{i}+\mathbf{k}&3\\
1+3\mathbf{i}+\mathbf{j}&\mathbf{k}&1\end{pmatrix}
\end{align*}
satisfy the system (\ref{sys05}).

\end{example}

\section{Conclusion}

We have provided some necessary and sufficient conditions for the existence and the
general solutions to the systems of four coupled one sided Sylvester-type real quaternion matrix equations (\ref{sys01})-(\ref{sys05}), respectively. Moreover, we have presented some numerical examples. It is worthy to say that the main results of
this paper can be generalized to an arbitrary division ring with an
involutive antiautomorphism.

\end{document}